\documentclass[11pt,oneside,leqno]{amsart}
\usepackage[utf8]{inputenc}
\usepackage[english]{babel}
\usepackage{amssymb,amsmath,latexsym,amsthm,amsfonts,bbm,dsfont}
\usepackage{geometry}
\usepackage{float}
\usepackage[colorlinks=true,citecolor=red,linkcolor=blue,pdfpagetransition=Blinds]{hyperref}
\numberwithin{equation}{section}
\usepackage{mathtools, stmaryrd}
\usepackage{natbib}
\usepackage{textgreek}
\usepackage{xcolor}
\numberwithin{equation}{section}
\usepackage{xcolor}
\usepackage{graphicx}

\newcommand{\E}{\mathbb{E}}

\newtheorem{theorem}{Theorem}[section]

\newtheorem{lemma}[theorem]{Lemma}

\newtheorem{remark}[theorem]{Remark}

\begin{document}

\title{Controllability for semi-discrete semilinear stochastic parabolic operators }

\author[R. Lecaros]{Rodrigo Lecaros}
\address[R. Lecaros]{Departamento de Matem\'atica, Universidad T\'ecnica Federico Santa Mar\'ia,  Santiago, Chile.}
\email{rodrigo.lecaros@usm.cl}
\author[A. A. P\'erez]{Ariel A. P\'erez}
\address[A. A. P\'erez]{Departamento de Matem\'atica, Universidad del B\'io-B\'io, Concepci\'on, Chile.}
\email{aaperez@ubiobio.cl}
\author[M. F. Prado]{Manuel F. Prado}
\address[M. F. Prado]{(Corresponding Author)Departamento de Matem\'atica, Universidad T\'ecnica Federico Santa Mar\'ia,  Santiago, Chile.}
\email{mprado@usm.cl}

\subjclass[2020]{93E03, 93B05, 35R60, 60H15, 93C20, }
\keywords{Controllability, Observability,
global Carleman estimate, semi-discrete stochastic parabolic equations.}

\begin{abstract}
In \cite{LPP:2025}, it was shown that, in arbitrary dimension, the spatial semi-discretization of a controlled stochastic parabolic operator is generically not null-controllable. Nevertheless, $\phi$-null controllability results remain attainable. The present paper extends those results to semi-discrete semilinear stochastic operators in arbitrary dimension, whose nonlinearities may also depend on the first-order spatial derivatives. The approach relies on establishing a new Carleman estimate for the adjoint backward stochastic parabolic operator, which yields $\phi$-null controllability for the associated linear system via a duality argument. The semilinear case is handeld by means of a fixed-point argument. As particular cases, our results recover the one-dimensional linear results of \cite{zhao:2024}, the multidimensional linear results of \cite{LPP:2025}, and the semilinear one-dimensional framework of \cite{WZ:2025} in the absence of gradient dependence.
\end{abstract}

\maketitle

\section{Introduction}
In the deterministic setting, the effect of spatial discretization on controllability has been studied extensively. As established in \cite{zuazua2005propagation}, discretization and controllability do not, in general, commute: even when the continuous system is null-controllable, its semi-discrete approximation may fail to retain this property. To address this obstruction, the notion of $\phi$-null controllability was considered \cite{BHLR:2010a,BHLR:2010b,BLR:2014,LT:2006}. This weaker notion consists in constructing uniformly bounded controls such that the norm of the discrete solution at a fixed time $T$ decays at a prescribed rate $\phi(h)$, where $h>0$ denotes the spatial mesh size and $\phi(h)\longrightarrow 0$ as $h\longrightarrow 0$. This framework has been developed in a broad range of contexts, including semi-discrete spatial approximations  \cite{AB:2020,ABM:2018,BHSDT:2019,CLTP-2022,Thuy:2014,Thuy:2015}, fully discrete schemes \cite{BHLR:2011,LMPZ-2023,GC-HS-2021,AA:perez:2024}, and time-discrete settings \cite{HS2023,BDK:2026}. General expositions of this controllability notion can be found in \cite{B:2013,Thuy:2015,TZ-2009}.

Analogous difficulties arise in the stochastic setting. In \cite{LPP:2025}, it is shown that spatial semi-discretizations of controlled stochastic parabolic equations are, in general, not null-controllable in dimension $n\geq 2$. This negative result motivates the study of $\phi$-null controllability for semi-discrete stochastic systems. Several contributions in this direction have been obtained in the one-dimensional case: the linear setting is addressed in \cite{zhao:2024,LPP:2025}, the semilinear setting without gradient dependence in \cite{WZ:2025}, and the fourth-order linear case in \cite{WZ:2024}. All these works establish $\phi$-null controllability results but leave open the question of whether the corresponding semi-discrete systems actually fail to be null-controllable. We note, by contrast, that in the deterministic one-dimensional setting with constant coefficients, null-controllability does hold at the discrete level; see \cite{lopez1998some}. The analogous question for variable coefficients remains open.

 The objective of the present paper is to generalize the semilinear one-dimensional results of \cite{WZ:2025} to arbitrary spatial dimensions, and to extend the linear multidimensional results of \cite{LPP:2025} to a semilinear framework. Recall that \cite{LPP:2025} itself extended the one-dimensional linear results of \cite{zhao:2024} to arbitrary spatial dimensions, under weaker assumptions on the diffusion coefficient and for a broader class of semi-discrete operators. The present work builds upon the methodology of \cite{LPP:2025} and its main novelty is the proof of $\phi$-null controllability for semi-discrete semilinear stochastic parabolic operators in arbitrary spatial dimension, under globally Lipschitz nonlinearities depending on both the state and its discrete spatial gradient. The approach proceeds in two steps: a variational argument combined with a new semi-discrete Carleman estimate for the adjoint backward stochastic parabolic operator yields $\phi$-null controllability for the associated linear system; the semilinear case is then handled via a fixed-point argument. As a consequence, our results recover, as particular cases, the one-dimensional linear results of \cite{zhao:2024}, the multidimensional linear results of \cite{LPP:2025}, and the semilinear one-dimensional framework of \cite{WZ:2025} in the absence of gradient dependence.
\subsection{Notation and assumptions}
Let $(\Omega, \mathcal{F}, \{\mathcal{F}_{t}\}_{t \geq 0}, \mathds{P})$ be a complete filtered probability space on which a one-dimensional standard Brownian motion $\{B(t)\}_{t \geq 0}$ is defined. We assume that $\{\mathcal{F}_{t}\}_{t \geq 0}$ is the natural filtration generated by $B(\cdot)$, augmented by all $\mathds{P}$-null sets in $\mathcal{F}$, and we denote by $\mathds{F}$ the progressive $\sigma$-field with respect to $\{\mathcal{F}_{t}\}_{t \geq 0}$.

Let $H$ be a Banach space. We denote by $C([0,T];H)$ the Banach space of all strongly continuous $H$-valued functions on $[0,T]$. We further introduce the following function spaces:      $L^2_{\mathcal{F}_t}(\Omega;H)$ denotes the space of all $\mathcal{F}_t$-measurable random variables $\zeta$ with $\mathbb{E}|\zeta|_{H}^2 < \infty$; $L^2_{\mathds{F}}(0,T;H)$ denotes the Banach space consisting of all $H-$valued $\mathds{F}$-adapted processes $X(\cdot)$ such that $\E(|X|_{L^2(0,T;H)}|)<\infty$, endowed with the canonical norm; $L^\infty_{\mathds{F}}(0,T;H)$ denotes the Banach space consisting of all $H$-valued $\mathds{F}$-adapted essentially bounded processes; and $L^2_{\mathds{F}}(\Omega;C(0,T;H))$ denotes the Banach space of all $H$-valued $\mathds{F}$-adapted continuous processes $X$ satisfying $\mathbb{E}(|X|^2_{C([0,T];H)}) < \infty$, endowed with the canonical norm. More generally, one defines $L_{\mathds{F}}^2(\Omega;C^{m}([0,T];H))$ analogously for any positive integer $m$.

Let $n \geq 2$, $N \in \mathbb{N}$, and $T > 0$ be fixed. Consider the domain 
$G := (0,1)^n$, and let $G_0\subset G$ be a non-empty open subset. The mesh size is defined by $h := 1/(N+1)$. The one-dimensional  grid on $(0,1)$ is then given by $\mathcal{K} :=\{ x_i = i h \mid i = 1,\dots,N \}$, and the regular partition of $G$ is $\mathcal{M} := G \cap \mathcal{K}^n$, with $\mathcal{M}_{0}:=G_{0}\cap\mathcal{K}^{n}$. Now, we define the dual mesh in the direction $e_{i}$ by 
\begin{equation*}
    \mathcal{M}_{i}^{\ast} := \left\{ x + \frac{h}{2} e_{i} \mid x \in \mathcal{M} \right\}\cup\left\{ x - \frac{h}{2} e_{i} \mid x \in \mathcal{M} \right\},
\end{equation*}where $\{e_{i}\}_{i=1}^{n}$ denotes the canonical basis of $\mathbb{R}^{n}$. The mesh obtained by applying the dual operation successively in directions $e_{i}$ and $e_{j}$ is denoted by $\overline{\mathcal{M}}_{ij} := (\mathcal{M}_{i}^{\ast})_{j}^{\ast}$. 
In addition, we define the boundary of the set $\mathcal{M}$ in direction $e_{i}$ by $\partial_{i}\mathcal{M}:=\overline{\mathcal{M}_{ii}}\setminus\mathcal{M}$. Thus, the boundary and closure of a set $\mathcal{M}$ is given by $\displaystyle
       \partial\mathcal{M} := \bigcup_{i=1}^{n} \partial_{i}\mathcal{M}$ and $\overline{\mathcal{M}} := \mathcal{M} \cup \partial \mathcal{M}$.
We denote by $C(\mathcal{M})$ the set of real-valued functions defined on the mesh $\mathcal{M}$. We define the average and the difference operators as the operators from $C(\overline{\mathcal{M}})$ to $C(\mathcal{M}_{i}^{\ast})$:
\begin{equation*}
\begin{split}
    A_{i}u(x) &:= \frac{1}{2}(u(x+\frac{h}{2} e_{i}) + u(x-\frac{h}{2} e_{i}) ); \,
    D_{i}u(x) := \frac{1}{h}( u(x+\frac{h}{2} e_{i}) - u(x-\frac{h}{2} e_{i})).
\end{split}
\end{equation*} 
Then, for a fixed $h$, we define $L^{2}_{h}(\mathcal{M})$-norm by
$\displaystyle \|u\|^{2}_{L^{2}_{h}(\mathcal{M})}:=h^{n}\sum_{x\in\mathcal{M}}|u(x)|^{2}$. Similarly, we define the norm $H^{1}(\mathcal{M})$-norm by $\displaystyle \|u\|^{2}_{H^{1}(\mathcal{M})}=\|u\|^{2}_{L^{2}_{h}(\mathcal{M})}+\sum_{i=1}^{n}\|D_{i}u\|^{2}_{L^{2}_{h}(\mathcal{M}^{\ast}_{i})}$. Here and throughout, $C$ denotes a generic constant, which may change from line to line, but independent of $h$.\\
In this work, using the previous notation, we consider a semi-discrete semilinear stochastic parabolic system given by
 \begin{equation}\label{eq:systemnonlinear}
    \left\{\begin{aligned}
\mathcal{P}y=&(F_1(\omega,t,x,y,\nabla_{h}y)+\mathbbm{1}_{\mathcal{M}_0}u)\,dt+(F_{2}(\omega,t,x,y,\nabla_{h}y)+U)\,dB(t)\,\text{in} \,Q, \\
                y=&0\quad \text{on}\quad\partial Q,\quad \left.y\right|_{t=0}=y_{0}\quad\text{in} \quad\mathcal{M},
    \end{aligned}\right.
    \end{equation}
where  $\mathcal{P}y:= dy-\sum_{i=1}^{n}D_i(\gamma_iD_iy)\,dt$, $(\nabla_{h}y)_i:=A_{i}D_{i}y$ is the $i$-th component of the discrete gradient $\nabla_{h}y\in\mathbb{R}^{n}$, $Q:=(0,T)\times\mathcal{M}$, $\partial Q:=(0,T)\times\partial\mathcal{M}$. System \eqref{eq:systemnonlinear} corresponds to a spatial semi-discretization of the system (1.11) in \cite{ZXL:2025}, where the authors extend the existence of null-controllability results to a more general class of nonlinearity in the continuous setting. \\
   The hypotheses considered throughout this work are the following:
\begin{itemize}
    \item[(A1)] For each $i=1,\ldots, n$, each coefficient  $\gamma_i$ is a positive time-independent function satisfying the following condition: There exists a constant $\gamma_{0}>0$ such that
    $$\text{reg}(\gamma):=
 \operatorname*{ess\,sup}_{\substack{x \in G \\ i = 1,\dots,n}} \left( \gamma_i + \frac{1}{\gamma_i} +|\nabla_{x} \gamma_i|^2 \right) \leq \gamma_{0}.$$
\item[(A2)] The nonlinearities $F_{1}$ and $F_{2}$ satisfy the following conditions:
\begin{itemize}
    \item For each $y\in H_{0}^1(\mathcal{M})$, the processes $F_i(\cdot,\cdot,\cdot,y,\nabla_{h}y)$, $i=1,2$, are $\mathbb{F}-$adapted and $L_{h}^{2}$-valued stochastic processes.
    \item For all $(\omega,t,x)\in \Omega\times (0,T)\times\mathcal{M}$, $F_i(\omega,t,x,0,0)=0$ for $i=1,2$.
    \item There exist constants $L_{i}>0$, $i=1,2$, such that
    \begin{equation*}
        |F_i(\omega,t,x,a_1,b_{1})-F_i(\omega,t,x,a_{2},b_{2})|\leq L_{i}(|a_{1}-a_{2}|+|b_{1}-b_{2}|),\quad i=1,2,
    \end{equation*}
    for all $(\omega,t,x)\in\Omega\times(0,T)\times\mathcal{M}$ and $ (a_{1},b_{1}),(a_{2},b_{2})\in \mathbb{R}\times\mathbb{R}^n$.
\end{itemize}
\end{itemize}

Since, as is shown in \cite{LPP:2025}, null-controllability fails for the linear spatial semi-discretization of a stochastic parabolic equation, we pursue the notion of $\phi$-null controllability for the system \eqref{system:nonlinear}. This consists in constructing a pair of controls $({u},{U})$, uniformly bounded in $h$, such that the norm of the solution at time $T$ is bounded by a function $\phi$ that tends to zero when $h\to 0$. More precisely, we seek controls such that 
\begin{equation*}
    \E\int_{\mathcal{M}}|y(T)|^{2}\leq C\phi(h)\E\int_{\mathcal{M}}|y_{0}|^{2}.
\end{equation*}
A key novelty of the present work, is the incorporation of the discrete gradient $\nabla_{h}y$ as an argument of the nonlinearities $F_{1}$ and $F_{2}$. 
\subsection{Main results}\label{sec:main} 
The primary objective of this work is to analyze the $\phi$-null controllability of semi-discrete semilinear forward parabolic SPDEs  \eqref{eq:systemnonlinear}. To this end, we first introduce the weight functions according to \cite{fursikov-1996}. For a nonempty subset $\mathcal{M}'$ of $G$ such that $\overline{\mathcal{M}'}\subset\mathcal{M}_{0}$, there exists a function $\psi\in C^4(\overline{G};[0,1])$ such that
\begin{equation}\label{assumtion:psi}
 0< \psi(x)\leq 1 \,\mbox{in}\, G,\quad \psi(x)=0\,\mbox{on}\;\,\partial G\quad \mbox{and}\quad\inf_{x\in G\setminus \overline{G}_1}|\nabla \psi(x)|\geq \alpha>0. 
\end{equation}    
For $\lambda>1$ and $m\geq 1$, we define the function
\begin{align}\label{funcion-peso-2}
    \varphi(x)&=\xi(x)-\lambda e^{6\lambda(m+1) },
\end{align}
with $\xi(x)=e^{\lambda(\psi(x)+6m)}$ and for $0<\delta < 1/2$, we define $\theta\in C^2([0,T])$ by
\begin{equation}\label{eq:temporalWeightfunctional}
    \theta(t)=\begin{cases}
        1+\left(1-4T^{-1}t\right)^{\sigma} &\,t\in [0,T/4],\\
        1&t\in [T/4,T/2],\\
        \mbox{is increasing}& t\in [T/2,3T/4],\\
        [T-t+\delta T]^{-m} & t\in [3T/4,T].
    \end{cases}
\end{equation}
where $\sigma$ is defined as
\begin{equation*}
    \sigma=\tau\lambda^2e^{\lambda(6m-4)}>2,\quad\mbox{for all}\, \lambda\geq 1.
\end{equation*}
Given $\tau\geq 1$, we set $s(t)=\tau\theta(t)$, $r := e^{s\varphi}$ and $\rho = r^{-1}$.  
\begin{remark}
    The main difference between \eqref{eq:temporalWeightfunctional} and  continuous setting \cite{ZXL:2025,HS:LB:P-2023} is that we avoid the blow up in $t=T$ by using the parameter $\delta\in (0,1/2)$, because $\theta$ must be bounded to obtain asymptotic behavior of the weight functions $r$ and $\rho$ when the discrete operators are applied.  
\end{remark}
Our main result, known as $\phi$-null controllability, holds for any function $\phi:(0,\infty)\rightarrow (0,\infty)$ satisfying $\lim_{h\rightarrow 0}\phi(h)=0$ and 
\begin{equation}\label{condphi}\liminf_{h\rightarrow 0}\phi(h)/e^{-\kappa h^{-1}}>0.
\end{equation}
\begin{theorem}\label{theo:mainresult} Suppose that assumptions $(A_{1})$ and $(A_{2})$ hold. Then there exist constants $\kappa>0$, $C_{0}>0$, and $h_{0}>0$ depending on $G_{0}$ , $T$, $\gamma_{0}$ $L_{1}$, $L_{2}$ but independent of $h$ and $y_{0}$, such that for every $h\in(0,h_{0})$ and every initial condition $y_{0}\in L^{2}_{\mathcal{F}_{0}}(\Omega;L^{2}_{h}(\mathcal{M}))$, there exists a control pair 
\begin{equation*}
    (u,U)\in L^{2}_{\mathbb{F}}(0,T;L_{h}^{2}(\mathcal{M}_{0}))\times L_{\mathbb{F}}^{2}(0,T;L_{h}^{2}(\mathcal{M}))
\end{equation*}
such that the solution $y$ of system \eqref{eq:systemnonlinear} satisfies
\begin{equation*}
\begin{split}
        \E\int_{\mathcal{M}}|y(T)|^2\leq  C\phi(h)\E\int_{\mathcal{M}}|y_{0}|^{2},
\end{split}
\end{equation*}
and
\begin{equation*}
\begin{aligned}
\E\int_{0}^T\int_{\mathcal{M}_0}s^{-3}\lambda^{-4}\xi^{-3}\rho^{2}|u|^2\,dt
+&\E\int_{Q}s^{-2}\lambda^{-2}\xi^{-3}\rho^2|U|^2\,dt\\
&\leq C\left(\E\int_{\mathcal{M}}\tau^{-1}\lambda^{-2}e^{-2\lambda(6m+1)}e^{-4\tau\varphi}|y_0|^2\right),
\end{aligned}
\end{equation*}
where $\phi(h):=Ce^{-\kappa /h}$.
\end{theorem}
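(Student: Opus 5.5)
The proof proceeds in two stages, following the strategy announced in the introduction: first a weighted $\phi$-null controllability result for a linear system with bounded potentials, then a fixed-point argument for the semilinear problem.

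\textbf{Linearization.} For a fixed process $z\in L^2_{\mathbb{F}}(0,T;H^1_0(\mathcal{M}))$ we write
\[
F_j(\omega,t,x,z,\nabla_h z)=a_j z+b_j\cdot\nabla_h z,
\]
with coefficients obtained by difference quotients, as is standard in this kind of argument. By (A2) these satisfy $|a_j|+|b_j|\le L_j$, uniformly in $z$. This yields a linear forward system with bounded potentials $a_1,b_1$ in the drift and $a_2,b_2$ in the diffusion.

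\textbf{Carleman estimate for the adjoint.} The core step is a semi-discrete Carleman estimate for the backward operator
\[
dq+\sum_i D_i(\gamma_i D_i q)\,dt=f\,dt+Q\,dB,
\]
with the weights $r=e^{s\varphi}$ and $s=\tau\theta$ defined above. It should take the form
\[
\mathbb{E}\!\int_Q s^3\lambda^4\xi^3 r^2|q|^2+s\lambda^2\xi\, r^2|\nabla_h q|^2
\le C\Big(\mathbb{E}\!\int_0^T\!\!\int_{\mathcal{M}_0} s^3\lambda^4\xi^3 r^2|q|^2+\mathbb{E}\!\int_Q r^2|f|^2+s^2\lambda^2\xi^2 r^2|Q|^2\Big)+h^{-2}\text{-type boundary terms at }t=T.
\]
It is valid for $\tau h(\delta T)^{-m}\le\varepsilon_0$. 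I would derive it by adapting the multidimensional computation of \cite{LPP:2025}: conjugate with $r$, split into symmetric and antisymmetric parts, and control the discrete commutator remainders through the asymptotic expansions of $r\,D_i\rho$ and $r\,A_i\rho$. These expansions are available only because $\theta$ stays bounded thanks to $\delta$. The additional $\nabla_h$ weight on the left is needed to absorb the first-order terms $b_j$, which appear in the adjoint as $D$- and $A$-derivatives of $b_j q$.

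\textbf{Linear controllability by duality.} Next I would minimize the penalized functional
\[
J(u,U)=\tfrac12\,\mathbb{E}\!\int_0^T\!\!\int_{\mathcal{M}_0} s^{-3}\lambda^{-4}\xi^{-3}\rho^2|u|^2+\tfrac12\,\mathbb{E}\!\int_Q s^{-2}\lambda^{-2}\xi^{-3}\rho^2|U|^2+\tfrac1{2\epsilon}\,\mathbb{E}|y(T)|^2 .
\]
Its optimality system has adjoint $q$, with $u=s^3\lambda^4\xi^3r^2q\,\mathbbm{1}_{\mathcal{M}_0}$ and $U$ built from $Q$. Taking It\^o's formula for $\langle y,q\rangle$ and applying the Carleman estimate gives bounds uniform in $\epsilon$. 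The terminal term carries the factor $e^{-2\tau\varphi(T)}$ at $t=T$, while $\tau$ is taken of order $h^{-1}$ with $\delta$ fixed. This produces
\[
\mathbb{E}|y(T)|^2\le Ce^{-\kappa/h}\,\mathbb{E}|y_0|^2
\]
together with the stated weighted control bound. Letting $\epsilon\to0$ then yields the linear result.

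\textbf{Fixed point.} Define $\Lambda(z)=y$, the solution controlled as above, on a weighted space with norm
\[
\mathbb{E}\!\int_Q s^{-3}\lambda^{-4}\xi^{-3}\rho^2\big(|y|^2+|\nabla_h y|^2\big).
\]
Lipschitz dependence of $F_j$ and weighted energy estimates for the forward equation, obtained via It\^o's formula with weight $\rho^2$, show that $\Lambda$ is a contraction once $\lambda$ is large, since $\lambda^{-1}$ factors absorb $L_1,L_2$. Alternatively one can directly treat the semilinear terms as source terms in the Carleman-based duality: keep them as $f\,dt+F\,dB$ and handle them through the weighted bound. Banach's theorem then gives the control, and $h_0$ comes from the constraint $\tau h\lesssim1$.

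\textbf{Main obstacle.} I expect the main difficulty to be the Carleman estimate with the gradient-weighted term in the discrete setting. Controlling the averaging operators $A_i$ in $\nabla_h$, together with the cross terms from $b_j$ in the diffusion part, requires careful discrete integration by parts and It\^o corrections. These must come with $h$-dependent remainders that can be absorbed under $\tau h\le\varepsilon_0$.
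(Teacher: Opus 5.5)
Your overall architecture (a Carleman estimate with a gradient term for the backward adjoint, penalized duality, then a fixed point) matches the paper's. The duality step, however, contains a conceptual error: the bounds are not uniform in $\epsilon$, and you cannot let $\epsilon\to0$.

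\textbf{The penalization parameter.} The adjoint has terminal datum $\epsilon^{-1}y_\epsilon(T)$. The $h^{-2}$ terminal term of the Carleman estimate therefore contributes $(h\epsilon)^{-2}\E\int_{\mathcal{M}}e^{2s(T)\varphi}|y_\epsilon(T)|^2$. This can be absorbed by $\epsilon^{-1}\E\int_{\mathcal{M}}|y_\epsilon(T)|^2$ only when $\epsilon\gtrsim h^{-2}\max_{\mathcal{M}}e^{2s(T)\varphi}$. The paper therefore fixes $\epsilon=\mathcal{E}_{\lambda,h}$, and the resulting bound $\E\int_{\mathcal{M}}|y(T)|^2\lesssim\mathcal{E}_{\lambda,h}(\cdots)$ is exactly where $\phi(h)$ comes from. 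Bounds uniform as $\epsilon\to0$ would give exact null controllability, which fails in general for these systems. Your Carleman estimate must also contain the $t=0$ term $\tau^2\lambda^3e^{2\lambda(6m+1)}e^{4\tau\varphi}|q(0)|^2$ on the left, to handle $\E\int_{\mathcal{M}}y_0q(0)$; the paper gets it from the large slope $\theta_t(0)=-4\sigma/T$.

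\textbf{The fixed point.} Your functional lacks the term $\frac12\E\int_0^T\int_{\mathcal{M}}\rho^2|y|^2$, and without it the fixed point is not justified. In the paper this term puts $-\rho^2y$ into the adjoint drift, so the duality identity plus the Carleman estimate bound $\E\int_0^T\int_{\mathcal{M}}\rho^2|y|^2$. The gradient bound then comes from It\^o's formula for $s^{-2}\lambda^{-2}\xi^{-3}\rho^2|y|^2$. Its time-derivative term is only $\le C\rho^2|y|^2$ and is controlled by that quantity, and its noise term matches the weight of $U$.

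An It\^o estimate with weight $\rho^2$, as you propose, cannot close, for two reasons. On $[T/2,T]$ one has $\partial_t(\rho^2)=2\tau\theta_t|\varphi|\rho^2>0$, which is large and on the wrong side. The noise term would also require $\E\int_0^T\int_{\mathcal{M}}\rho^2|U|^2$, which is not controlled.

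Once these bounds are available, the contraction factor $\tau^{-1}\lambda^{-2}$ comes from the mismatch between the source weight $s^{-3}\lambda^{-4}\xi^{-3}\rho^2$ and the gradient weight $s^{-2}\lambda^{-2}\xi^{-3}\rho^2$. It is made small by taking $\tau$ large, not $\lambda$, because the Carleman constants depend on $\lambda$. Your linearization $F_j=a_jz+b_j\cdot\nabla_h z$ does not give a contraction, since $z\mapsto(a_j,b_j)$ is not Lipschitz. Only your source-term alternative $v\mapsto F_1(\cdot,y_v,\nabla_h y_v)$ works, and that is the paper's route.

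\textbf{The noise nonlinearity $F_2$.} Keeping $a_2y+b_2\cdot\nabla_h y$ in the noise is a genuine obstruction, not a technicality. The adjoint drift then contains $A_iD_i(b_{2i}Z)$, where $Z$ is the adjoint noise (your $Q$). The duality argument controls $Z$ only through $\E\int_0^T\int_{\mathcal{M}}s^2\lambda^2\xi^3r^2|Z|^2$. A discrete difference of $Z$ costs a factor $h^{-2}\ge\varepsilon_0^{-2}s^2$, so this term cannot be absorbed uniformly in $h$. The missing observation is that $U$ acts on all of $\mathcal{M}$. Setting $U:=U^{\ast}-F_2(\cdot,y,\nabla_h y)$ reduces the problem to $F_2\equiv0$, which is the paper's first step.

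\textbf{Choice of parameters.} Taking $\tau\sim h^{-1}$ with $\delta$ fixed does give the decay of $y(T)$. But then the right-hand side $\E\int_{\mathcal{M}}e^{-4\tau\varphi}|y_0|^2$ of the control bound grows like $e^{C/h}$, so the controls are no longer uniformly bounded in $h$. The paper keeps $\tau$ fixed, large enough for the Carleman estimate and the contraction, and lets $\delta=\delta(h)\to0$ with $\tau(\delta T)^{-m}h=\varepsilon_0$. Then $s(T)=\varepsilon_0/h$ yields the factor $e^{-\kappa/h}$, while $e^{-4\tau\varphi}\le e^{C\tau}$ stays independent of $h$.
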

\begin{remark}
     Notice that in particular, considering $n=1$, and $\gamma_{i}=1$ and $\nabla_{h} y=0$ we recover the results presented in \cite{zhao:2024}, and for $F_{2}$ and $F_{1}$ linear functions we recover the results from \cite{WZ:2025}. Moreover, just considering $F_{1}$ and $F_{2}$ as linear functions in \eqref{eq:systemnonlinear} we recover the results presented in \cite{LPP:2025}.   Hence, Theorem \ref{theo:mainresult} stands for a generalization of the results presented in \cite{zhao:2024}, \cite{WZ:2025} and \cite{LPP:2025}.
\end{remark}
Assume that for every $y_{0}\in L^{2}_{\mathcal{F}_{0}}(\Omega;L^{2}_{h}(\mathcal{M}))$ there exist a pair of controls $(u,U^{\ast})\in L^{2}_{\mathbb{F}}(0,T;L_{h}^{2}(\mathcal{M}_{0}))\times L_{\mathbb{F}}^{2}(0,T;L_{h}^{2}(\mathcal{M}))$ such that the following system 
\begin{equation}\label{system:nonlinear}
    \left\{\begin{aligned}
\mathcal{P}y=&(F_1(\omega,t,x,y,\nabla_{h}y)+\mathbbm{1}_{\mathcal{M}_0}u)\,dt+U^{\ast}\,dB(t)\quad \text{in} \, Q, \\
                y=&0\quad \text{on}\quad\partial Q,\quad \left.y\right|_{t=0}=y_{0}\quad\text{in} \quad\mathcal{M},
    \end{aligned}\right.
    \end{equation}
 is $\phi$-null controllable. Defining $U:=U^{\ast}-F_{2}(\omega,t,x,\nabla_{h}y)$, we notice that thanks to assumptiion (A2) we have  $F_{2}(\omega,t,x,y,\nabla_{h}y)\in L^{2}_{\mathcal{F}}(0,T;L^{2}_{h}(\mathcal{M}))$ since the solution $y$ of \eqref{system:nonlinear} verifies $y\in L^{2}_{\mathcal{F}}(0,T;H^{1}_{0}(\mathcal{M}))$, and by assumption we have $U^{\ast}\in L^{2}_{\mathcal{F}}(0,T;L^{2}_{h}(\mathcal{M}))$. Moreover, $y$ solves \eqref{eq:systemnonlinear} with controls $(u,U)$. Consequently, the proof of
Theorem \ref{theo:mainresult} reduces to the case  $F_{2}(\cdot) \equiv 0$. For this reason, our main focus is the $\phi$-null controllability for the system \eqref{system:nonlinear}. \\
To deal with the nonlinearity in \eqref{system:nonlinear} it is necessary to obtain the $\phi$-null controllability result for the following linear forward semi-discrete parabolic system:
\begin{equation}\label{EQ:LFSPE}
    \left\{\begin{aligned}
        \mathcal{P}y&=\left(\sum_{i=1}^{n}a_{1i}A_iD_i(y)+a_2y+v+\mathbbm{1}_{\mathcal{M}_0}u\right)dt+U\,dB(t),\\
        y&=0\quad\text{on}\,\, \partial Q, \quad \left.y\right|_{t=0}=y_{0}\quad \text{in}\,\, \mathcal{M}.
    \end{aligned}\right.
\end{equation}
where $(u,U)\in L^2_{\mathbb{F}}(0,T; L_{h}^2(\mathcal{M}_0)) \times L^2_{\mathbb{F}}(0,T; L_{h}^2(\mathcal{M}))$ is a pair controls, $y$ denote the state variable associated with the initial state $y_0\in L^2_{\mathcal{F}_0}(\Omega;L_{h}^{2}(\mathcal{M}))$ and we assume that $a_{1i}\in L_{\mathds{F}}^{\infty}(0,T;L_{h}^{\infty}(\mathcal{M}))$ for $i=1,...,n$, $a_2\in L^{\infty}_{\mathds{F}}(0,T;L_{h}^{\infty}(\mathcal{M}))$ and $v\in L^2_{\mathds{F}}(0,T;L_{h}^2(\mathcal{M}))$.

Since $F_{1}$ depends on both the state and its gradient, it is necessary to obtain suitable estimates of these quantities to apply the fixed-point argument. Consequently, the results established in \cite{zhao:2024} are not applicable in this setting, even when restricted to the one-dimensional case considered in that work because it does not estimate the gradient term. Building on the ideas in \cite{ZXL:2025}, we formulate system \eqref{EQ:LFSPE} and derive the following result, which addresses the requirements for the fixed-point approach and the controllability property of the newly proposed system:

\begin{theorem}\label{lemma:theonullauxiliar}
Let $T>0$, $a_{1i}\in L^{\infty}_{\mathbb{F}}(0,T;L_{h}^{\infty}(\mathcal{M}))$ for $i=1,\ldots,n$, $a_{2}\in L^{\infty}_{\mathbb{F}}(0,T;L_{h}^{\infty}(\mathcal{M}))$,  $v\in L^{2}_{\mathbb{F}}(0,T;L_{h}^{2}(\mathcal{M}))$ and $y_{0}\in L^{2}_{\mathcal{F}_{0}}(\Omega;L_{h}^{2}(\mathcal{M}))$. Then, there exist $\lambda_0 > 0$ such that for all $\lambda > \lambda_0$, the problem admits constants $\tau_0 > 1$ and $\varepsilon_0 > 0$ (depending on $G_{0}$, $c_{0}$, $T$, and $\lambda$), and a pair of control functions $(u, U) \in L^2_{\mathbb{F}}(0,T; L_{h}^{2}(\mathcal{M}_0)) \times L^2_{\mathbb{F}}(0,T; L_{h}^2(\mathcal{M}))$. Consequently, the corresponding solution $y$ to \eqref{EQ:LFSPE} satisfies the following
\begin{equation}\label{eq:boundyz(T)}
\begin{split}
\E \int_{\mathcal{M}}|y(T)|^2\leq& \mathcal{E}_{\lambda,h}\left(\E\int_{Q}s^{-3}\lambda^{-4}\xi^{-3}\rho^2|v|^2\,dt+\E\int_{\mathcal{M}}\tau^{-1}\lambda^{-2}e^{-2\lambda(6m+1)}e^{-4\tau\varphi}|y_0|^2\right)
\end{split}
\end{equation}
and
\begin{equation}\label{eq:estimateCarlmenz}
\begin{split}
\sum_{i=1}^{n}&\E\int_{Q_i^{\ast}}s^{-2}\lambda^{-2}\xi^{-3}\rho^2|D_iy|^2\,dt+\E\int_{Q}\rho^2|y|^2\,dt+\E\int_{0}^T\int_{\mathcal{M}_0}s^{-3}\lambda^{-4}\xi^{-3}\rho^{2}|u|^2\,dt\\
&+\E\int_{Q}s^{-2}\lambda^{-2}\xi^{-3}\rho^2|U|^2\,dt\leq C\left(\E\int_{Q}s^{-3}\lambda^{-4}\xi^{-3}\rho^2|v|^2\,dt\right.\\
&\left.+\E\int_{\mathcal{M}}\tau^{-1}\lambda^{-2}e^{-2\lambda(6m+1)}e^{-4\tau\varphi}|y_0|^2\right),
\end{split}
\end{equation}
for all $\lambda > \lambda_0$, $\tau>\tau_0$, $0<h<h_0$, $0<\delta<1/2$, $\tau(T\delta)^{-m}h\leq \varepsilon_0$ and with $\mathcal{E}_{\lambda,h}:= Ch^{-2}e^{-2s(T)(\lambda-1)e^{6\lambda(m+1)}}$.
\end{theorem}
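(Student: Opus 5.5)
Our plan is to prove Theorem \ref{lemma:theonullauxiliar} by the standard penalized variational (Hilbert Uniqueness / Fursikov--Imanuvilov) method, driven by a new semi-discrete Carleman estimate for the adjoint backward operator. The adjoint system reads
\begin{equation*}
dz+\sum_{i=1}^n D_i(\gamma_i D_i z)\,dt=\Big(\sum_{i=1}^n D_i\big(A_i(a_{1i}z)\big)\ \text{(suitably averaged)}-a_2 z+g\Big)dt+Z\,dB(t),
\end{equation*}
with $z=0$ on $\partial Q$ and $z(T)=z_T$. The first step is a global Carleman estimate for this backward equation with weights $s=\tau\theta$, $\rho=e^{-s\varphi}$. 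The source term enters in divergence form, and the estimate must control $z$ and $D_iz$ in weighted norms by three quantities: the local observation on $\mathcal{M}_0$, the term $s^{2}\lambda^2\xi^{3}\rho^{-2}|Z|^2$, and the weighted norm of $g$. It must also contain an additional term of order $h^{-2}\E\int_{\mathcal{M}} e^{-2s(T)\varphi}|z(T)|^2$, which is typical of semi-discrete estimates.

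We would obtain this estimate by conjugating with $r$, i.e. setting $w=rz$. We then split the conjugated operator into symmetric and antisymmetric parts and compute the cross product with the discrete calculus rules, as in \cite{LPP:2025}. The condition $\tau(T\delta)^{-m}h\le\varepsilon_0$ guarantees that $sh$ is bounded. Under that condition the $O((sh)^k)$ remainders arising from $A_i r$, $D_i r$ can be absorbed. The choice of $\theta$, bounded at $t=T$ thanks to $\delta$, together with $\theta\ge1$ near $t=0$ and the exponent $\sigma$, handles the boundary terms at $t=0$ and leaves the $t=T$ term. The first-order coefficients $a_{1i}$ are treated by a duality/$H^{-1}$-type argument, following \cite{ZXL:2025}. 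Concretely, we would apply the Carleman estimate in the form with a divergence source $D_i(g_i)$, which is weighted by $s\lambda\xi$ less than $g$; this lets the terms $a_{1i}z$ be absorbed for $\tau,\lambda$ large. We expect this step to be the main obstacle: proving a semi-discrete Carleman estimate with a divergence-form right-hand side and the precise powers $s^{-2}\lambda^{-2}\xi^{-3}$, $s^{-3}\lambda^{-4}\xi^{-3}$ appearing in \eqref{eq:estimateCarlmenz}. Our first attempt would be to combine the $L^2$ Carleman estimate of \cite{LPP:2025} with an auxiliary weighted elliptic/parabolic lifting of $D_i(g_i)$.

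Next we construct the controls. For $\varepsilon>0$ small we minimize the functional
\begin{equation*}
J(u,U)=\frac12\E\int_Q\rho^2|y|^2+\frac12\E\int_0^T\!\!\int_{\mathcal{M}_0}s^{-3}\lambda^{-4}\xi^{-3}\rho^2|u|^2+\frac12\E\int_Q s^{-2}\lambda^{-2}\xi^{-3}\rho^2|U|^2+\frac{1}{2h^{2}}\E\int_{\mathcal M} e^{-2s(T)\varphi}|y(T)|^2 ,
\end{equation*}
where $y$ solves \eqref{EQ:LFSPE}. This is a strictly convex, coercive, lower semicontinuous problem on a Hilbert space, so it has a unique minimizer $(u,U)$. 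The optimality system gives $u=-\mathbbm{1}_{\mathcal{M}_0}s^3\lambda^4\xi^3\rho^{-2}z$ and $U=-s^2\lambda^2\xi^3\rho^{-2}Z$, where $(z,Z)$ solves the adjoint equation with source $g=\rho^2 y$ and terminal datum proportional to $h^{-2}e^{-2s(T)\varphi}y(T)$.

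To obtain the estimates, we pair the state equation with $z$ using Itô's formula. This expresses $J$ through $\E\int_Q v z$ and $\E\int_{\mathcal M} y_0 z(0)$. Each of these is bounded by Cauchy--Schwarz with the weights $s^{-3}\lambda^{-4}\xi^{-3}\rho^2$ on $v$, and by the Carleman bound on $z(0)$. At $t=0$ we have $s(0)=2\tau$ and $\rho^{-2}(0)=e^{4\tau\varphi}$, which produces exactly the factor $\tau^{-1}\lambda^{-2}e^{-2\lambda(6m+1)}e^{-4\tau\varphi}$ on $y_0$. This gives \eqref{eq:estimateCarlmenz} except for the gradient term. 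For the gradient, we apply Itô's formula to $s^{-2}\lambda^{-2}\xi^{-3}\rho^2|y|^2$ and perform a discrete integration by parts. The resulting commutator terms are $D_i$ acting on the weight, of size $s\lambda\xi$ times the weight plus $O(sh)$ errors. These are controlled by $\E\int_Q\rho^2|y|^2$ and the control terms already bounded. The term involving $a_{1i}A_iD_iy$ is absorbed by Young's inequality for $\tau$ large.

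Finally, \eqref{eq:boundyz(T)} follows because $J$ bounds $h^{-2}\E\int e^{-2s(T)\varphi}|y(T)|^2$. Since $\varphi\le-(\lambda-1)e^{6\lambda(m+1)}$, we get $e^{-2s(T)\varphi}\ge e^{2s(T)(\lambda-1)e^{6\lambda(m+1)}}$, which gives the constant $\mathcal E_{\lambda,h}$.
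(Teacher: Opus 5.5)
Your overall scheme is the paper's: a penalized HUM functional, the optimality system, It\^o duality, the Carleman estimate for the adjoint, and a separate weighted energy estimate for $D_iy$. However, the terminal penalization is wrong as written, and that is exactly the step that produces $\mathcal{E}_{\lambda,h}$.

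First, the terminal term in \eqref{eq:InitialCarleman} is $h^{-2}\E\int_{\mathcal M}e^{+2s(T)\varphi}|z(T)|^2$. Its weight $r(T)^2$ is small because $\varphi<0$. With your weight $e^{-2s(T)\varphi}$ the estimate would be useless.

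Second, suppose you penalize by $\frac{1}{2}\E\int_{\mathcal M}\kappa|y(T)|^2$. The optimality system gives $z(T)=\kappa y(T)$, and the duality identity puts $\E\int_{\mathcal M}\kappa|y(T)|^2$ on the left. After Young's inequality with $\mu=(2C)^{-1}$, the Carleman estimate returns $\frac{1}{2}h^{-2}\E\int_{\mathcal M}e^{2s(T)\varphi}\kappa^2|y(T)|^2$ on the right. Absorbing it requires $h^{-2}e^{2s(T)\varphi}\kappa\le 1$, so $\kappa$ can be at most the reciprocal of the Carleman terminal weight. Your choice $\kappa=h^{-2}e^{-2s(T)\varphi}$ returns $\frac{1}{2}h^{-6}\E\int_{\mathcal M}e^{-2s(T)\varphi}|y(T)|^2$. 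This exceeds the left-hand side $h^{-2}\E\int_{\mathcal M}e^{-2s(T)\varphi}|y(T)|^2$ by a factor of order $h^{-4}$, so no uniform bound on the minimizer follows. Your last step would also give a prefactor $h^{2}$ rather than the $h^{-2}$ in $\mathcal{E}_{\lambda,h}$, which is a symptom of the same mismatch.

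The repair is to take $\kappa=h^{2}e^{-2s(T)\varphi}$, or, as the paper does, the constant $\kappa=1/\epsilon$ with $\epsilon=\mathcal{E}_{\lambda,h}$. The latter is admissible because $\varphi\le-(\lambda-1)e^{6\lambda(m+1)}$, and then $\E\int_{\mathcal M}|y(T)|^2\le 2\epsilon(\cdots)$ gives \eqref{eq:boundyz(T)}. Separately, the adjoint source must be $-\rho^2y$, not $+\rho^2y$, so that $\E\int_Q\rho^2|y|^2$ appears with the correct sign in the duality identity.

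On the first-order terms you depart from the paper. The paper proves no divergence-source Carleman estimate. It applies Theorem~\ref{theo:Carleman}, whose source is weighted only by $e^{2s\varphi}$, directly to \eqref{eq:LBSPE}, and in \eqref{eq:applyngCarlemaninitial} it keeps only $\rho^2y_\epsilon$ in the source. Your concern is legitimate. For merely bounded $a_{1i}$, the term $A_iD_i(a_{1i}z)=D_iA_i(a_{1i}z)$ contains a part of size $h^{-1}|z|$ that an $L^2$ source cannot absorb uniformly in $h$. So on this point your plan is more careful than the written argument.

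Likewise, your Young-inequality treatment of $a_{1i}A_iD_iy$ in the gradient estimate avoids the paper's bound on $D_i(a_{1i}\xi^{-3}\rho^2)$, which implicitly needs $D_ia_{1i}$ bounded. Still, the divergence-form estimate is unproved in your proposal, so it is an open ingredient rather than a completed step. Note that the fixed-point argument in Section~\ref{sec:nullsemilinear} only uses the case $a_{1i}=a_2=0$.
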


\begin{remark}
Observe that for each $h$, we obtain a solution $y$ to system \eqref{EQ:LFSPE} that satisfies \eqref{eq:boundyz(T)} and \eqref{eq:estimateCarlmenz}, respectively. However, the right-hand side of inequality \eqref{eq:boundyz(T)}-\eqref{eq:estimateCarlmenz} does not depend on $h$. Therefore, we can recover the result in the continuous setting, and from \eqref{eq:boundyz(T)} we deduce the null controllability in the continuous case. Moreover, by considering $v= 0$, we observe the equivalence with the one-dimensional result in \cite{zhao:2024} or arbitrary dimension result in \cite{LPP:2025}.
\end{remark}

Now, the proof presented in Section~\ref{sec:proofnullcontrollability} of the Theorem~\ref{lemma:theonullauxiliar} relies on an argument based on the minimization of an appropriate functional and a new Carleman estimate applied to the backward equation associated with \eqref{EQ:LFSPE}. Following the ideas in \cite{LPP:2025}, we first obtain a preliminary Carleman estimate for the operator backward $\mathcal{P}^{\ast}z:=dz+\sum_{i=1}^{n}D_i(\gamma_iD_iz)\,dt$ in Appendix~\ref{firstcarleman} by analyzing the modifications introduced by the new weight function proposed in \cite{HS:LB:P-2023,ZXL:2025}. Finally, inspired by \cite{zhao:2024}, this estimate is refined to establish the Carleman inequality required for the proof of Theorem~\ref{lemma:theonullauxiliar}, as follows:
\begin{theorem}\label{theo:Carleman}
Let $\psi$ satisfy assumption \eqref{assumtion:psi} and $\varphi$ according to \eqref{funcion-peso-2}. For $\lambda\geq \lambda_0>1$ sufficiently large, there exist $C$, $\tau_{0}\geq 1$, $h_{0}>0$, $\varepsilon_0 >0$, depending on $G_{0}$, $G_{1}$, $c_{0}$, $T$, and $\lambda$, such that
\begin{equation}\label{eq:InitialCarleman}
\begin{aligned}
    J(w)+&\left.\E\int_{\mathcal{M}}\tau^2\lambda_{0}^3e^{2\lambda_{0}(6m+1)}e^{4\tau\varphi}|w|^2\right|_{t=0}+\sum_{i=1}^{n}\left.\E\int_{\mathcal{M}_{i}^{\ast}}e^{4\tau \varphi(x)}|D_iw|^2\right|_{t=0}\\
    \leq  C&\left(\E\int_{0}^T\int_{\mathcal{M}_{0}}s^{3}\lambda^{4}_{0}\xi^{3}e^{2s\varphi}\,|w|^{2}\,dt+\E\int_{Q}e^{2s\varphi}\,|f|^2\,dt\right.\\
    &\left.+\E\int_{Q}s^2\lambda_{0}^2\xi^2e^{2s\varphi}\,|g|^2\,dt+\frac{1}{h^2}\left.\E\int_{\mathcal{M}}e^{2s(T)\varphi}\,|w|^2\right|_{t=T}\right),
\end{aligned}
\end{equation}
for all $\tau\geq \tau_0 $, $0<h\leq h_0$, $0<\delta<1/2$, $s(t)h\leq \delta_0$, where\break $\displaystyle J(w):=  \E\int_{Q}s^{3}\lambda_{0}^{4}\xi^{3}e^{2s\varphi}\,|w|^{2}\,dt+ \sum_{i=1}^{n}\E\int_{Q_{i}^{\ast}}s\lambda_{0}^{2}\xi e^{2s\varphi}|D_{i}w|^{2}dt$,  $f,g\in L^2_{\mathds{F}}(0,T;L_{h}^{2}(\mathcal{M}))$ and $w$ satisfy $dw+\sum_{i=1}^{n} D_i(\gamma_i D_{i}w)\,dt=fdt+gdB(t)$ with $w=0$ on $\partial \mathcal{M}$.
\end{theorem}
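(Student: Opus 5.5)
The proof of Theorem~\ref{theo:Carleman} will proceed in two stages. First we use the preliminary Carleman estimate from Appendix~\ref{firstcarleman}, adapted to the time weight $\theta$ in \eqref{eq:temporalWeightfunctional}. Then we remove the initial-layer contributions, following \cite{zhao:2024}.

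\textbf{Step 1: the basic estimate.} We apply the appendix estimate to $w$, with $\varphi$ from \eqref{funcion-peso-2} and $s=\tau\theta$. Because $\theta$ is bounded on $[0,T]$ (thanks to $\delta>0$), we have $s(t)h\le \tau(T\delta)^{-m}h\le\varepsilon_0$. This keeps all discrete commutator errors of the form $O(sh)$ and $O((sh)^2)$ absorbable once $\lambda$ and $\tau$ are large.

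The output is $J(w)$ bounded by the local observation term, $\E\int_Q e^{2s\varphi}|f|^2$, $\E\int_Q s^2\lambda^2\xi^2e^{2s\varphi}|g|^2$, and boundary-in-time terms at $t=0$ and $t=T$. It also contains a term involving $|\partial_t\theta|$ in the interior.
\begin{itemize}
\item On $[3T/4,T]$ we have $|\theta'|\le C\theta^{1+1/m}$, so this term is absorbed as in the usual Fursikov--Imanuvilov argument.
\item On $[0,T/4]$, $|\theta'|=4\sigma T^{-1}(1-4t/T)^{\sigma-1}$ is large. Its sign, $\theta'<0$ with $\varphi<0$, gives a favorable term $-\E\int s'\varphi e^{2s\varphi}|w|^2\ge0$ on the left. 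Alternatively, the choice $\sigma=\tau\lambda^2e^{\lambda(6m-4)}$ lets it be dominated by $s^3\lambda^4\xi^3$.
\end{itemize}
The term at $t=T$ is bounded by $h^{-2}\E\int_{\mathcal M}e^{2s(T)\varphi}|w(T)|^2$. This uses the inverse inequality $\|D_iw\|\le Ch^{-1}\|w\|$ to control the gradient terms there, which is the source of the factor $1/h^2$.

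\textbf{Step 2: recovering the $t=0$ terms.} Since $\theta(0)=2$ and $\varphi<0$, we have $e^{2s(0)\varphi}=e^{4\tau\varphi}$, so the quantities at $t=0$ carry the weight $e^{4\tau\varphi}$. Two energy estimates will give them.
\begin{itemize}
\item \emph{Zero-order term.} Apply Itô's formula to $\eta(t)e^{4\tau\varphi}|w|^2$ with a cutoff $\eta$ equal to $1$ near $0$ and supported in $[0,T/2]$. Discrete summation by parts gives $\E\int\tau^2\lambda^3e^{2\lambda(6m+1)}e^{4\tau\varphi}|w(0)|^2$, bounded by $\E\int_{T/4}^{T/2}$ of weighted $|w|^2$ and $|D_iw|^2$ plus $f,g$ terms. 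On $[T/4,T/2]$ we have $s=\tau$, so these weights are dominated by the integrands in $J(w)$, given that $\xi\ge e^{6\lambda m}$, $\xi\le e^{\lambda(6m+1)}$ and $e^{4\tau\varphi}\le e^{2s\varphi}$. The inequality $e^{4\tau\varphi}\le e^{2s\varphi}$ holds because $s\le 2\tau$ and $\varphi<0$.
\item \emph{Gradient term.} Apply Itô's formula to $\eta e^{4\tau\varphi}\sum_i\gamma_i|D_iw|^2$, multiplying the equation by $\sum_iD_i(\gamma_iD_iw)$. The discrete derivatives of $e^{4\tau\varphi}$ produce $O(\tau\lambda\xi)$ factors and $A_i$-averaging errors. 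These are controlled by assumption (A1) and $\tau h\le\varepsilon_0$. The Itô correction gives $|D_ig|^2$, which cannot be bounded by $|g|^2$ alone. We will handle it by keeping the stochastic term on the favorable side of the backward equation, so that it enters with a good sign. If that fails, we will use the inverse inequality together with the $s^2\lambda^2$ weight on $g$.
\end{itemize}

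\textbf{Main obstacle.} The expected hardest part is Step 1 on $[0,T/4]$ in the discrete setting. The large derivative $\theta'\sim\sigma$ interacts with the discrete calculus of the weight: expansions of $D_i(e^{s\varphi})$ and $A_i(e^{s\varphi})$ yield $O(sh)$ remainders multiplied by $\theta'$. We will first check that these remainders are of size $\tau\sigma h\,s^2\lambda^2\xi^2$. Then we will choose the smallness condition on $\varepsilon_0$, depending on $\lambda$ as allowed in the statement, so that they are absorbed into $J(w)$.
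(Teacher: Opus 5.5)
\textbf{Gap in Step 2.} Step 2 is where your argument fails, and the paper's proof does not need it. In the paper, both $t=0$ terms come directly out of the conjugated Carleman identity with a favorable sign. Pairing $C_3(z)=r\partial_t\rho\,z=-s'\varphi z$ with $B_1(z)=dz$ and applying It\^o's formula gives
\[
I_{31}=-\E\int_{\mathcal{M}}s'\varphi|z|^2\Big|_0^T+\E\int_Q s''\varphi|z|^2\,dt+\E\int_Q s'\varphi|dz|^2 .
\]
We have $s'(0)=-4\tau\sigma/T$ and $\varphi\le-(\lambda-1)e^{6\lambda(m+1)}$. So the $t=0$ part is $\frac{4}{T}\tau\sigma|\varphi|\,|z(0)|^2\ge C\tau^2\lambda^3e^{2\lambda(6m+1)}e^{4\tau\varphi}|w(0)|^2$. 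This explains the choice $\sigma=\tau\lambda^2e^{\lambda(6m-4)}$. It is large enough to produce this weight. It is also small enough that the interior term satisfies $\tau|\theta''||\varphi|\lesssim\lambda e^{-2\lambda}s^3\lambda^4\xi^3$, so it is absorbed.

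Likewise, $I_{11}$ yields $-\sum_i\E\int\gamma_i|D_iz|^2\big|_0^T+\sum_i\E\int\gamma_i|D_i(dz)|^2$. The $t=0$ part is favorable because the equation is backward. The second sum is nonnegative, which is how the $|D_ig|^2$ difficulty disappears. Returning to $w$ then gives both $t=0$ terms of the statement.

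Your sign claim on $[0,T/4]$ is also wrong. There $s'<0$ and $\varphi<0$, so $-s'\varphi e^{2s\varphi}|w|^2\le 0$. In the pairing with $dz$, the first derivative $s'$ survives only in the boundary term above and in a $|dz|^2$ term. The interior $|z|^2$ contribution is the unfavorable $s''\varphi$.

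\textbf{Why your energy estimates do not close.} With $\eta\equiv1$ near $0$, two kinds of terms are integrated over all of $\mathrm{supp}\,\eta\supset[0,T/4]$, not just over $[T/4,T/2]$. These are the commutator terms from $D_i(e^{4\tau\varphi})=\mathcal{O}(\tau\lambda\xi)e^{4\tau\varphi}$ and the cross term $e^{4\tau\varphi}wf$. You must multiply them by $K=\tau^2\lambda^3e^{2\lambda(6m+1)}$, and $e^{2s\varphi}\to e^{4\tau\varphi}$ as $t\to0$. Absorbing them into $J(w)$ and $C\E\int_Qe^{2s\varphi}|f|^2$ would then require, near $t=0$, both $K\tau^2\lambda^2\xi^2\lesssim s^3\lambda^4\xi^3$ and $K^2\lesssim s^3\lambda^4\xi^3$. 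Both fail by factors growing linearly in $\tau$.

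The gradient estimate has the same defect: the commutator weight $\tau^2\lambda^2\xi^2$ against $s\lambda^2\xi$ gives a ratio $\sim\tau\xi$. You would therefore end up with $\tau$-dependent constants. Only a large favorable time derivative of the weight near $t=0$ can pay for these terms, and that is exactly what the $I_{31}$ boundary term supplies.

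A smaller omission: the statement contains no local gradient observation term. The paper removes it with the cutoff energy estimate of Lemma~\ref{lem:estimate:termDw}, and your proposal does not address this step.
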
 
\begin{remark}
In comparison with the Carleman estimate in the continuous setting presented in \cite{ZXL:2025}, no additional truncation is applied to the weight function. In contrast, for the first Carleman estimate in Appendix~\ref{firstcarleman}, truncation is required to ensure the validity of the asymptotic properties of the weight functions established in \cite{AA:perez:2024}. 
\end{remark}
\subsection{Organization of the paper}
In Section \ref{sec:proofnullcontrollability}, we prove the $\phi$-null controllability for semi-discrete forward linear stochastic parabolic equations with source (Theorem \ref{lemma:theonullauxiliar}), by means of a minimization argument combined with the Carleman estimate. Section
\ref{sec:nullsemilinear} extends these results to the semilinear case via a fixed-point argument, completing the proof of Theorem \ref{theo:mainresult}. Section \ref{sec:comments} collects comments and concluding remarks, including a discussion of open questions. Finally, Appendix \ref{firstcarleman} establishes the new Carleman estimate for semi-discrete backward stochastic parabolic operators (Theorem \ref{theo:Carleman}); Appendix \ref{missingterms} provides the technical estimates for the cross-product terms; and Appendix \ref{sec:intermediateresult} contains the proof of the gradient localization.
\section{\texorpdfstring{$\phi$-}{}null Controllability for semi-discrete forward linear stochastic parabolic equations with source (proof of the Theorem~\ref{lemma:theonullauxiliar})}\label{sec:proofnullcontrollability}

\subsection{Minimization problem} Let $\mathcal{U}$ be an admissible control set given by 
\begin{equation*}
\begin{aligned}
    \mathcal{U}:=&\{ (u, U) \in L^2_{\mathbb{F}}(0,T; L_{h}^{2}(\mathcal{M}_0)) \times L^2_{\mathbb{F}}(0,T; L_{h}^{2}(\mathcal{M})) :\\
    &\E\int_0^T\int_{\mathcal{M}_0}s^{-3}\lambda^{-4}\xi^{-3}\rho^2|u|^2\,dt<\infty\,\quad\text{and}\quad\quad\E\int_{Q}s^{-2}\lambda^{-2}\xi^{-3}\rho^{2}|U|^2\,dt<\infty\}.
\end{aligned}
\end{equation*}
Then, we consider the following minimization problem
\begin{equation}\label{eq:ProblemMinimization}
    \inf_{(u,U)\in\mathcal{U}}J_{\epsilon}(u,U)\quad\quad\text{subject to the system \eqref{EQ:LFSPE}}
\end{equation}
where $J_\epsilon$ is defined as:
\begin{equation*}
\begin{aligned}
    J_{\epsilon}(u,U):=&\frac{1}{2}\E\int_{0}^T\int_{\mathcal{M}_0}s^{-3}\lambda^{-4}\xi^{-3}\rho^{2}|u|^2\,dt+\frac{1}{2}\E\int_{Q}s^{-2}\lambda^{-2}\xi^{-3}\rho^2|U|^2\,dt\\
    &+\frac{1}{2}\E\int_{Q}\rho^2|y|^2\,dt+\frac{1}{2\epsilon}\left.\int_{\mathcal{M}}|y|^2\right|_{t=T}.
\end{aligned}
\end{equation*}
We see that for $\epsilon>0$, the functional $J_\epsilon$ is continuous, strictly convex, and coercive over $\mathcal{U}$. Hence, the problem \eqref{eq:ProblemMinimization} admits a unique optimal control pairs $(u_\epsilon,U_{\epsilon})\in\mathcal{U}$, and the associated optimal solution for \eqref{EQ:LFSPE} is denoted by $y_\epsilon$. 

Our next goal is to determine an uniform bounds for the triple $(u_\epsilon,U_\epsilon,y_\epsilon)$. Using a duality argument, it follows from the Euler-Lagrange equation $J'_{\epsilon}(u_{\epsilon},U_{\epsilon})=0$ ($J'$ denotes the Fréchet derivative) that
the controls are given by
\begin{equation}\label{eq:formcontrols}  u_\epsilon=-s^3\lambda^4\xi^3r^2z_\epsilon\mathbbm{1}_{\mathcal{M}_0}\quad\text{and}\quad U_{\epsilon}=-s^2\lambda^2\xi^3r^2Z_\epsilon,
\end{equation}
where $(z_\epsilon,Z_\epsilon)$ satisfies the backward equation
\begin{equation}\label{eq:LBSPE}
\left\{\begin{array}{cc}
    \mathcal{P}^{\ast}z_{\epsilon}=\left(   \sum_{i=1}^{n}A_iD_i(a_{1i}z_{\epsilon})-a_2z_\epsilon-\rho^2y_\epsilon\right)\,dt+Z_{\epsilon}dB(t) &  \\
     z_\epsilon=0\quad\text{on}\,\,\partial Q,\quad \left.z_\epsilon\right|_{t=T}=\left.\frac{1}{\epsilon}y\right|_{t=T}\quad \text{in}\,\,\mathcal{M}, & 
\end{array}\right.
\end{equation}
and $y_\epsilon$ is the solution to system \eqref{EQ:LFSPE} associated with $(u_\epsilon,U_\epsilon)$.

Applying It\^{o}'s formula to the process $y_\epsilon z_\epsilon$, integrating over $Q$, then taking expectation and using that $y_{\epsilon}$ and $z_{\epsilon}$ satisfy \eqref{eq:LBSPE} and \eqref{EQ:LFSPE}, respectively; yield
\begin{align*}
    \E\int_{\mathcal{M}}&\left.y_{\epsilon}z_{\epsilon}\right|_{t=T}-\E\int_{\mathcal{M}}\left.y_\epsilon z_\epsilon\right|_{t=0}=\E\int_{Q}z_\epsilon dy_\epsilon+\E\int_{Q}y_\epsilon dz_\epsilon+\E\int_{Q}dz_\epsilon\,dy_\epsilon\\   
    =&\E\int_{Q}z_\epsilon\left(\sum_{i=1}^{n}D_i(\gamma_iD_iy_\epsilon)+a_{1i}A_iD_i(y_{\epsilon})+a_2y_\epsilon+v+\mathbbm{1}_{\mathcal{M}_0}u\right)\,dt\\
    &+\E\int_{Q}y_\epsilon\left(-\sum_{i=1}^{n}D_i(\gamma_iD_iz_\epsilon)+A_iD_i(a_{1i}z_\epsilon)-a_2z_\epsilon-\rho^2y_\epsilon\right)\,dt+\E\int_{Q}Z_{\epsilon}U_{\epsilon}\,dt.
    \end{align*}
Notice that using the discrete integration by parts \cite[Lemma 2.2]{LDOP-2021} and that $z_\epsilon=y_{\epsilon}=0$ on $\partial Q$, we obtain 
    \begin{align*}
\E\int_{\mathcal{M}}\left.y_{\epsilon}z_{\epsilon}\right|_{t=T}-\E\int_{\mathcal{M}}\left.y_\epsilon z_\epsilon\right|_{t=0}=&\E\int_{Q}z_\epsilon v\,dt+\E\int_{Q}\mathbbm{1}_{\mathcal{M}_0}u_{\epsilon}z_\epsilon\,dt-\E\int_{Q}\rho^2|y_\epsilon|^2\,dt\\
&+\E\int_{Q}Z_{\epsilon}U_{\epsilon}\,dt.
\end{align*}
Substituting the terminal value of $z_\epsilon$, and the characterization of the controls $(u_{\epsilon},U_{\epsilon})$ given by \eqref{eq:formcontrols} on the above equation, we rewrite it as
\begin{equation}\label{eq1:bound(u,U,z)}
    \begin{aligned}
    \frac{1}{\epsilon}\left.\E\int_{\mathcal{M}}|y_{\epsilon}|^2\right|_{t=T}+\E\int_{Q}\rho^2&|y_{\epsilon}|^2\,dt+\E\int_{0}^T\int_{\mathcal{M}_0}s^3\lambda^4\xi^3r^2|z_\epsilon|^2\,dt\\
    &+\E\int_{Q}s^2\lambda^2\xi^3r^2|Z_{\epsilon}|^2\,dt=\E\int_{Q}z_{\epsilon}v\,dt+\E\int_{\mathcal{M}}\left.y_{\epsilon}z_{\epsilon}\right|_{t=0}.
    \end{aligned}
\end{equation}
Now, applying Young's inequality on the right-hand side of \eqref{eq1:bound(u,U,z)} it follows that 
 \begin{equation}\label{ine:young:z:y(0)}
 \begin{aligned}
     \E\int_{Q}z_{\epsilon}&v\,dt+\E\left.\int_{\mathcal{M}}y_{\epsilon}z_{\epsilon}\right|_{t=0}\\
     &\leq \mu\left( \left.\E\int_{\mathcal{M}}\tau^{2}\lambda^{3}e^{2\lambda(6m+1)}e^{4\tau\varphi}|z_{\epsilon}|^{2}\right|_{t=0}+\E\int_{Q}s^3\lambda^4\xi^3r^2|z_\epsilon|^2\,dt\right)\\
     &+\frac{1}{4\mu}\left(\left.\E\int_{\mathcal{M}}\tau^{-2}\lambda^{-3}e^{-2\lambda(6m+1)}e^{-4\tau\varphi}|y_{\epsilon}|^{2}\right|_{t=0}+\E\int_{Q}s^{-3}\lambda^{-4}\xi^{-3}\rho^2|v|^2\,dt\right),
 \end{aligned}
 \end{equation}
 where the additional scaling terms are chosen according to the Carleman estimate \eqref{eq:InitialCarleman}. Thus, combining \eqref{ine:young:z:y(0)} with \eqref{eq1:bound(u,U,z)} we obtain
 \begin{equation}\label{eq2:bound(u,U,z)}
     \begin{aligned}
    \frac{1}{\epsilon}\E&\int_{\mathcal{M}}|y_{\epsilon}|^{2}|_{t=T}+\E\int_{Q}\rho^2|y_{\epsilon}|^{2}dt+\E\int_{0}^T\int_{\mathcal{M}_0}s^{3}\lambda^{4}\xi^{3}r^{2}|z_\epsilon|^{2}dt+\E\int_{Q}s^{2}\lambda^{2}\xi^{3}r^{2}|Z_{\epsilon}|^{2}dt\\
    &\leq  \mu\left( \left.\E\int_{\mathcal{M}}\tau^{2}\lambda^{3}e^{2\lambda(6m+1)}e^{4\tau\varphi}|z_{\epsilon}|^{2}\right|_{t=0}+\E\int_{Q}s^3\lambda^{4}\xi^{3}r^{2}|z_\epsilon|^{2}dt\right)\\
     &+\frac{1}{4\mu}\left(\left.\E\int_{\mathcal{M}}\tau^{-2}\lambda^{-3}e^{-2\lambda(6m+1)}e^{-4\tau\varphi}|y_{\epsilon}|^{2}\right|_{t=0}+\E\int_{Q}s^{-3}\lambda^{-4}\xi^{-3}\rho^2|v|^2\,dt\right).
    \end{aligned}
 \end{equation}
 On the other hand, thanks to Carleman estimate \eqref{eq:InitialCarleman} we can assert that exist $h_1>0$ sufficiently small, $\lambda_1,\tau_1>1$ such that the solution of the system \eqref{eq:LBSPE} verifies
\begin{equation}\label{eq:applyngCarlemaninitial}
     \begin{aligned}\E\left.\int_{\mathcal{M}}\tau^2\lambda^{3}e^{2\lambda(6m+1)}e^{4\tau \varphi}|z_{\epsilon}|^2\right|_{t=0}&+\E\int_{Q}s^{3}\lambda^4\xi^3r^2|z_\epsilon|^{2}dt\\
     &\leq  C\left(\E\int_{0}^T\int_{\mathcal{M}_0}s^3\lambda^4\xi^3r^2|z_\epsilon|^2\,dt+\E\int_{Q}r^2|\rho^2 y_{\epsilon}|^{2}dt\right.\\
     +\E&\left.\int_{Q}s^2\lambda^2\xi^2r^2|Z_\epsilon|^2\,dt+\frac{1}{(h\epsilon)^2}\E\left.\int_{\mathcal{M}}e^{2s(T)\varphi}|y_{\epsilon}|^2\right|_{t=T}\right)
     \end{aligned}
 \end{equation}
 for all $\tau>\tau_1$, $0<h<h_{1}$, $0<\delta<1/2$ and $s(t)h\leq \delta_0$.
 
 Taking $\mu=(2 C)^{-1}$ in \eqref{eq2:bound(u,U,z)}, and then using  \eqref{eq:applyngCarlemaninitial} we conclude that
 \begin{equation}\label{eq3:bound(u,U,z)}
     \begin{aligned}
    \frac{1}{\epsilon}\E&\left.\int_{\mathcal{M}}|y_{\epsilon}|^2\right|_{t=T}+\frac{1}{2}\E\int_{Q}\rho^2|y_{\epsilon}|^2\,dt+\frac{1}{2}\E\int_{0}^T\int_{\mathcal{M}_0}s^3\lambda^4\xi^3r^2|z_\epsilon|^2\,dt\\
    +\frac{1}{2}\E\int_{Q}&s^2\lambda^2\xi^3r^2|Z_{\epsilon}|^2\,dt\leq  \frac{1}{2(h\epsilon)^2}\E\left.\int_{\mathcal{M}}e^{2s(T)\varphi}|y_{\epsilon}|^2\right|_{t=T}\\
    &+\frac{ C}{2}\left(\left.\E\int_{\mathcal{M}}\tau^{-2}\lambda^{-3}e^{-2\lambda(6m+1)}e^{-4\tau\varphi}|y_{\epsilon}|^{2}\right|_{t=0}+\E\int_{Q}s^{-3}\lambda^{-4}\xi^{-3}\rho^2|v|^2\,dt\right).
    \end{aligned}
 \end{equation}
 
 \subsection{Weighted energy estimate for $D_{i}y_{\epsilon}$} 
 The task is now to find an appropriate uniform bound for $D_iy_{\epsilon}$, which will be achieved by performing a weight estimate for equation \eqref{EQ:LFSPE}.

 To begin, we apply It\^{o} formula to the process $s^{-2}\lambda^{-2}\xi^{-3}\rho^2|y_{\epsilon}|^2$. This yields
 \begin{equation}\label{eq1:boundD_iy}
 \begin{aligned}
   & \E\left.\int_{\mathcal{M}}(\delta T)^{-2m}\tau^{-2}\lambda^{-2}\xi^{-3}e^{2s(T) \varphi}|y_{\epsilon}|^2\right|_{t=T}-\E\left.\int_{\mathcal{M}}(2\tau)^{-2}\lambda^{-2}\xi^{-3}e^{4\tau \varphi}|y_{\epsilon}|^2\right|_{t=0}=\\
    &\E\int_{Q}\partial_t(s^{-2}\rho^2)\lambda^{-2}\xi^{-3}|y_{\epsilon}|^2\,dt+2\E\int_{Q}s^{-2}\lambda^{-2}\xi^{-3}\rho^{2}y_{\epsilon} dy_{\epsilon}+\E\int_{Q}s^{-2}\lambda^{-2}\xi^{-3}\rho^2|dy_{\epsilon}|^2.
\end{aligned}
\end{equation}
Recalling that $y_{\epsilon}$ satisfies \eqref{EQ:LFSPE}, we can rewrite the second term on the above equation as
\begin{align*}
    2\E\int_{Q}s^{-2}\lambda^{-2}\xi^{-3}\rho^2&y_{\epsilon}dy_{\epsilon}=2\sum_{i=1}^{n}\E\int_{Q}s^{-2}\lambda^{-2}\xi^{-3}\rho^2y_{\epsilon}[D_i(\gamma_iD_iy_\epsilon)+a_{1i}A_iD_i(y_{\epsilon})]\,dt\\
    &+2\E\int_{Q}a_{2}s^{-2}\lambda^{-2}\xi^{-3}\rho^2|y_{\epsilon}|^2\,dt+2\E\int_{Q}s^{-2}\lambda^{-2}\xi^{-3}\rho^2y_{\epsilon}v\,dt\\
    &+2\E\int_{0}^T\int_{\mathcal{M}_0}s^{-2}\lambda^{-2}\xi^{-3}\rho^2y_\epsilon u_{\epsilon}\,dt.
\end{align*}
Using integration by parts with respect to differential operator with the condition $y_\epsilon=0$ on $\partial Q$, and the discrete product rule,we can assert that
\begin{align*}
    2\E\int_{Q}s^{-2}\lambda^{-2}&\xi^{-3}\rho^2y_{\epsilon}[D_i(\gamma_iD_iy_\epsilon)+a_{1i}A_iD_i(y_{\epsilon})]dt\\
    =&-2\E\int_{Q_i^{*}}s^{-2}\lambda^{-2}\left[A_i(\xi^{-3}\rho^2)\gamma_i|D_iy_{\epsilon}|^2+\frac{1}{2}A_i(a_{1i}\xi^{-3}\rho^2)D_i(|y_\epsilon|^2)\right]dt\\
    &-2\E\int_{Q_i^{*}}s^{-2}\lambda^{-2}\left[\frac{1}{2}D_i(\xi^{-3}\rho^2)\gamma_iD_i(|y_{\epsilon}|^2)+D_i(a_{1i}\xi^{-3}\rho^2)|A_iy_\epsilon|^2\right]dt.
\end{align*}
Taking account that $r^2A_i(\xi^{-3}\rho^2)=\xi^{-3}+\mathcal{O}((sh)^{2})$, applying integration by parts with respect to differential operator on the above equation and combining these equalities, we can rewrite \eqref{eq1:boundD_iy} as

\begin{equation}\label{eq2:boundDiy}
    \begin{aligned}
        2&\sum_{i=1}^{n}\E\int_{Q_i^{*}}s^{-2}\lambda^{-2}(\xi^{-3}\rho^2+\mathcal{O}((sh)^{2})\gamma_i|D_iy_\epsilon|^2\,dt\\
        &=\sum_{i=1}^{n}\left(\E\int_{Q}s^{-2}\lambda^{-2}D_iA_i(a_{1i}\xi^{-3}\rho^2)|y_\epsilon|^2\,dt+\E\int_{Q}s^{-2}\lambda^{-2}D_i(D_i(\xi^{-3}\rho^2)\gamma_i)|y_{\epsilon}|^2\;dt\right.\\
    &\left.-\E\int_{Q_i^{*}}D_i(a_{1i}\xi^{-3}\rho^2)|A_iy_\epsilon|^2\,dt\right)+2\E\int_{Q}a_{2}s^{-2}\lambda^{-2}\xi^{-3}\rho^2|y_{\epsilon}|^2\,dt\\
    &+2\E\int_{Q}s^{-2}\lambda^{-2}\xi^{-3}\rho^2y_{\epsilon}v\,dt+2\E\int_{0}^T\int_{\mathcal{M}_0}s^{-2}\lambda^{-2}\xi^{-3}\rho^2y_\epsilon u_{\epsilon}\,dt\\
    &-\E\left.\int_{\mathcal{M}}(\delta T)^{-2m}\tau^{-2}\lambda^{-2}\xi^{-3}e^{2s(T) \varphi}|y_{\epsilon}|^2\right|_{t=T}+\E\left.\int_{\mathcal{M}}(2\tau)^{-2}\lambda^{-2}\xi^{-3}e^{4\tau \varphi}|y_{\epsilon}|^2\right|_{t=0}\\
    &+\E\int_{Q}\partial_t(s^{-2}\rho^2)\lambda^{-2}\xi^{-3}|y_{\epsilon}|^2\,dt+\E\int_{Q}s^{-2}\lambda^{-2}\xi^{-3}\rho^2|U_\epsilon|^2\,dt.
    \end{aligned}
\end{equation}
where used that $y_\epsilon=0$ on $\partial Q$ again. We now proceed to find a upper bound for the first three terms on the right-hand side of the previous equation. Using that $|r^2D_i(a_{1i}\xi^{-3}\rho^2)|\leq  Cs\lambda\xi^{-2}$, $|r^2D_iA_i(a_{1i}\xi^{-3}\rho^2)|\leq  Cs\lambda\xi^{-2}$ and $|r^2D_i(\gamma_iD_i(\xi^{-3}\rho^2))|\leq  Cs^2\lambda^2\xi^{-1}$, we have
\begin{align*}
    \E\int_{Q}s^{-2}\lambda^{-2}D_iA_i(a_{1i}\xi^{-3}\rho^2)|y_\epsilon|^2\,dt+&\E\int_{Q}s^{-2}\lambda^{-2}D_i(D_i(\xi^{-3}\rho^2)\gamma_i)|y_{\epsilon}|^2\;dt\\
    -\E\int_{Q_i^{*}}D_i(a_{1i}\xi^{-3}\rho^2)|A_iy_\epsilon|^2\,dt\leq& C\left(\E\int_{Q}(s^{-1}\lambda^{-1}\xi^{-2}\rho^2+\xi^{-1}\rho^2)|y_{\epsilon}|^2\,dt\right.\\
    &\left.+\E\int_{Q_i^{\ast}}s^{-1}\lambda^{-1}\xi^{-2}\rho^2|A_iy_\epsilon|^2\right).
\end{align*}
Moreover, using the fact $|A_iy_{\epsilon}|^2\leq A_i|y_{\epsilon}|^2$, integration by parts with respect to average operator and the condition that $y_{\epsilon}=0$ on $\partial Q$, we get
\begin{align*}
    \E\int_{Q}s^{-2}\lambda^{-2}D_iA_i(a_{1i}\xi^{-3}\rho^2)|y_\epsilon|^2&\,dt+\E\int_{Q}s^{-2}\lambda^{-2}D_i(D_i(\xi^{-3}\rho^2)\gamma_i)|y_{\epsilon}|^2\;dt\\
    -\E\int_{Q_i^{*}}D_i(a_{1i}\xi^{-3}\rho^2)|A_iy_\epsilon|^2\,dt\leq C&\left(\E\int_{Q}(s^{-1}\lambda^{-1}\xi^{-2}+\xi^{-1}+\mathcal{O}((sh)^2))\rho^2|y_{\epsilon}|^2\,dt\right),
\end{align*}
where used that $r^2A_i(s^{-1}\lambda^{-1}\xi^{-2}\rho^2)=s^{-1}\lambda^{-1}\xi^{-2}+\mathcal{O}((sh)^2)$. Substituting the above inequality into \eqref{eq2:boundDiy} and applying the Young inequality on fourth and fifth on the right side-hand of \eqref{eq2:boundDiy}, we have

\begin{equation}\label{eq3:boundDiy}
    \begin{aligned}
        &\sum_{i=1}^{n}\int_{Q_i^{*}}s^{-2}\lambda^{-2}(\xi^{-3}+\mathcal{O}((sh)^{2})\gamma_i\rho^2|D_iy_\epsilon|^2\,dt\\
    &\leq C\left(\E\int_{Q}(s^{-1}\lambda^{-1}\xi^{-2}+\xi^{-1}+\mathcal{O}((sh)^2)+s^{-2}\lambda^{-2}\xi^{-3})\rho^2|y_{\epsilon}|^2\,dt\right)\\
    &+2\E\int_{Q}\xi^{-3}\rho^2|y_\epsilon|^2\;dt+\E\int_{Q}s^{-4}\lambda^{-4}\xi^{-6}\rho^2|v|^2\,dt\\
    &+\E\int_{0}^T\int_{\mathcal{M}_0}s^{-4}\lambda^{-4}\xi^{-6}\rho^2| u_{\epsilon}|^2\,dt+\E\left.\int_{\mathcal{M}}(2\tau)^{-2}\lambda^{-2}\xi^{-3}e^{4\tau \varphi}|y_{\epsilon}|^2\right|_{t=0}\\
    &+\E\int_{Q}\partial_t(s^{-2}\rho^2)\lambda^{-2}\xi^{-3}|y_{\epsilon}|^2\,dt+\E\int_{Q}s^{-2}\lambda^{-2}\xi^{-3}\rho^2|U_\epsilon|^2\,dt.
    \end{aligned}
\end{equation}
Now, we focus on the penultimate term on the right-hand side of the above inequality. Note that $1\leq \theta\leq 2$ on $[0,T/4]$, $\varphi$ is a negative function over $Q$, and $\theta_{t}=-4T^{-1}\theta ^{-1}s\lambda^2\sigma(1-4tT^{-1})^{\sigma-1}e^{\lambda(6m-4)}\leq 0$, we have
\begin{align*}
\E\int_{0}^{T/4}\int_{\mathcal{M}}&\partial_t(s^{-2}\rho^2)\lambda^{-2}\xi^{-3}|y_{\epsilon}|^2\,dt\\
&=\E\int_{0}^{T/4}\int_{\mathcal{M}}\frac{\theta_t}{\theta}\left(-2s^{-2}\rho^2-2s^{-2}\lambda\rho^2\varphi\right)\lambda^{-2}\xi^{-3}|y_{\epsilon}|^2\,dt\\
    \leq& C\E\int_0^{T/4}\int_{\mathcal{M}} s^{-1}\xi^{-3}\rho^2|y_{\epsilon}|^2\,dt\leq  C\E\int_{Q}\rho^2|y_{\epsilon}|^2\,dt.
\end{align*}
On the other hand, using the fact of $|\theta_t|\leq C|\theta|^2$ for all $t\in [T/4,T]$, one can obtain that
\begin{align*}
    \left|\E\int_{T/4}^{T}\int_{\mathcal{M}}\partial_t(s^{-2}\rho^2)\lambda^{-2}\xi^{-3}|y_{\epsilon}|^2\,dt\right|\\
    \leq \E\int_{T/4}^{T}\int_{\mathcal{M}}\frac{2|\theta_t|}{\theta}&\left|s^{-2}\rho^2-2s^{-2}\lambda\rho^2\varphi\right|\lambda^{-2}\xi^{-3}|y_{\epsilon}|^2\,dt\\
    \leq C\E\int_{T/4}^{T}&\int_{\mathcal{M}} s^{-2}\lambda^{-1}\xi^{-3}\rho^2|y_{\epsilon}|^2\,dt\leq  C\E\int_{Q}\rho^2|y_{\epsilon}|^2\,dt.
\end{align*}
Therefore, 
\begin{equation}\label{eq:estimatiationpt2}
\int_{Q}\partial_t(s^{-2}\rho^2)\lambda^{-2}\xi^{-3}|y_{\epsilon}|^2\,dt\leq  C\int_{Q}\rho^2|y_{\epsilon}|^2\,dt.
\end{equation}
Combining \eqref{eq:estimatiationpt2} with \eqref{eq3:boundDiy}, yields
\begin{equation}\label{eq4:boundDiy}
    \begin{aligned}
        \sum_{i=1}^{n}&\E\int_{Q_i^{*}}s^{-2}\lambda^{-2}(\xi^{-3}+\mathcal{O}((sh)^{2})\gamma_i\rho^2|D_iy_\epsilon|^2\,dt\\
    &\leq C\E\int_{Q}(s^{-1}\lambda^{-1}\xi^{-2}+\xi^{-1}+\mathcal{O}((sh)^2)+s^{-2}\lambda^{-2}\xi^{-3}+1)\rho^2|y_{\epsilon}|^2\,dt\\
    &+\E\int_{Q}s^{-4}\lambda^{-4}\xi^{-6}\rho^2|v|^2\,dt+\E\int_{0}^T\int_{\mathcal{M}_0}s^{-4}\lambda^{-4}\xi^{-6}\rho^2| u_{\epsilon}|^2\,dt\\
    &+\E\left.\int_{\mathcal{M}}(2\tau)^{-2}\lambda^{-2}\xi^{-3}e^{4\tau \varphi}|y_{\epsilon}|^2\right|_{t=0}+\E\int_{Q}s^{-2}\lambda^{-2}\xi^{-3}\rho^2|U_\epsilon|^2\,dt.
    \end{aligned}
\end{equation}
From the fact $\mathcal{O}((sh))^2\leq \varepsilon_0$, $\xi,\tau,\lambda>1$, it follows that
\begin{equation}
     \begin{aligned}
        \sum_{i=1}^{n}&\E\int_{Q_i^{*}}s^{-2}\lambda^{-2}\xi^{-3}\rho^2|D_iy_\epsilon|^2\,dt\\
    &\leq C\left(\E\int_{Q}\rho^2|y_{\epsilon}|^2\,dt+\E\int_{Q}s^{-4}\lambda^{-4}\xi^{-3}\rho^2|v|^{2}dt+\E\int_{0}^T\int_{\mathcal{M}_0}s^{-4}\lambda^{-4}\xi^{-3}\rho^2| u_{\epsilon}|^{2}dt\right.\\
    &\left.+\E\left.\int_{\mathcal{M}}\tau^{-2}\lambda^{-2}\xi^{-3}e^{4\tau \varphi}|y_{\epsilon}|^2\right|_{t=0}+\E\int_{Q}s^{-2}\lambda^{-2}\xi^{-3}\rho^2|U_\epsilon|^2\,dt\right).
    \end{aligned}
\end{equation}
The definition of controls \eqref{eq:formcontrols}, implies that
\begin{equation}\label{eq4:bound_Diy}
     \begin{aligned}
        \sum_{i=1}^{n}&\E\int_{Q_i^{*}}s^{-2}\lambda^{-2}\xi^{-3}\rho^2|D_iy_\epsilon|^2\,dt\\
    &\leq C\left(\E\int_{Q}\rho^2|y_{\epsilon}|^2\,dt+\E\int_{0}^T\int_{\mathcal{M}_0}s^{2}\lambda^{4}\xi^{3}r^2| z_\epsilon|^2\,dt+\E\int_{Q}s^{2}\lambda^{2}\xi^{3}r^2|Z_\epsilon|^2\,dt\right.\\
    &\left.+\E\left.\int_{\mathcal{M}}\tau^{-2}\lambda^{-2}\xi^{-3}e^{4\tau \varphi}|y_{\epsilon}|^2\right|_{t=0}+\E\int_{Q}s^{-4}\lambda^{-4}\xi^{-3}\rho^2|v|^2\,dt\right).
    \end{aligned}
\end{equation}
Therefore, 
\begin{equation}\label{eq5:boundDiy}
     \begin{aligned}
        \sum_{i=1}^{n}\E\int_{Q_i^{*}}s^{-2}\lambda^{-2}&\xi^{-3}\rho^2|D_iy_\epsilon|^2\,dt\leq C\left(\frac{1}{(h\epsilon)^2}\E\left.\int_{\mathcal{M}}e^{2s(T)\varphi}|y_\epsilon|^2\right|_{t=T}\right.\\
        &\left.+\E\left.\int_{\mathcal{M}}\tau^{-2}\lambda^{-2}\xi^{-3}e^{4\tau \varphi}|y_{\epsilon}|^2\right|_{t=0}+\E\int_{Q}s^{-4}\lambda^{-4}\xi^{-3}\rho^2|v|^2\,dt\right),
    \end{aligned}
\end{equation}
the last inequality being a consequence of the comparison of \eqref{eq4:bound_Diy} with \eqref{eq3:bound(u,U,z)}. 
\subsection{Combining the previous estimates}
Combining the estimates \eqref{eq3:bound(u,U,z)} with \eqref{eq5:boundDiy}, we get
\begin{equation}\label{eq6:bound(u,U,z)}
\begin{split}
&\E\int_{\mathcal{M}}\left(\frac{1}{\epsilon}-\frac{ C}{2(h\epsilon)^{2}}e^{2s(T)\varphi}\right)|y_{\epsilon}(T)|^2+\sum_{i=1}^{n}\E\int_{Q_i^{\ast}}s^{-2}\lambda^{-2}\xi^{-3}\rho^2|D_iy_{\epsilon}|^2\,dt\\
&+\E\int_{Q}\rho^2|y_{\epsilon}|^2\,dt+\E\int_{0}^T\int_{\mathcal{O}_0\cap\mathcal{M}}s^{-3}\lambda^{-4}\xi^{-3}\rho^{2}|u_{\epsilon}|^2\,dt+\E\int_{Q}s^{-2}\lambda^{-2}\xi^{-3}\rho^2|U_{\epsilon}^2|\,dt\\
\leq& C\left(\E\int_{Q}s^3\lambda^4\xi^3r^2|z|^2\,dt+\E\int_{Q}s^2\lambda^2\xi^3r^2|Z|^2\,dt\right.\\
&\left.+\E\int_{\mathcal{M}}\tau^{-2}\lambda^{-3}e^{-2\lambda(6m+1)}e^{-4\tau\varphi}|y_{\epsilon}(0)|^2\right)        
    \end{split}
\end{equation}
Now, we are going to focus on the first term of the previous equation: Notice that $\varphi\leq -(\lambda -1)e^{6\lambda (m+1)}$ and taking $\epsilon:=\mathcal{E}_{\lambda,h}= Ch^{-2}e^{-2s(T)(\lambda-1)e^{6\lambda(m+1)}}$, we have the following. 
\begin{equation*}
    \begin{split}
        \E\int_{\mathcal{M}}\bigg(\frac{1}{\epsilon}-\frac{ C}{2(h\epsilon)^2}e^{2s(T)\varphi}\bigg)|y_{\epsilon}(T)|^2\geq \frac{1}{2\mathcal{E}_{\lambda,h}}\E\int_{\mathcal{M}}|y_{\epsilon}(T)|^2.
    \end{split}
\end{equation*}
Therefore, combining the above inequality and the inequality \eqref{eq6:bound(u,U,z)}, we can conclude the desired result.
\section{Controllability of semidiscrete forward semilinear SPDE (Theorem \ref{theo:mainresult})}\label{sec:nullsemilinear}
Consider the following subspace 
\begin{align*}
    \mathcal{D}_{\tau,\lambda}:=\left\{v\in L^2_{\mathbb{F}}(0,T;L_{h}^2(\mathcal{M}))\mid\|v\|^{2}_{\mathcal{D}_{\tau,\lambda}}:= \E\int_{Q}s^{-3}\lambda^{-4}\xi^{-3}\rho^2|v|^2<\infty\right\},
\end{align*}
which is Banach with the canonical norm. Given $v\in L^2_{\mathds{F}}(0,T;L_{h}^2(\mathcal{M})$, we consider the following controlled system
\begin{equation}\label{eq:forwardfixed}
    \left\{\begin{array}{lr}
        \mathcal{P}y=(v+\mathbbm{1}_{\mathcal{O}_0}u)\,dt+U\,dB(t)\text{ in }Q&\\
        y=0\text{ on }\partial Q, \quad\quad y(0)=y_0 \text{ in }\mathcal{M}.
    \end{array}\right.
\end{equation}
Notice that this system is a particular  case of \eqref{EQ:LFSPE}. Then, there exist control functions $(u,U)\in L^2_{\mathds{F}}(0,T;L_{h}^2(\mathcal{M}_0))\times L^2_{\mathds{F}}(0,T;L_{h}^2(\mathcal{M}))$ and the corresponding solution $y$ to \eqref{eq:forwardfixed} satisfying \eqref{eq:boundyz(T)}-\eqref{eq:estimateCarlmenz} follow from Theorem \ref{lemma:theonullauxiliar}.

For each $v\in\mathcal{D}_{\tau,\lambda}$, let $(y_{v,}u_{v},U_{v})$ be the optimal triple for the minimization problem \eqref{eq:ProblemMinimization} applied to system \eqref{eq:forwardfixed} with source $v$, as given by Theorem \ref{lemma:theonullauxiliar}. Define $\mathcal{G}:\mathcal{D}_{\tau,\lambda}\rightarrow L^{2}_{\mathcal{F}}(0,T;L^{2}_{h}(\mathcal{M}))$ by  $\mathcal{G}(v):=F_{1}(\omega,t,x,y_{v},\nabla_{h}y_{v})$.\\
Let us first prove that if $v\in\mathcal{D}_{\tau,\lambda}$ then $\mathcal{D}_{\tau,\lambda}(v)\in\mathcal{D}_{\tau,\lambda}$. Indeed, by assumption (A2), the Lipschitz condition gives
\begin{equation}\label{ine:G:estimate}
\begin{aligned}
    \|\mathcal{G}(v)\|_{\mathcal{D}_{\tau,\lambda}}^2=& \E\int_{Q}s^{-3}\lambda^{-4}\xi^{-3}\rho^2|F_{1}(w,t,x,y_{v},\nabla_{h}y_{v})|^2\,dt\\
    \leq&2L^{2}_1\E\int_{Q}s^{-3}\lambda^{-4}\xi^{-3}\rho^2\left(|y_{v}|^2+\sum_{i=1}^{n}|A_iD_iy_{v}|^2\right)\,dt
\end{aligned}
\end{equation}
Let us focus on the integral with term $|y_{v}|^{2}$. Since $s^{-2}\lambda^{-3}\xi^{-3}\leq 1$ and $s^{-1}\lambda^{-2}\leq \tau^{-1}\lambda^{-2}$ for $s,\xi\geq 1$ we have
\begin{equation}\label{ine:estimate:yv}
	\begin{aligned}
&	\E\int_{Q}s^{-3}\lambda^{-4}\xi^{-3}\rho^{2}|y_{v}|^{2}dt\leq \E\int_{Q}\rho^{2}|y_{v}|^{2}dt\\
	&\leq C\tau^{-1}\lambda^{-1}\left( \E\int_{Q}s^{-3}\lambda^{-4}\xi^{-3}\rho^{2}|v|^{2}dt+\int_{\mathcal{M}}\tau^{-1}\lambda^{-2}e^{-2\lambda(6m+1)}e^{-4\tau\varphi}|y_{0}|^{2}dt\right).
	\end{aligned}
	\end{equation}
	where the last line follows from \eqref{eq:estimateCarlmenz}.\\
In turn, applying $|A_iD_iy_{v}|^2\leq A_i|D_iy_{v}|^2$ and an integration by parts with respect to the average operator we have
\begin{equation}
\begin{aligned}
&\E\int_{Q} s^{-3}\lambda^{-4}\xi^{-3}\rho^{2}|A_{i}D_{i}y_{v}|^{2}dt\leq\E\int_{Q} s^{-3}\lambda^{-4}\xi^{-3}\rho^{2}A_{i}(|D_{i}y_{v}|^{2})dt\\
&\leq\E\int_{Q_i^{\ast}}s^{-3}\lambda^{-4}A_i(\xi^{-3}\rho^2)|D_iy_{v}|^2\,dt-\frac{h}{4}\int_{\partial_i Q}s^{-3}\lambda^{-4}\xi^{-3}\rho^2t_{r}^{i}(|D_iy_{v}|^2)\,dt\\
&\leq\E\int_{Q_i^{\ast}}s^{-3}\lambda^{-4}A_i(\xi^{-3}\rho^2)|D_iy_{v}|^2\,dt,
\end{aligned}
\end{equation}
where in the last line we have dropped the boundary integral since is negative.
Now, from the asymptotic expansion $r^2A_{i}(s^{-3}\lambda^{-4}\xi^{-3}\rho^2)=s^{-3}\lambda^{-4}\xi^{-3}+\mathcal{O}_{\lambda}((sh)^2)$, see \cite{AA:perez:2024},  it follows that $A_{i}(s^{-3}\lambda^{-4}\xi^{-3}\rho^2)=s^{-3}\lambda^{-4}\xi^{-3}\rho^{2}(1+\mathcal{O}_{\lambda}((sh)^{2}))\leq Cs^{-3}\lambda^{-4}\xi^{-3}\rho^{2}$ provided $sh<1$. Thus, using \eqref{eq:estimateCarlmenz}, we get
\begin{equation}\label{ine:estimate:Dv}
	\begin{aligned}
	&\E\int_{Q} s^{-3}\lambda^{-4}\xi^{-3}\rho^{2}|A_{i}D_{i}y_{v}|^{2}dt\leq \tau^{-1}\lambda^{-2}\E\int_{Q} s^{-2}\lambda^{-2}\xi^{-3}\rho^{2}|D_{i}y_{v}|^{2}dt\\
	&\leq C\tau^{-1}\lambda^{-2}\left(\E\int_{Q}s^{-2}\lambda^{-3}\rho^{2}|v|^{2}+\E\int_{\mathcal{M}}\tau^{-1}\lambda^{-1}e^{-2\lambda(6m+1)}e^{-4\tau\varphi}|y_{0}|\right)
	\end{aligned}
	\end{equation}
	Combing \eqref{ine:estimate:yv} and \eqref{ine:estimate:Dv} in the right-hand side of \eqref{ine:G:estimate} we obtain
	\begin{equation}\label{ine:G:mapping}
		\begin{aligned}
			    \|\mathcal{G}(v)\|_{\mathcal{D}_{\tau,\lambda}}^2&\leq 2L^{2}_{1}C\tau^{-1}\lambda^{-2}\left( \|v\|^{2}_{\mathcal{D}_{\tau,\lambda}}+\E\int_{\mathcal{M}}\tau^{-1}\lambda^{-2}e^{-2\lambda(6m+1)}e^{-4\tau\varphi}|y_{0}|^{2}\right)\\
			    &<\infty,
			\end{aligned}
		\end{equation}
which proves that $\mathcal{G}(v)\in \mathcal{D}_{\tau,\lambda}$.\\
Our next task is to prove that map $\mathcal{G}$ is a contration. Let $v_{1},v_{2}\in\mathcal{D}_{\tau,\lambda}$ with solutions $y_{1},y_{2}$ and controls $(u_{1},U_{1})$, $(u_{2},U_{2})$. Set $\overline{y}:=y_{1}-y_{2}$, $\overline{v}:=v_{1}-v_{2}$, $\overline{u}:=u_{1}-y_{2}$, and $\overline{U}:=U_{2}-U_{1}$. By linearity $\overline{y}$ verifyies
\begin{equation}
	\mathcal{P}\overline{y}=(\overline{v}+\mathbbm{1}_{\mathcal{M}_{0}})dt+\overline{U}dB(t),\quad \overline{y}=0 \text{ on }\partial Q, \quad \overline{y}(0)=0.
	\end{equation} 
Thanks to assumption (A2) it follows that
\begin{align*}
    \|\mathcal{G}(v_{1})-\mathcal{G}(v_{2})\|^{2}_{\mathcal{D}_{\tau,\lambda}}=&\E\int_{Q}s^{-3}\lambda^{-4}\xi^{-3}\rho^2|F_{1}(\cdot,y_{1},\nabla_{h}y_{1})-F_{1}(\cdot,y_{2},\nabla_{h}y_{2})|^2\,dt\\
    \leq&2L_{1}^{2}\E\int_{Q}s^{-3}\lambda^{-4}\xi^{-3}\rho^2\left(|\overline{y}|^2+\sum_{i=1}^{n}|A_iD_i\overline{y}|^2\right)\,dt
\end{align*}
Then, using the same argument employed to show \eqref{ine:G:mapping} since the right-hand side has the same structure, we obtain 
\begin{align*}
   \|\mathcal{G}v_{1}-\mathcal{G}v_{2}\|^{2}_{\mathcal{D}_{\tau,\lambda}}&\leq 2L_{1}^{2}C\tau^{-1}\lambda^{-2}\|v_{1}-v_2\|_{\mathcal{D}_{\tau,\lambda}}^{2}. 
\end{align*}
Thus, by choosing $\tau>1$ sufficiently large  such that $\tau^{1/2}\lambda>\sqrt{2C}L_{1}$ we deduce that $\mathcal{G}$ is a contraction mapping.\\
Hence, by the Banach contraction mapping Theorem there exists a unique $\hat{v}\in\mathcal{D}_{\tau,\lambda}$ such that $\mathcal{G}(\hat{v})=\hat{v}$. This fixed point satisfies $\hat{v}=F_{1}(\cdot,\hat{y},\nabla_{h}\hat{y})$, where $\hat{y}$ solves \eqref{eq:forwardfixed}. Therefore $\hat{y}$ solves the reduced semilinear system \eqref{system:nonlinear}. Moreover, using \eqref{ine:G:mapping} with $v=\hat{v}$ 
\begin{equation}
	\| \hat{v}\|^{2}_{\mathcal{D}_{\tau,\lambda}}\leq 2CL_{1}^{2}\tau^{-1}\lambda^{-2}\left( \|\hat{v}\|^{2}_{\mathcal{D}_{\tau,\lambda}}+\E\int_{\mathcal{M}}\tau^{-1}\lambda^{-2}e^{-2\lambda(6m+1)}e^{-4\tau\varphi}|y_{0}|^{2} \right).
	\end{equation}
Notice that $2CL_{1}^{2}\tau^{-1}\lambda^{2}<1$ due to the contraction condition. Therefore
\begin{equation}\label{ine:bounded:control}
	\| \hat{v}\|^{2}_{\mathcal{D}_{\tau,\lambda}}\leq C\E\int_{\mathcal{M}}e^{-4\tau\varphi}|y_{0}|^{2}dt.
	\end{equation}
Moreover, \eqref{eq:boundyz(T)} from Theorem \ref{lemma:theonullauxiliar} also gives  
\begin{equation}\label{eq:inequality y(t)}
    \begin{split}
        \E \int_{\mathcal{M}}|\hat{y}(T)|^2\leq& \mathcal{E}_{\lambda,h}\bigg(\|v\|^{2}_{\mathcal{D}_{\tau,\lambda}}+\E\int_{\mathcal{M}}\tau^{-1}\lambda^{-2}e^{-2\lambda(6m+1)}e^{-4\tau\varphi}|y_0|^2\bigg)\\
        \leq &\mathcal{E}_{\lambda,h}\E\int_{\mathcal{M}}e^{-4\tau\varphi}|y_{0}|^{2}dt.
    \end{split}
\end{equation}
where in the last line we have used \eqref{ine:bounded:control} and part of the weighted factor is less than one. Then, from the definition of $\varphi$ we have $e^{-4\tau\varphi}\leq e^{C\tau}$ with $C=C(\lambda)$. Moreover, recalling that $\mathcal{E}_{\lambda,h}:=C h^{-2}e^{-2s(T)(\lambda-1)e^{6\lambda(m+1)}}$ with $s(T)=\tau(\delta T)^{-m}$. It follows that 
\begin{equation*}
\mathcal{E}_{\lambda,h}e^{C\lambda}=Ch^{-2}\exp\left( 2\tau[C/2-(\delta T)^{-m}(\lambda-1)e^{6\lambda(m+1)}]\right)
\end{equation*}
Now, we choose $\delta_{0}>0$ small enough such that for $0<\delta<\delta_{0}$ we have $C/2-(\delta T)^{-m}(\lambda-1)e^{6\lambda(m+1)}\leq -C'(\delta T)^{-m}$. Thus, for $\delta <\delta_{0}$ 
\begin{equation}
\mathcal{E}_{\lambda,h}e^{C\lambda}\leq Ch^{-2}e^{-C'\tau(\delta T)^{m}}
\end{equation}
Our last task is to connect $h$ with $\delta$. Notice that setting $h_{1}:= \epsilon_{0}(\delta_{0}T)^{m}/\tau$ holds $\tau(\delta_{0}T)^{-m}h_{1}=\epsilon_{0}$. Then, for $h\leq \min\{h_{0},h_{1}\}$ we set $\delta:=\left( \frac{h}{h_{1}}\right)^{1/m}\delta_{0}$ which verifies $\delta\leq \delta_{0}$, $\tau(\delta T)^{-m}h=\epsilon_{0}$ and $(\delta T)^{-m}=\frac{\epsilon_{0}}{\tau h}$. These conditions, applied in \eqref{eq:inequality y(t)}, allows us to obtain 
\begin{equation*}
     \E \int_{\mathcal{M}}|\hat{y}(T)|^2
     \leq \frac{ C}{h^2}e^{-C\epsilon_{0}/h]}\E\int_{\mathcal{M}}|y_0|^2,
\end{equation*}
Since $\lim_{h\rightarrow 0^{+}}h^{-2}e^{-1/h}=0$, the polynomial factor $h^{-2}$ can be absorbed. Therefore
\begin{equation*}
    \mathbb{E} \int_{\mathcal{M}} |\hat{y}(T)|^2 
    \leq  C e^{-C/h} 
    \mathbb{E} \int_{\mathcal{M}} |y_0|^2,
\end{equation*}
which completes the proof of Theorem \ref{theo:mainresult}.
\section{Comments and concluding remarks}\label{sec:comments}
In this work we have established that for any function $\phi:(0,\infty)\rightarrow(0,\infty)$ satisfying $\lim_{h\rightarrow 0}\phi(h)=0$ and $\liminf_{h\rightarrow 0}\phi(h)/e^{-\kappa h^{-1}}>0$, there exist uniformly control $(u,U)$ such that the solution \eqref{system:nonlinear} satisfies
\begin{equation*}
    \E\int_{\mathcal{M}}|y(T)|^{2}\leq C\phi(h)\E \int_{\mathcal{M}}|y_{0}|^{2},
\end{equation*}
where the constants $C,\kappa>0$ are independent of $h$ and $y_{0}$. The strategy used in this work could be applied to study similar system or similar related controllability results. Let us describe two possible future direction. \\
A natural next step is the fully discrete case, where time is also discretized, for instance an implicit Euler scheme with step 
\begin{equation*}
\frac{y^{k+1} - y^k}{\Delta t}
-
\sum_{i=1}^n D_i\bigl(\gamma_i D_i y^{k+1}\bigr)
=
F_1(y^k,\nabla_h y^k) + \mathbbm{1}_{\mathcal{M}_0} u^k + \frac{\Delta B_k}{\Delta t} U^k,
\end{equation*}
with $\Delta B_k = B(t_{k+1}) - B(t_k) \sim \mathcal{N}(0,\Delta t)$.

In the deterministic setting, \cite{BHLR:2011} established $\phi$-null controllability assuming a partial Lebeau-Robbiano inequality, by proving fully discrete Carleman estimate \cite{GC-HS-2021} and \cite{LMPZ-2023} studied the one-dimensional case with Dirichlet and dynamic boundary condition, respectively. In arbitrary dimension, $\phi$-null control result for fully discrete parabolic operators is obtained in \cite{AA:perez:2024}. However, the fully discrete stochastic case remains open. A possible strategy could be to mimic the penalized variational approach applied in this work. To this end, it a first task should be to prove a fully discrete Carleman estimate for the corresponding fully discrete backward stochastic system. As is reported in the deterministic case \cite{GC-HS-2021, LMPZ-2023} and \cite{AA:perez:2024} a CFL-type condition is expected. \\
Another future direction could be to consider system \eqref{eq:systemnonlinear}, but with initial data
\begin{equation*}
y_0 + \sigma \hat{y}_0    
\end{equation*}
where $\hat{y}_0$ is unknown, and to study the existence of the existence of a control pair $(u,U)$ such that the observation functional
\begin{equation*}
\Phi_\sigma := \mathbb{E} \int_0^T \int_O |y_\sigma|^2 \, dt,    
\end{equation*}
where $O \subset \mathcal{M}$ is an observation region, verifies
\begin{equation*}
\left.\frac{\partial \Phi_\sigma}{\partial \sigma}\right|_{\sigma=0} = 0    
\end{equation*}
for all $\hat{y}_0$, this is known as insensitizing controllability. As shown in the deterministic continuous setting \cite{DT:2000}, this reduces to null controllability of the backward component in a forward--backward cascade, under a geometric condition of the control and observation regions. Since in \cite{LPP:2025} is given an example that the null-controllability is false in general is not expected to obtain this type of properties. However, it could be possible to study a relaxed insesitizing controllability as is proved in the semi-discrete deterministic case \cite{BHSDT:2019}. The known strategy uses Carleman estimates for the Forward and Backward system, and since in \cite{LPP:2026} are obtained Carleman estimates for Forward semi-discrete stochastic parabolic operator, it could be possible to extend into the semi-discrete stochastic framework insensitizing controllability results at least for the linear case.

\section{Acknowledgment}
 R. Lecaros was partially supported by FONDECYT (Chile) Grant 1260574. A. A. P\'erez acknowledges the support of Vicerrector\'ia de Investigaci\'on y postgrado, Universidad del B\'io-B\'io, project IN2450902 and FONDECYT Grant 11250805. M. F. Prado gratefully acknowledges the support from the Institutional Scholarship Fund of the University of Valpara\'iso (FIB-UV), proyecto interno PI LIR 25 14 and Programa de Iniciación a la Investigación Científica N° 049/2025.
\appendix
\section{Proof of Theorem \ref{theo:Carleman}}\label{firstcarleman}

We note that $\theta(t)\in [1,2]$ for $t\in[0,T/2]$. The parameter $\delta$ is chosen so that $0<\delta<\tfrac{1}{2}$ in order to avoid singularity at time $T$, and in this case $\theta(t)\in [1,(\delta T)^{-m}]$ for $t\in [T/2,T]$. Moreover, for the first time, derivative holds $|\theta_t(t)|\leq 4T^{-1}\sigma$ for $t\in [0,T/2)$, and $|\theta_t(t)|\leq C|\theta(t)|^2$ for $t\in [T/2,T]$. Finally, for the second derivative we have $|\theta_{tt}(t)|\leq (4T^{-1})^2\sigma(\sigma-1)$ for $t\in [0,T/2)$ and $|\theta_{tt}(t)|\leq C(\theta(t))^3$ for $t\in [T/2,T]$.
\begin{proof}  For the sake of presentation, we split the proof into three steps: First, we write the conjugate operator into two parts, and an additional term $R_{h}$. (see Section \ref{sub:conjugated}). Then we estimate the cross-inner product between these operators (see Section \ref{sub:croos-product}), and as a final stage, we return to the original variable.

\subsection{Conjugated operator}\label{sub:conjugated}
For all $i=1,\ldots,n$, let us consider the functions $\gamma_{i}$ such that $\mbox{reg}(\gamma)<c_0$ and the following notation
\begin{equation*}
   \nabla_{\gamma}f:=(\sqrt{\gamma_1}D_1 f_1,\cdots,\sqrt{\gamma_n}D_nf_n)\quad\text{and}\,\quad \Delta_{\gamma}f:=\sum_{i=1}^{n}\gamma_i \partial_{x_i}^{2}f. 
\end{equation*}
\par Let $\displaystyle \tilde{\mathcal{P}}(w):=\,dw+\sum_{i=1}^{n}D_{i}\left(\gamma_iD_{i}(w)\right)dt=fdt+gdB(t)$. Building on the strategy of \cite{LPP:2025}, we have the following identity in $Q$
\begin{equation}\label{eq:opconjugate}
    r\mathcal{P}(\rho z)+M_h (z)dt=C(z)\,dt+B(z),
\end{equation}
where $C(z):= C_{1}(z)+C_{2}(z)+C_3(z)+C_4(z)+C_5(z)$, $B(z):= B_1(z)+(B_{2}(z)+B_3(z))
\,dt$ and \break $M_{h}(z):= C_4(z)+C_5(z)+B_3(z)-R_hz$. The definitions of the $C_i(z)$, $B_i(z)$ and $R_{h}z$ are given by: $\displaystyle C_{1}(z):= \sum_{i=1}^{n}rA_{i}^{2}\rho\,D_{i}(\gamma_{i}D_{i}z)$, $\displaystyle C_{2}(z):= \sum_{i=1}^{n}\gamma_{i}rD_{i}^{2}\rho A_{i}^{2}z$, $C_{3}(z):= r\partial_t(\rho) z$, \break $\displaystyle B_{1}(z):= dz$, $B_2(z):=\,2\sum_{i=1}^{n}\gamma_{i}rD_{i}A_{i}\rho D_{i}A_{i}z$,
\begin{equation*}
    \begin{split}
        R_{h}(z):=\,&\sum_{i=1}^{n}\left(h\mathcal{O}(1)rD_{i}^{2}\rho +D_{i}\gamma_{i}rA_{i}D_{i}\rho\right)A_{i}^{2}z+\sum_{i=1}^{n}h\mathcal{O}(1)rD_{i}A_{i}\rho D_{i}A_{i}z\\
        &+\sum_{i=1}^{n}\frac{h^{2}}{4}D_{i}\gamma_{i}rD_{i}^{2}\rho D_{i}A_{i}z+\sum_{i=1}^{n}\frac{h^{2}}{4}D_{i}\gamma_i \,rD_{i}A_{i}\rho D_{i}^{2}z.
    \end{split} 
\end{equation*}
and the adding terms $\displaystyle C_{4}(z):=\,\frac{h^2}{4}\sum_{i=1}^{n}D_i(\gamma_i D_i(rD_i^2\rho)A_iz)$, $\displaystyle C_5(z):=\,\frac{h^2}{4}\sum_{i=1}^{n}D_i(D_i(\gamma_irD^2_i\rho)A_iz)$, and $B_3(z):=\,-2s(\Delta_{\gamma}\varphi)\, z$.
Moreover, we can obtain the following identity
\begin{equation}\label{eq:Estirf}
    \E \int_0^T |rf|^2\,dt+\E \int_{Q}|M_{h}(z)|^2\,dt \geq 2\E\int_{Q} C(z)\,B(z).
\end{equation}
The next step is to provide an estimate for the right-hand side of \eqref{eq:Estirf}.
\begin{equation}\label{conmu}
    2\E \int_Q C(z)B(z) =2\E \sum_{i=1}^{5}\sum_{j=1}^{3}\int_Q C_{i}(z)B_{j}(z):=\,\sum_{i=1}^{5}\sum_{j=1}^{3}I_{ij}. 
\end{equation}

\subsection{An estimate for the cross-product}\label{sub:croos-product}
To obtain an estimate of the cross-product, our strategy follows \cite{LPP:2025}, where the terms are classified into three groups: those involving the differential $dz$, those involving additional terms and those involving the differential $dt$. For each case, we derive the corresponding result, which will be presented in the following. For the reader's convenience, the respective proofs are provided in Appendix \ref{missingterms} or omitted when the modifications with respect to \cite{LPP:2025} are not substantial.

Compared to \cite{LPP:2025}, the weight function in this article differs only in its temporal component, which in particular affects the estimate of the cross-product in terms of the differential $dz$ in the final expression. Analyzing this modification, we obtain the following estimate for the leading terms multiplied by $B_1(z)$, whose proof is given in Appendix \ref{missingterms}.
\begin{lemma}\label{lem:dz}(\textit{Terms that involve differential $dz$.}) For $\lambda\geq \max\{\lambda_{1},\lambda_2\}> 1$ and $\tau h (\max_{[0,T]}{\theta})\leq 1$, we have
\begin{align*}
    &\sum_{i=1}^{5}I_{i1}\geq C\E \int_{\mathcal{M}}\tau^2\lambda^3e^{2\lambda(6m+1)}|z(0)|^2+(\delta T)^{-m}\E\int_{\mathcal{M}}\tau^2\lambda^2\xi^2|\nabla_{\gamma}\psi|^2|z(T)|^2\\
    &-\E \int_{Q}s^2\lambda^2\xi^2|\nabla_{\gamma}\psi|^2|dz|^2+\sum_{i=1}^{n} \E\int_{Q_i^{\ast}}\gamma_i|D_i(dz)|^2-\left.\E\int_{\mathcal{M}_{i}^{\ast}}\gamma_i|D_iz|^2\right|_0^T-X_{1}-Y_{1},
\end{align*}
where
\begin{align*}
    &X_1:=\E\int_{Q} [s^3\lambda^2\xi^3|\nabla_\gamma\psi|^2+2s^2\mathcal{O}_{\lambda}(1)+s^3\mathcal{O}_{\lambda}((sh)^2)-\mathcal{O}_{\lambda}((sh)^2)|z|^2dt\\
    &+\E\int_{Q}\mathcal{O}_{\lambda}((sh)^2)|dz|^2-\E\int_{Q}s^2\mathcal{O}_{\lambda}((sh)^2)|z|^2dt-\E\int_{Q}\mathcal{O}_{\lambda}((sh)^2)\,|dz|^2\\
    &+\E\int_{Q}s^3\mathcal{O}_{\lambda}(1)|z|^2\,dt+\E\int_{Q}s^2\xi \varphi|dz|^2\\
&+\sum_{i=1}^{n}\left(\E\int_{Q_{i}^{\ast}}\mathcal{O}_{\lambda}((sh)^2)|D_i(dz)|^2+\E\int_{Q_i^{\ast}}s^2\mathcal{O}_{\lambda}((sh)^2)|D_iz|^2dt\right)
\end{align*}
and
\begin{equation*}
\begin{split}
    Y_{1}:=\,&\sum_{i=1}^{n}\E\left. \int_{\mathcal{M}_i^*}\mathcal{O}_{\lambda}((sh)^2)\,|D_iz|^2\right|_0^T+\E\int_{\mathcal{M}}\left[\tau^2\lambda^2\xi^2|\nabla_{\gamma}\psi|^2+\mathcal{O}_{\lambda}((sh)^2)\right]\,|z(0)|^2\\
    &-\E\int_{\mathcal{M}}\tau \lambda e^{6\lambda(m+1)}|z(T)|^2.
\end{split}
\end{equation*}
\end{lemma}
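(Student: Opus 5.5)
The plan is to compute each term $I_{i1}=2\E\int_Q C_i(z)\,dz$, $i=1,\dots,5$, separately. Each one is handled with the discrete integration-by-parts and product rules of \cite{LDOP-2021} together with Itô's formula in time. The computations should run parallel to \cite{LPP:2025}; the only genuinely new input is the time-dependence of $\theta$ near $t=0$ (the $\sigma$-power) and the bounded value $\theta(T)=(\delta T)^{-m}$. Throughout, the asymptotic expansions of \cite{AA:perez:2024} will be used freely: $rA_i^2\rho=1+\mathcal{O}_\lambda((sh)^2)$, $rD_i^2\rho=s^2\lambda^2\xi^2|\partial_i\psi|^2+s\mathcal{O}_\lambda(1)+s^2\mathcal{O}_\lambda((sh)^2)$, and so on. These are valid under $\tau h\max\theta\le 1$. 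Every error term coming from them will be put into $X_1$ or $Y_1$.

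\textbf{Step 1: the term $I_{11}$.} Using $rA_i^2\rho=1+\mathcal{O}_\lambda((sh)^2)$, summing by parts in $x$ and applying Itô's formula $d|D_iz|^2=2D_iz\,D_i(dz)+|D_i(dz)|^2$, I expect
\[
I_{11}=-\sum_i\E\int_{\mathcal{M}_i^*}\gamma_i|D_iz|^2\Big|_0^T+\sum_i\E\int_{Q_i^*}\gamma_i|D_i(dz)|^2 ,
\]
plus errors of the form $\mathcal{O}_\lambda((sh)^2)|D_i(dz)|^2$, $s^2\mathcal{O}_\lambda((sh)^2)|D_iz|^2$, together with boundary-in-time $\mathcal{O}_\lambda((sh)^2)|D_iz|^2$ terms. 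These errors are exactly the ones listed in $X_1$ and $Y_1$.

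\textbf{Step 2: the terms $I_{21}$ and $I_{31}$.} For $I_{21}$ I would write $A_i^2z=z+\frac{h^2}{4}D_i^2z$, so that $2\E\int_Q\gamma_i rD_i^2\rho\,z\,dz$ becomes, by Itô,
\[
\E\int_{\mathcal{M}}\gamma_i rD_i^2\rho\,|z|^2\Big|_0^T-\E\int_Q\partial_t(\gamma_i rD_i^2\rho)|z|^2\,dt-\E\int_Q\gamma_irD_i^2\rho\,|dz|^2 .
\]
The last piece gives the term $-s^2\lambda^2\xi^2|\nabla_\gamma\psi|^2|dz|^2$. For the time derivative I use $|\theta_t|\le C\theta^2$ on $[T/2,T]$, which makes it at most $s^3\mathcal{O}_\lambda(1)|z|^2$. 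The term $I_{31}$ is $2\E\int_Q s_t\varphi\, z\,dz$, and Itô turns it into
\[
\E\int_{\mathcal{M}} s_t\varphi|z|^2\Big|_0^T-\E\int_Q s_{tt}\varphi|z|^2\,dt-\E\int_Q s_t\varphi|dz|^2 .
\]

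\textbf{Step 3: the sign bookkeeping at $t=0$ and $t=T$ (main obstacle).} The positive term at $t=0$ must come from $I_{31}$. Since $\varphi<0$ and $\theta_t(0)=-4T^{-1}\sigma<0$, the boundary value $-s_t\varphi|z|^2\big|_{t=0}$ equals $\tau|\theta_t(0)||\varphi||z(0)|^2$. Here I will use $\sigma=\tau\lambda^2e^{\lambda(6m-4)}$ and $|\varphi|\ge(\lambda-1)e^{6\lambda(m+1)}$ to show this dominates $C\tau^2\lambda^3e^{2\lambda(6m+1)}|z(0)|^2$. I will also absorb the $I_{21}$ boundary term $\tau^2\lambda^2\xi^2|\nabla_\gamma\psi|^2|z(0)|^2$ (this is the $Y_1$ term) by choosing $\lambda\ge\lambda_1$ large.

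At $t=T$, $\theta_t>0$, so the $I_{31}$ contribution has an unfavorable sign. Bounding $|s_t\varphi|\le C\tau\lambda e^{6\lambda(m+1)}\theta^2$ gives the $-\tau\lambda e^{6\lambda(m+1)}|z(T)|^2$ term in $Y_1$. The $I_{21}$ term at $T$ gives the positive $(\delta T)^{-m}\tau^2\lambda^2\xi^2|\nabla_\gamma\psi|^2|z(T)|^2$, using $\theta(T)=(\delta T)^{-m}$.

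Care is needed with the interior term $-s_{tt}\varphi|z|^2$ on $[0,T/4]$, where $\theta_{tt}\sim\sigma^2$ is large. I plan to split it using the monotonicity of $\theta$ on $[0,T/4]$, so that the large part carries a favorable sign or is controlled by $s^3\lambda^2\xi^3|\nabla_\gamma\psi|^2|z|^2$. This is exactly where the choice of the exponent of $\sigma$ matters, and it is the delicate point. The term $-s_t\varphi|dz|^2$ is of order $s^2\xi\varphi|dz|^2$ and goes into $X_1$.

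\textbf{Step 4: the terms $I_{41}$ and $I_{51}$.} These carry the prefactor $h^2$. After summing by parts they become $\mathcal{O}_\lambda((sh)^2)$ multiples of $|D_i z|^2$, $|D_i(dz)|^2$, $|z|^2$ and $|dz|^2$, plus time-boundary terms, all of which are placed in $X_1$ and $Y_1$. Collecting Steps 1–4 then gives the stated inequality with $\lambda\ge\max\{\lambda_1,\lambda_2\}$.
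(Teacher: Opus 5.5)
Your skeleton (summation by parts plus It\^{o}'s formula term by term, with the positive $t=0$ term coming from $I_{31}$ through $\sigma$) is the paper's, but Steps~1 and~4 hide a genuine gap. It\^{o}'s formula only turns \emph{matched} pairs, $z$ with $dz$ or $D_iz$ with $D_i(dz)$, into squares plus time-boundary terms. Three sources produce mismatched products instead. First, when you sum by parts the remainder $\frac{h^2}{4}rD_i^2\rho$ of $rA_i^2\rho$ in $I_{11}$, the product rule $D_i(\beta\,dz)=A_i\beta\,D_i(dz)+D_i\beta\,A_i(dz)$ leaves $-\frac{h^2}{2}\E\int_{Q_i^{\ast}}\gamma_iD_i(rD_i^2\rho)\,D_iz\,A_i(dz)$. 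Second, the $\frac{h^2}{4}D_i^2z$ part of $A_i^2z$ in $I_{21}$ leaves the same kind of term with $D_i(\gamma_irD_i^2\rho)$. Third, $I_{41}$ and $I_{51}$ are themselves of the form $-\frac{h^2}{2}\E\int_{Q_i^{\ast}}(\cdots)\,A_iz\,D_i(dz)$. These mismatched products are not exact differentials: in expectation only the drift of $dz$ survives, which brings in higher-order differences of $z$ and the source $f$, none of which appears in $X_1$ or $Y_1$. So your assertion that $I_{41}$ and $I_{51}$ reduce, after summation by parts, to $\mathcal{O}_\lambda((sh)^2)$ multiples of $|D_iz|^2$, $|D_i(dz)|^2$, $|z|^2$, $|dz|^2$ is unfounded, and the error list in Step~1 is incomplete. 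The missing idea is the cancellation built into the decomposition \eqref{eq:opconjugate}. Write $D_iz\,A_i(dz)=D_i(z\,dz)-A_iz\,D_i(dz)$. The first piece sums by parts to $\frac{h^2}{4}\E\int_Q D_i(\gamma_iD_i(rD_i^2\rho))\,(d|z|^2-|dz|^2)$, which It\^{o}'s formula handles. The second piece is exactly $-I_{41}$ (respectively $-I_{51}$). This is why $C_4$ and $C_5$ are added to $C(z)$ and subtracted again in $M_h$, and why the paper only ever estimates $I_{11}+I_{41}$ and $I_{21}+I_{51}$.

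Step~3 also has sign errors. Since $C_3(z)=r\partial_t\rho\,z=-s_t\varphi\,z$, one has $I_{31}=-2\E\int_Q s_t\varphi\,z\,dz$, whose time-boundary term is $-\E\int_{\mathcal{M}}s_t\varphi|z|^2\big|_0^T$. In your version the $t=0$ contribution is $-\tau\theta_t(0)\varphi|z(0)|^2\le 0$ (both $\theta_t(0)$ and $\varphi$ are negative), so you reach the positive term only through a second slip. With the correct sign, the $t=T$ contribution $\tau\theta_t(T)|\varphi|\,|z(T)|^2$ is nonnegative and needs no bound. Your bound would in any case have carried the non-uniform factor $\theta(T)^2=(\delta T)^{-2m}$. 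Conversely, the interior term $\E\int_Q\tau\theta_{tt}\varphi|z|^2\,dt$ is unfavorable on $[0,T/4]$, where $\theta_{tt}\ge 0$, so no monotonicity splitting rescues it. What is needed is the size bound $\tau|\theta_{tt}\varphi|\le C\tau\sigma^2\lambda e^{6\lambda(m+1)}=C\tau^3\lambda^5e^{\lambda(18m-2)}\le Cs^3\lambda^3\xi^3$ for $\lambda$ large, one power of $\lambda$ below the leading $s^3\lambda^4\xi^3$ term. Likewise, in Step~2 the bound $|\theta_t|\le C\theta^2$ only covers $[T/2,T]$. On $[0,T/2]$ you need $|\theta_t|\le C\tau\xi$, which is what produces the $s^3\lambda^2\xi^3|\nabla_\gamma\psi|^2|z|^2$ term in $X_1$.
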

Our next step is to derive an estimate for the so-called \emph{correction terms} from \eqref{conmu}. In this case, the modifications arise in the terms multiplied by $C_3(z)$, since they involve the temporal derivative $\partial_t\rho$. Therefore, the term that requires analysis is $I_{33}$, while the estimates for the remaining terms follow the same arguments as in \cite{LPP:2025}. We thus obtain the following result, with the detailed analysis of the estimate for $I_{33}$ provided in the Appendix.

\begin{lemma}\label{lem:AdditionalsTerms}
    (\textit{product of the additional terms.}) For $\lambda\geq \lambda_1\geq 1$ and \\ $\tau h (\max_{[0,T]}{\theta})\leq 1$, we obtain
    \begin{align*}
        \sum_{i=4}^{5}I_{i2}+\sum_{i=1}^{5}I_{i3}\geq\, \sum_{i=1}^{n}\E\int_{Q_{i}^{\ast}}4s\lambda^2\xi\gamma_i&|\nabla_{\gamma}\psi|^2\,|D_iz|^2\,dt\\
        &-\E\int_{Q}4s^3\lambda^4\xi^3|\nabla_{\gamma}\psi|^4\,|z|^2\,dt-X_2
    \end{align*}
where
\begin{equation*}
    \begin{split}
        &X_2:=\, \sum_{i=1}^{n} \E\int_{Q_i^{\ast}}s|\mathcal{O}_{\lambda}((sh)^2)|\,|D_iz|^2\,dt+\sum_{i=1}^{n} \E\int_{Q_i^{\ast}}s\mathcal{O}_{\lambda}(sh)\,|D_iz|^2\,dt \\
        &+\sum_{i=1}^{n}\E\int_{Q_i^{\ast}}(s\lambda\mathcal{O}(1)+\mathcal{O}_{\lambda}(sh)\,|D_iz|^2\,dt+\E\int_{Q} s|\mathcal{O}_{\lambda}((sh)^2)|\,|z|^2\,dt\\
        &+\E\int_{Q}s^2\mathcal{O}_{\lambda}(1)\,|z|^2\,dt+\E\int_{Q}(s^3\lambda^3\xi^3\mathcal{O}(1)+s^2\mathcal{O}_{\lambda}(1)+s^3\mathcal{O}_{\lambda}(sh))\,|z|^2\,dt\\
        &+\E\int_{Q}s^3|\lambda^2\xi^2+\lambda\varphi\xi^2-\varphi\xi^2||\mathcal{O}(1)|\,|z|^2\,dt-\E\int_{Q}s|\mathcal{O}_{\lambda}((sh)^2)|\,|z|^2\,dt.
    \end{split}
\end{equation*}
\end{lemma}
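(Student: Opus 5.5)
The plan is to expand the sum $\sum_{i=4}^{5}I_{i2}+\sum_{i=1}^{5}I_{i3}$ term by term, following \cite{LPP:2025} for every product that does not involve the time derivative of the weight. The modified $\theta$ only enters through $C_3(z)=r\partial_t(\rho)z$, so the only genuinely new computation is $I_{33}$.

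\emph{Products with $B_3(z)=-2s(\Delta_\gamma\varphi)z$.}
\begin{itemize}
\item For $I_{13}=-4\E\int_Q\sum_i rA_i^2\rho\,D_i(\gamma_iD_iz)\,s(\Delta_\gamma\varphi)z\,dt$, apply discrete integration by parts (\cite[Lemma 2.2]{LDOP-2021}) with $z=0$ on $\partial Q$. Then use the asymptotic identities $rA_i^2\rho=1+\mathcal{O}_\lambda((sh)^2)$ and $D_i(s\Delta_\gamma\varphi)=s\lambda^2\xi\,\mathcal{O}(\lambda)$ from \cite{AA:perez:2024}.
\item Since $\Delta_\gamma\varphi=\lambda^2\xi|\nabla_\gamma\psi|^2+\lambda\xi\Delta_\gamma\psi$, the leading part gives the positive gradient term $4s\lambda^2\xi\gamma_i|\nabla_\gamma\psi|^2|D_iz|^2$ on $Q_i^\ast$. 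The remainder $s\lambda\mathcal{O}(1)|D_iz|^2$, the $\mathcal{O}_\lambda(sh)$ pieces, and the $|z|^2$ terms of order $s\mathcal{O}_\lambda((sh)^2)$ all go into $X_2$.
\item For $I_{23}$, use $\gamma_i rD_i^2\rho=s^2\lambda^2\xi^2\gamma_i|\partial_i\psi|^2+s\mathcal{O}_\lambda(1)+s^2\mathcal{O}_\lambda((sh)^2)$. Multiplying by $-2s\Delta_\gamma\varphi$ and averaging with $A_i^2z\,z=|z|^2+h^2\cdot(\ldots)$ gives $-4s^3\lambda^4\xi^3|\nabla_\gamma\psi|^4|z|^2$ plus errors $s^3\lambda^3\xi^3\mathcal{O}(1)$, $s^2\mathcal{O}_\lambda(1)$ and $s^3\mathcal{O}_\lambda(sh)$. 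The $h^2$ remainder of the average is traded for $D_iz$ terms with coefficient $s\mathcal{O}_\lambda((sh)^2)$.
\item The terms $I_{43},I_{53}$ and $I_{42},I_{52}$ contain the factor $h^2/4$ together with $rD_i^2\rho=\mathcal{O}_\lambda(s^2)$. After one summation by parts they are bounded by $s\mathcal{O}_\lambda((sh)^2)|D_iz|^2$ and $s^3\mathcal{O}_\lambda(sh)|z|^2$, exactly as in \cite{LPP:2025}.
\end{itemize}

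\emph{The term $I_{33}=-4\E\int_Q r\partial_t\rho\, s\Delta_\gamma\varphi|z|^2dt$.} Since $\rho=e^{-s\varphi}$, we have $r\partial_t\rho=-\tau\theta_t\varphi$. Hence
\[
I_{33}=4\E\int_Q \tau\theta_t\,s\,\varphi\,\Delta_\gamma\varphi\,|z|^2dt ,
\]
which I split into the intervals used by the weight.
\begin{itemize}
\item On $[T/2,T]$, use $|\theta_t|\le C\theta^2$, so $|\tau\theta_t|\le Cs^2/\tau\le Cs^2$. With $\Delta_\gamma\varphi=\mathcal{O}(\lambda^2\xi)$ and $|\varphi|\le \lambda e^{6\lambda(m+1)}$, this gives a bound $s^3|\lambda^2\xi^2+\lambda\varphi\xi^2-\varphi\xi^2|\mathcal{O}(1)|z|^2$, which is the stated form in $X_2$.
\item On $[T/4,T/2]$, $\theta_t\equiv0$ and there is nothing to estimate.
\item On $[0,T/4]$, $\theta_t\le0$ but $|\theta_t|\le4T^{-1}\sigma$ with $\sigma=\tau\lambda^2e^{\lambda(6m-4)}$.
\end{itemize}

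The $[0,T/4]$ interval is the main obstacle. Here $|\tau\theta_t|$ is not bounded by $s^2$ uniformly in $\lambda$. The key point is that $\sigma$ was chosen so that $\tau\sigma\lambda^2\le s^2\lambda^2\xi^{2}e^{-4\lambda}$ relative to $\xi\ge e^{6\lambda m}$. Together with $\theta\ge1$, this makes the whole contribution $\le Cs^3\lambda^3\xi^3\mathcal{O}(1)|z|^2$, which is again absorbed in $X_2$.

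I would first try to settle the sign of $\varphi\,\Delta_\gamma\varphi\,\theta_t$ on $[0,T/4]$. We have $\varphi<0$ and $\theta_t\le0$, while $\Delta_\gamma\varphi$ is positive outside $G_1$ for large $\lambda$. If the sign works out, part of $I_{33}$ is nonnegative and can simply be dropped. Otherwise the crude bound above is used.

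Collecting all pieces yields the stated inequality with $X_2$ as defined.
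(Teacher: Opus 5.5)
Your overall route is the paper's. You import every product not involving $C_3(z)$ from \cite{LPP:2025}, and you bound $I_{33}=4\E\int_Q s\tau\theta_t\varphi\,\Delta_\gamma\varphi\,|z|^2\,dt$ by splitting $[0,T]$, using $|\theta_t|\le C\theta^2$ on $[T/2,T]$ and the size of $\theta_t$ near $t=0$. The paper just writes $|\theta_t|\le s\mathcal{O}_\lambda(1)$ on $[0,T/2]$ and accepts $\lambda$-dependent constants. Your $[T/2,T]$ step is fine.

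The $[0,T/4]$ step fails as written. The inequality $\tau\sigma\lambda^2\le s^2\lambda^2\xi^2e^{-4\lambda}$ is true, but the integrand also carries $s|\varphi|\,|\Delta_\gamma\varphi|$. Your chain drops the factor $|\varphi|\approx\lambda e^{6\lambda(m+1)}$; what it actually gives is $Cs^3\lambda^2\xi^3e^{-4\lambda}|\varphi|$. Computing directly,
\[
4\tau|\theta_t|\,s\,|\varphi|\,|\Delta_\gamma\varphi|\le CT^{-1}\tau^2 s\,\lambda^5e^{\lambda(12m+2)}\,\xi\qquad\text{on }[0,T/4].
\]
This exceeds $s^3\lambda^3\xi^3$ by a factor of order $\lambda^2e^{2\lambda(1-\psi)}$. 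Near $t=0$ and away from $G_1$ this size is really attained: there $s=2\tau$, $|\theta_t|=4T^{-1}\sigma$ and $\Delta_\gamma\varphi\gtrsim\lambda^2\xi$. So the contribution is not $s^3\lambda^3\xi^3\mathcal{O}(1)$ with $\mathcal{O}(1)$ uniform in $\lambda$. The $\sigma$-trick you are recalling is the one that works for $I_{31}$. There the integrand is $\tau\sigma^2|\varphi|$, with the factor $s|\Delta_\gamma\varphi|\sim s\lambda^2\xi$ replaced by $\sigma=\tau\lambda^2e^{\lambda(6m-4)}$, which is smaller by at least $e^{4\lambda}$.

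For the lemma as stated the repair is immediate. Since $\lambda^4e^{\lambda(6m-4)}\le(\lambda-1)\xi$ for $\lambda\ge 2$, the $[0,T/4]$ contribution is at most $CT^{-1}s^3(\lambda-1)|\varphi|\xi^2|z|^2$. It therefore belongs to the term $s^3|\lambda^2\xi^2+\lambda\varphi\xi^2-\varphi\xi^2|\,|\mathcal{O}(1)|\,|z|^2$ of $X_2$, which is where you already put the $[T/2,T]$ piece.

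The coefficient genuinely is of order $\tau^3$ near $t=0$, so no pointwise bound of lower order in $s$ is available there, even with $\lambda$-dependent constants. The $s^2\mathcal{O}_\lambda(1)$ that the paper records for $I_{33}$ loses a factor $\tau$.

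Your sign idea is what you need if the bound is to be absorbed later, but it covers only part of $Q$. On $[0,T/4]$ one has $\theta_t\varphi\ge 0$. On $G\setminus\overline{G_1}$ one has $\Delta_\gamma\varphi\ge\lambda\xi(\lambda\gamma_0^{-1}\alpha^2-C)>0$ for $\lambda$ large, so that part of $I_{33}$ is nonnegative and can be dropped. Inside $G_1$ there is no sign, and the remainder, of size $C_\lambda s^3|z|^2$, has to be sent to the localized observation term.
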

Finally, the terms $I_{12}$ and $I_{22}$ are similar to those in the deterministic case discussed in \cite{BLR:2014}, since the temporal variable does not play a significant role. For this reason, we do not provide a detailed proof of their estimation. However, as in the previous case, the term $I_{32}$ involves $C_3(z)$, which depends on the temporal derivative. Therefore, by combining the estimates of $I_{12}$ and $I_{22}$ with that of $I_{32}$, we obtain the following result, whose estimate of $I_{32}$ is presented in the Appendix.
\begin{lemma}\label{lem:dt}
(\textit{Terms involving the differential $dt$.}) For $\tau h (\max_{[0,T]}{\theta})\leq 1$, we obtain
\begin{equation*} 
    \sum_{i=1}^{3}I_{i2}\geq\,\E\int_{Q} 6s^3\lambda^4\xi^3|\nabla_{\gamma}\psi|^4\,|z|^2\,dt-\sum_{i=1}^{n}\E\int_{Q_{i}^{\ast}}2s\lambda^2\xi \gamma_{i}|\nabla_{\gamma}\psi|^2\,|D_iz|^2\,dt-X_3-Y_2
\end{equation*}
where
\begin{equation*}
    \begin{split}
        &X_{3}:=\,\sum_{i=1}^{n}\E\int_{Q}|s\lambda\xi\mathcal{O}(1)+\mathcal{O}_{\lambda}(sh)+s\mathcal{O}_{\lambda}(sh)+s\mathcal{O}_{\lambda}((sh)^{2})|\,|D_{i}A_{i}z|^{2}\,dt\\
    &+\sum_{\substack{i,j=1\\ i\ne j}}^{n}\E\int_{Q_{ij}^{*}}\left|h\lambda\mathcal{O}(sh)+h\mathcal{O}_{\lambda}((sh)^{2})\right|\,|D_{ij}^{2}z|^{2}\,dt\\
    &+\sum_{i=1}^{n}\E\int_{Q^{\ast}_{i}}|s\lambda\xi\mathcal{O}(1)+s\mathcal{O}_{\lambda}(sh)+h\mathcal{O}_{\lambda}(sh)+s\mathcal{O}_{\lambda}((sh)^{2})+\mathcal{O}_{\lambda}((sh)^2)|\, |D_{i}z|^{2}\\
    &+\E\int_{Q}(s^2\lambda^3\xi^2\mathcal{O}(1)+s^2\mathcal{O}_{\lambda}(1)+s^2\mathcal{O}_{\lambda}(1)+s^3\mathcal{O}_{\lambda}((sh)^2))|z|^2\\
    &+\sum_{i=1}^{n}\E\int_{Q}h^{2}|\mathcal{O}_{\lambda}(sh)|\,|D_{i}^{2}z|^{2}\,dt+\sum_{i=1}^{n}\E\int_{Q}\left|h\lambda\mathcal{O}(sh)+h\mathcal{O}_{\lambda}((sh)^{2})\right|\,|D_{i}^{2}z|^{2}\,dt
    \end{split}
\end{equation*}
and
\begin{equation*}
    \begin{split}
        Y_{2}&:=\,\sum_{i=1}^{n}\E\int_{\partial_{i}Q}\left(-2s\lambda\xi(\gamma_i)^2\partial_{i}\psi+s\mathcal{O}_{\lambda}(sh)+h\mathcal{O}_{\lambda}(sh)\right)t_{r}^{i}(|D_{i}z|^{2})\nu_{i}\,dt\\
        &+\sum_{i=1}^{n}\E\int_{\partial_iQ}\mathcal{O}_{\lambda}(sh)t_r^{i}(|D_iz|^2)\,dt+\sum_{i,j=1}^{n}E\int_{\partial_{i}Q}s\mathcal{O}_{\lambda}((sh)^2)\,t_{r}^{i}(|D_{i}z|^{2})\nu_{i}\,dt.
    \end{split}
\end{equation*}
\end{lemma}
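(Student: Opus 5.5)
We estimate the three cross-product terms $I_{12}$, $I_{22}$, $I_{32}$ separately and then add. That is, we estimate
\[
2\E\int_Q C_j(z)\,B_2(z)\,dt, \qquad j=1,2,3,
\]
with $B_2(z)=2\sum_{i}\gamma_i rD_iA_i\rho\,D_iA_iz$.

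\emph{Terms $I_{12}$ and $I_{22}$.} These involve no time derivative of the weight, so they can be taken essentially verbatim from the deterministic computation in \cite{BLR:2014}, as adapted to variable $\gamma_i$ in \cite{LPP:2025}. We use the asymptotic expansions of the discrete weights from \cite{AA:perez:2024}:
\[
rD_iA_i\rho=-s\lambda\xi\partial_i\psi+s\mathcal{O}_\lambda((sh)^2),\qquad rD_i^2\rho=s^2\lambda^2\xi^2|\partial_i\psi|^2+s\mathcal{O}_\lambda(1)+s^2\mathcal{O}_\lambda((sh)^2),
\]
together with $rA_i^2\rho=1+\mathcal{O}_\lambda((sh)^2)$. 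These hold uniformly in $t$ because $\tau h\max\theta\le 1$, which is exactly why $\theta$ was kept bounded through $\delta$.

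\begin{itemize}
\item In $I_{12}$ we perform discrete integrations by parts in $x_i$ (and in $x_j$ for the mixed terms $i\neq j$, producing the $D_{ij}^2z$ contributions of $X_3$). This yields the negative gradient term $-2\sum_i\E\int_{Q_i^*}s\lambda^2\xi\gamma_i|\nabla_\gamma\psi|^2|D_iz|^2$, lower-order remainders, and the boundary terms forming $Y_2$. Since $z=0$ on $\partial\mathcal M$, only the traces $t_r^i(|D_iz|^2)$ survive, with coefficient $-2s\lambda\xi\gamma_i^2\partial_i\psi\,\nu_i$.
\item In $I_{22}$ we write $A_i^2z\,D_iA_iz=\tfrac12 D_i(|A_iz|^2)$ up to $h^2$ corrections, and integrate by parts once. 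This produces the leading positive term $6s^3\lambda^4\xi^3|\nabla_\gamma\psi|^4|z|^2$ (the familiar $3\times2$ count), plus remainders of the form $s^3\mathcal{O}_\lambda((sh)^2)|z|^2$ and $s^2\lambda^3\xi^2|z|^2$. The latter arise when the derivative falls on $\xi$ without the full $\lambda$-power.
\end{itemize}

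\emph{Term $I_{32}$.} This is the new term, and we treat it explicitly. We have
\[
C_3(z)=r\partial_t\rho\,z=-s_t\varphi\,z=-\tau\theta_t\varphi\,z.
\]
Hence
\[
I_{32}=-4\sum_i\E\int_Q\tau\theta_t\varphi\,\gamma_i\,rD_iA_i\rho\;z\,D_iA_iz\,dt.
\]
We write $z\,D_iA_iz$ as $\tfrac12 D_i(|A_iz|^2)$ plus an $h^2$ correction, integrate by parts in $x_i$, and move the difference onto $\tau\theta_t\varphi\gamma_i rD_iA_i\rho$. This gives a bound of the form
\[
|I_{32}|\le\E\int_Q|\theta_t|\tau\,|\varphi|\,(s\lambda^2\xi+s\mathcal{O}_\lambda(sh))|z|^2+\text{boundary terms}.
\]
The size of $\theta_t$ is then handled separately on two time intervals.

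\begin{itemize}
\item On $[T/2,T]$ we use $|\theta_t|\le C\theta^2$. This gives $s^3\mathcal{O}_\lambda(1)$-type terms, whose $\lambda$-dependence must be tracked through $|\varphi|\le\lambda e^{6\lambda(m+1)}$.
\item On $[0,T/2]$ we use $|\theta_t|\le 4T^{-1}\sigma$ with $\sigma=\tau\lambda^2e^{\lambda(6m-4)}$. Since $\theta\ge1$ there, $\tau\sigma$ is comparable to $s^2\lambda^2e^{\lambda(6m-4)}$. The product $|\varphi|\xi$ must then be compared with $\xi^3$; it is dominated because $e^{\lambda(6m-4)}\lambda e^{6\lambda(m+1)}\lesssim\xi^3=e^{3\lambda(\psi+6m)}$ when $m\ge1$. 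These pieces go into $s^2\mathcal{O}_\lambda(1)|z|^2$ and $s^2\lambda^3\xi^2|z|^2$ in $X_3$.
\end{itemize}

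The main obstacle is this bookkeeping in $I_{32}$ near $t=0$. There $\theta_t$ is large, of order $\sigma$ and hence of order $\tau$, so the $\lambda$ exponents must be checked to keep the result strictly below the principal term $s^3\lambda^4\xi^3$, rather than merely $\mathcal{O}_\lambda$. If the direct comparison fails, we will instead leave those contributions in $X_3$ as $s^3\lambda^3\xi^3\mathcal{O}(1)|z|^2$, to be absorbed later using the large $\lambda$. Collecting $I_{12}+I_{22}+I_{32}$ then gives the stated inequality.
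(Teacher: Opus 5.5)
The gap is in $I_{32}$ near $t=0$, exactly the step you flagged, and your fallback fails there too. Your handling of $I_{12}$, $I_{22}$ and your scheme for $I_{32}$ (integrate by parts, bound in absolute value, split $[0,T]$ by the size of $\theta_t$) are the paper's.

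At $t=0$ one has $|\theta_t|=4T^{-1}\sigma$ and $s=2\tau$, so the coefficient in your bound is
\[
\tau|\theta_t|\,|\varphi|\,s\lambda^2\xi=4T^{-1}\tau^2\lambda^4e^{\lambda(6m-4)}|\varphi|\,s\,\xi .
\]
This is cubic in $\tau$. Indeed $\tau\sigma$ is already of order $s^2$, and $D_i(\varphi\,rD_iA_i\rho)$ supplies another factor $s$, so it is an $s^3$ term, not an $s^2$ one.

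Your comparison of the $\lambda$-weights is also off by one factor of $\xi$. The inequality $e^{\lambda(6m-4)}\lambda e^{6\lambda(m+1)}\le C\xi^3$ is true but does not help, because a factor $\xi$ is already present. You would need $e^{\lambda(6m-4)}|\varphi|\ll\xi^2$, and the opposite holds: $|\varphi|\ge(\lambda-1)e^{6\lambda(m+1)}$ and $\xi^2\le e^{\lambda(12m+2)}$ give $e^{\lambda(6m-4)}|\varphi|\ge(\lambda-1)\xi^2$.

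So near $t=0$ the term is at least $T^{-1}(\lambda-1)s^3\lambda^4\xi^3|z|^2$. It fits neither in $s^2\mathcal{O}_\lambda(1)|z|^2$ nor in $s^3\lambda^3\xi^3\mathcal{O}(1)|z|^2$ with an $\mathcal{O}(1)$ independent of $\lambda$.

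This is not an artifact of the absolute values. At leading order
$I_{32}\approx-2\E\int_Q s_ts\sum_{i}\partial_i(\gamma_i\varphi\,\partial_i\varphi)|z|^2\,dt$.
Its dominant part, $-2\E\int_Q s_ts\,\varphi\,\lambda^2\xi|\nabla_\gamma\psi|^2|z|^2\,dt$, is \emph{negative} on $(0,T/4)$, since there $s_t=\tau\theta_t<0$ and $\varphi<0$. At $t=0$ it exceeds $6s^3\lambda^4\xi^3|\nabla_\gamma\psi|^4|z|^2$ by a factor at least $(\lambda-1)/(3T|\nabla_\gamma\psi|^2)$. Hence a lower bound for $I_{12}+I_{22}+I_{32}$ with a remainder of the type listed in $X_3$ cannot hold once $\lambda$ is large.

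Following the paper does not fix this. Its own computation, using $|\theta_t|\le C\tau\xi$, ends with $-\E\int_Q s^3|\lambda^2\xi^3+\lambda\varphi\xi^2-\varphi\xi^2||\mathcal{O}(1)||z|^2\,dt$. That term is not in $X_3$ (it reappears in $X_2$), and it is not dominated by $s^3\lambda^4\xi^3$ either, since $|\varphi|/(\lambda^3\xi)\ge(\lambda-1)\lambda^{-3}e^{5\lambda}$.

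The missing idea is to use the sign of $\theta_t\varphi$ on $[0,T/4]$ and estimate $I_{32}$ together with $I_{33}=2\E\int_Q C_3(z)B_3(z)\,dt=4\E\int_Q s_ts\,\varphi\,\Delta_\gamma\varphi\,|z|^2\,dt$. At leading order
\[
I_{32}+I_{33}\approx 2\E\int_Q s_ts\Big(\varphi\,\Delta_\gamma\varphi-\sum_{i=1}^n\gamma_i|\partial_i\varphi|^2\Big)|z|^2\,dt+\E\int_Q s_ts\,\lambda\varphi\xi\,\mathcal{O}(1)|z|^2\,dt .
\]
On $[0,T/4]$, outside $G_1$, both terms of the bracket are nonnegative:
\begin{itemize}
\item $s_t\le0$ and $\varphi<0$;
\item for $\lambda$ large, $\Delta_\gamma\varphi\ge\frac12\lambda^2\xi|\nabla_\gamma\psi|^2>0$ there, which also dominates the $\lambda\varphi\xi\,\mathcal{O}(1)$ piece.
\end{itemize}
Inside $G_1$ everything is $s^3\mathcal{O}_\lambda(1)|z|^2$ and goes to the observation term. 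The discrete corrections are of the $(sh)^2$-small type already collected in $X_3$ and $Y_2$.

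Absolute values are harmless only on $[T/4,T]$. There $\theta_t=0$ on $[T/4,T/2]$, and on $[T/2,T]$ one has $\tau|\theta_t|\le C\tau\theta^2=Cs^2/\tau$. The contribution is therefore $s^3\tau^{-1}\mathcal{O}_\lambda(1)|z|^2$. It is absorbed by taking $\tau$ large for fixed $\lambda$, not by tracking $\lambda$ as you suggest.

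In short, the early-time part of $I_{32}$ cannot be estimated in isolation. It must be paired with $I_{33}$, so the lemma needs to be reorganized accordingly.
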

Then by Lemma \ref{lem:dz}-\ref{lem:dt}, from \eqref{conmu} we obtain, for $\lambda\geq\max\{\lambda_1,\lambda_2\}$ and $0<\tau h (\max_{[0,T]}{\theta})\leq \varepsilon_1(\lambda)$
\begin{equation}\label{eq:sumterms}
\begin{split}
    2\E \int_Q C(z)B(z) \geq&\,-\sum_{j=1}^{3}X_{j}-\sum_{j=1}^2 Y_j+\E\int_{Q}2s^{3}\lambda^{4}\xi^{3}|\nabla_{\gamma}\psi|^{4}\,|z|^{2}\,dt\\
    &+ \sum_{i=1}^{n}\E\int_{Q_{i}^{\ast}}2s\lambda^{2}\xi\gamma_i|\nabla_{\gamma}\psi|^{2}\,|D_{i}z|^{2}\,dt-\E\int_{Q}s^2\lambda^2\xi^2|\nabla_{\gamma}\psi|^2|dz|^2\\
    &+\sum_{i=1}^{n} \E\int_{Q^{\ast}}\gamma_i |D_i(dz)|^2+C\E\int_{\mathcal{M}}\tau^2\lambda^3e^{2\lambda(6m+1)}|z(0)|^2\\
    &+(\delta T)^{-m}\E\int_{\mathcal{M}}\tau^2\lambda^2\xi^2|\nabla_{\gamma}\psi|^2|z(T)|^2-\sum_{i=1}^{n}\left.\E\int_{\mathcal{M}_{i}^{\ast}}\gamma_i|D_iz|^2\right    |_0^T.
    \end{split}
\end{equation}
To give an estimate of the right-hand side of \eqref{eq:Estirf}, we need the following estimation of $M_hz$, see \eqref{eq:opconjugate}. Similarly to \cite{LPP:2025}, the proof can be adapted from Lemma 4.2 in \cite{BHLR:2010a} and the estimation of $\Phi$ in \cite{zhao:2024}.
\begin{lemma}\label{lem:estimate:termsMh}
(\textit{Estimate of $M_h(z)$.)} For $\tau h (\max_{[0,T]}{\theta})\leq 1$, we have
\begin{equation*}
    \E\int_{Q}|M_h(z)|^2\,dt\leq \mathcal{O}_{\lambda}(1)\left(\E\int_{Q}s^2|z|^2\,dt+h^2\sum_{i=1}^{n}\int_{Q^{\ast}}s^2|D_iz|^2\,dt\right).
\end{equation*}
\end{lemma}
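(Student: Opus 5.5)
The plan is to bound $M_h(z)=C_4(z)+C_5(z)+B_3(z)-R_h z$ term by term, using the asymptotic calculus for the discrete weights from \cite{BHLR:2010a,AA:perez:2024}. We work in the regime $\tau h(\max_{[0,T]}\theta)\le 1$, i.e. $sh\le 1$. In this regime one has
\[
rA_i^k\rho=1+\mathcal{O}_\lambda((sh)^2),\qquad rD_i\rho,\; rA_iD_i\rho=s\mathcal{O}_\lambda(1),\qquad rD_i^2\rho=s^2\mathcal{O}_\lambda(1),
\]
and more generally each discrete derivative of $rD_i^k\rho$ costs at most a factor $s\mathcal{O}_\lambda(1)$. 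Since $\theta$ is now bounded by $(\delta T)^{-m}$, these expansions hold uniformly in $t$. The $\mathcal{O}_\lambda$ constants are uniform in $t$ because the weight is of the form $s(t)\varphi(x)$, so spatial differences do not see $\theta$. Hence the only change with respect to \cite{LPP:2025} is the definition of $s$.

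\textbf{Step 1: the term $B_3$.} Since $\psi\in C^4$ and $\gamma_i\in W^{1,\infty}$, we have $|B_3 z|^2=4s^2|\Delta_\gamma\varphi|^2|z|^2\le s^2\mathcal{O}_\lambda(1)|z|^2$.

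\textbf{Step 2: the term $R_h z$.} For the zeroth-order coefficients, $h\,rD_i^2\rho=h s^2\mathcal{O}_\lambda(1)\le s\mathcal{O}_\lambda(1)$ because $sh\le1$. Also $D_i\gamma_i\, rA_iD_i\rho=s\mathcal{O}_\lambda(1)$ by (A1). Together with $|A_i^2z|^2\le A_i^2|z|^2$ and integration by parts for the average operator, which uses $z=0$ on $\partial\mathcal{M}$, this contribution is bounded by $\int s^2\mathcal{O}_\lambda(1)|z|^2$.

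The terms multiplying $D_iA_iz$ have coefficients $h\,rD_iA_i\rho=hs\mathcal{O}_\lambda(1)$ and $h^2rD_i^2\rho=h^2s^2\mathcal{O}_\lambda(1)\le hs\mathcal{O}_\lambda(1)$. Using $|D_iA_iz|^2\le A_i|D_iz|^2$, they give $h^2\int_{Q_i^\ast}s^2\mathcal{O}_\lambda(1)|D_iz|^2$.

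The last term carries $D_i^2z$. Writing $hD_i^2z=D_iz(\cdot+\tfrac h2)-D_iz(\cdot-\tfrac h2)$, we get $h^2|D_i^2 z|^2\le C\,A_i|D_iz|^2$. Its coefficient is $h\,rD_iA_i\rho=hs\mathcal{O}_\lambda(1)$, so the squared contribution is again $h^2 s^2\mathcal{O}_\lambda(1)|D_iz|^2$ after integration over $Q_i^\ast$.

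\textbf{Step 3: the terms $C_4,C_5$ (main obstacle).} These are $\tfrac{h^2}{4}D_i(\cdot)$ of products and cannot be bounded naively. We expand with the discrete Leibniz rule:
\[
D_i(a\,A_iz)=D_ia\,A_i^2z+A_ia\,D_iA_iz.
\]
For $C_4$ take $a=\gamma_iD_i(rD_i^2\rho)$, and for $C_5$ take $a=D_i(\gamma_i rD_i^2\rho)$.

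The first product is $h^2D_ia\,A_i^2 z$. We need $D_ia$ with a second difference of $\gamma_i$ absent, and a careful expansion gives $D_ia=s^4\mathcal{O}_\lambda(1)$ up to $\gamma$-regularity terms. Then $h^2 s^4=(sh)^2s^2\le s^2$. One delicate point is that (A1) only controls first derivatives of $\gamma_i$. For $C_5$ we therefore rearrange so that $D_i$ never falls twice on $\gamma_i$, using the discrete integration by parts already built into the splitting of \cite{LPP:2025}. Alternatively, we accept a term $h\,\mathcal{O}(1)$ arising from $hD_i^2\gamma_i$, which is bounded by $2\|\nabla\gamma\|_\infty$.

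The second product is $h^2A_ia\,D_iA_iz$, with $A_ia=s^3\mathcal{O}_\lambda(1)$. Hence $h^4s^6=(sh)^4s^2\le h^2s^2\cdot(sh)^2$, which gives $h^2s^2\mathcal{O}_\lambda(1)|D_iz|^2$.

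\textbf{Conclusion.} Summing the estimates from Steps 1–3 with $(a+b+c+d)^2\le4(\dots)$ yields the claimed bound
\[
\E\int_Q|M_h(z)|^2\,dt\le\mathcal{O}_\lambda(1)\Big(\E\int_Q s^2|z|^2\,dt+h^2\sum_{i=1}^n\E\int_{Q_i^\ast}s^2|D_iz|^2\,dt\Big).
\]
This follows Lemma 4.2 of \cite{BHLR:2010a}, with the time factor treated as in \cite{zhao:2024}.
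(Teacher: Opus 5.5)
Your Steps 1 and 2 are correct, and the overall plan is the intended one: term-by-term power counting with the discrete weight calculus, as in Lemma 4.2 of \cite{BHLR:2010a}. The gap is in Step 3. It comes from the rule you adopt at the start, that each discrete derivative of $rD_i^k\rho$ costs a factor $s\,\mathcal{O}_\lambda(1)$. That rule is a valid upper bound, but it is too lossy, and with it $C_4,C_5$ do not close.

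\begin{itemize}
\item For the first product, the coefficient of $A_i^2z$ is $h^2D_ia=(sh)^2s^2\mathcal{O}_\lambda(1)$, which is only $\le s^2$. Squaring gives $s^4|A_i^2z|^2$, not $s^2|z|^2$. In Step 2 you correctly required coefficients of size $s$.
\item For the second product, the inequality $(sh)^4s^2\le h^2s^2(sh)^2$ is false, since it is equivalent to $s^2\le 1$. The correct rewriting $h^4s^6=h^2s^2\cdot(sh)^2s^2$ is again off by a factor $s^2$.
\end{itemize}
The constraint $sh\le 1$ cannot absorb these extra powers of $s$.

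The missing ingredient is that differences and averages of a \emph{conjugated} coefficient do not raise the power of $s$. For $sh\le1$ one has $D_i^{\beta}A_i^{j}(rD_i^k\rho)=s^k\mathcal{O}_\lambda(1)$; see \cite[Theorem 3.5]{AA:perez:2024} and the corresponding results in \cite{BHLR:2010a}. Heuristically, $\partial_x(r\partial_x^k\rho)=s\,\partial_x\varphi\,(r\partial_x^k\rho)+r\partial_x^{k+1}\rho$, and the leading $s^{k+1}$ parts of the two terms cancel. So a spatial derivative only produces powers of $\lambda$. The paper uses exactly this in Appendix~\ref{missingterms}, for instance $D_i(rD_i^2\rho)=s^2\mathcal{O}_\lambda(1)$ and $D_i(\gamma_iD_i(rD_i^2\rho))=s^2\mathcal{O}_\lambda(1)$.

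With this fact, for $C_4$ the quantities $a$, $A_ia$ and $D_ia$ are all $s^2\mathcal{O}_\lambda(1)$. Hence $h^2D_ia=(sh)^2\mathcal{O}_\lambda(1)$ and $|h^2A_ia|^2=h^2s^2(sh)^2\mathcal{O}_\lambda(1)\le h^2s^2\mathcal{O}_\lambda(1)$, which gives the lemma. The same holds for $C_5$, except for the term containing $D_i^2\gamma_i$.

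For that $\gamma_i$ issue, only your second option works. Since $|hD_i^2\gamma_i|\le 2\|\nabla\gamma_i\|_\infty$, the term $\tfrac{h^2}{4}D_i^2\gamma_i\,A_i^2(rD_i^2\rho)$ is bounded by $C\|\nabla\gamma_i\|_\infty\,hs^2\mathcal{O}_\lambda(1)\le s\,\mathcal{O}_\lambda(1)$. Rearranging by discrete integration by parts is not available, because you are bounding the pointwise $L^2$ norm of $M_h(z)$, not a bilinear pairing.
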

Combining the above lemma with \eqref{eq:sumterms}, we see that if we choose $\lambda_{0}\geq \max\{\lambda_{1},\lambda_2\}$ sufficiently large, then $\lambda=\lambda_0$(fixed for the rest of the proof), $0<\tau h(\max_{[0,T]}\theta)\leq \varepsilon_{1}(\lambda_0)$ and $0<h\leq h_1(\lambda_0)$, we have
\begin{equation}\label{eq:conmutador}
\begin{split}
\E\int_{Q}|rf|^2\,dt&+\E\int_{Q}s^2\lambda^2\xi^2|\nabla_{\gamma}\psi|^2\,|dz|^2+\E\int_{\mathcal{M}}\gamma_i|D_iz(T)|^2\\
   \geq& \,\E\int_{Q}2s^{3}\lambda^{4}\xi^{3}|\nabla_{\gamma}\psi|^{4}\,|z|^{2}\,dt+ \sum_{i=1}^{n}\E\int_{Q_{i}^{\ast}}s\lambda^{2}\xi\gamma_i|\nabla_{\gamma}\psi|^{2}\,|D_{i}z|^{2}\,dt\\
   &+\E\int_{\mathcal{M}}\tau^2\lambda^3e^{2\lambda(6m+1)}|z(0)|^2+\sum_{i=1}^{n}\E\int_{\mathcal{M}_{i}^{\ast}}\gamma_i|D_iz(0)|^2\\
   &+(\delta T)^{-m}\E\int_{\mathcal{M}}\tau^2\lambda^2\xi^2|\nabla_{\gamma}\psi|^2\,|z(T)|^2+\tilde{X}+\tilde{Y},
   \end{split}
\end{equation}
with
\begin{equation}
\begin{split}
    \tilde{X}:=&\sum_{l=1}^{3}X_{l}+\sum_{m=1}^{3}Y_{m}-\E\int_{Q}s^{3}\lambda^{3}\xi^{3}\mathcal{O}(1)|z|^{2}\,dt.
    \end{split}
\end{equation}
We can now choose $\varepsilon_{0}$ and $h_{0}$ sufficiently small, with $0<\varepsilon_{0} \leq \varepsilon_{1}(\lambda_{0})$, $0<h_0\leq h_1(\lambda_0)$, and $\tau_{1}\geq 1$ sufficiently large, such that for $\tau\geq \tau_1$ (meaning, in particular, that $s(t)$ is taken sufficiently large), $0<h\leq h_{0}$, and $\tau h(\max_{[0,T]}\theta)\leq \varepsilon_{0}$, we obtain 
\begin{equation}\label{eq:firstestimate}
\begin{split}
C_{s_0,\varepsilon}&\left(\E\int_{Q}|rf|^2\,dt+\E\int_{Q}s^2\lambda_{0}^2\xi^2|\nabla_{\gamma}\psi|^2\,|dz|^2+\E\int_{\mathcal{M}}\gamma_i|D_iz(T)|^2\right)\\
   &\geq \,\E\int_{Q}2s^{3}\lambda_{0}^{4}\xi^{3}|\nabla_{\gamma}\psi|^{4}\,|z|^{2}\,dt+ \sum_{i=1}^{n}\E\int_{Q_{i}^{\ast}}s\lambda_{0}^{2}\xi\gamma_i|\nabla_{\gamma}\psi|^{2}\,|D_{i}z|^{2}\,dt\\
   &+\E\int_{\mathcal{M}}\tau^2\lambda_{0}^3e^{2\lambda(6m+1)}|z(0)|^2+\sum_{i=1}^{n}\E\int_{\mathcal{M}_{i}^{\ast}}\gamma_i|D_iz(0)|^2+\tilde{Y},
    \end{split}
\end{equation}
where
\begin{equation}
    \begin{split}
        \tilde{Y}:= \,\sum_{i=1}^{n}\E\int_{\partial_i Q }2s\lambda \xi &(\gamma_i)^2\partial_i\psi\, t_r^i((D_iz|^2)\nu_i\,dt.
    \end{split}
\end{equation}
Moreover, since $|D_iz|^2\leq\,Ch^{-2}(|\tau_{-i}v|^2+|\tau_{+i}v|^2)$ and remember that $\psi$ satisfies \eqref{assumtion:psi}, we can conclude the following 
\begin{equation}\label{eq:finalestimateinz}
\begin{split}
&\E\int_{Q}s^{3}\lambda_{0}^{4}\xi^{3}\,|z|^{2}\,dt+ \sum_{i=1}^{n}\E\int_{Q_{i}^{\ast}}s\lambda_{0}^{2}\xi\,|D_{i}z|^{2}\,dt+\E\int_{\mathcal{M}}\tau^2\lambda_{0}^3e^{2\lambda(6m+1)}|z(0)|^2\\
&+\sum_{i=1}^{n}\E\int_{\mathcal{M}_{i}^{\ast}}|D_iz(0)|^2\leq\,C_{s_0,\varepsilon}\bigg(\E\int_{Q}|rf|^2\,dt+\E\int_{Q}s^2\lambda_{0}^2\xi^2\,|dz|^2+\frac{1}{h^2}\E\int_{\mathcal{M}}|z(T)|^2\bigg)\\
&+\E\int_{0}^T\int_{G_{1}\cap \mathcal{M}}s^{3}\lambda^{4}_{0}\xi^{3}\,|z|^{2}\,dt+ \sum_{i=1}^{n}\E\int_0^T\int_{G_{1}\cap\mathcal{M}_{i}^{\ast}}s\lambda^{2}_{0}\xi\,|D_{i}z|^{2}\,dt.
\end{split}
\end{equation}
\subsection{Return to original variable} Finally, we return to our original function. Similarly to \cite{LPP:2025}, we can obtain the following result using the same argument mentioned in the final part of the proof of Theorem 1.4 in \cite{BLR:2014}. 
\begin{lemma}\label{lem:inequalityDz-Dw}
For $\tau h (\max_{[0,T]}{\theta})\leq 1$, we have for each $i=1,...,n$ 
\begin{equation*}
   \begin{split}
        \E\int_{Q_{i}^{\ast}}s\xi \lambda^2&|rD_i w|^2\,dt\leq C\left(\E\int_{Q_i^{\ast}}s\xi\lambda^2\,|D_iz|^2\,dt+\E\int_{Q_{i}^{\ast}}s\mathcal{O}_{\lambda}((sh)^2)\,|D_iz|^2\,dt\right.\\
        &\left. \E\int_{Q}s^3\xi^3\lambda^4|z|^2\,dt+\E\int_{Q}s\mathcal{O}_{\lambda}((sh)^2)\,|z|^2\,dt+\E\int_{Q}s^3\mathcal{O}_{\lambda}((sh)^2)\,|z|^2\,dt\right),
    \end{split}
\end{equation*}
\begin{align*}
    &\E\int_{\mathcal{M}_{i}^{\ast}}|rD_iw(0)|^2\leq C\left(\E\int_{\mathcal{M}_{i}^{\ast}}e^{4\tau \varphi(x)}|D_iz(0)|^2+\E\int_{\mathcal{M}_{i}^{\ast}}\mathcal{O}_{\lambda}((sh)^2)|D_iz(0)|^2\right.\\
    &\left.+\E\int_{\mathcal{M}}\tau^2\lambda^2e^{2\lambda(6m+1)}e^{4\tau\varphi(x)}|z(0)|^2+\E\int_{\mathcal{M}}\mathcal{O}_{\lambda}((sh)^2|z(0)|^2\right)
\end{align*}
and
\begin{equation*}
    \begin{split}
        \E\int_0^T&\int_{G_{1}\cap\mathcal{M}_i^{\ast}}s\xi\lambda^2|D_iz|^2\,dt\\
        &\leq\, C\left(\E\int_0^T\int_{\mathcal{O}_{1}\cap\mathcal{M}_i^{\ast}}s\xi\lambda^2\,|rD_iw|^2\,dt+\E\int_0^T\int_{\mathcal{O}_{1}\cap \mathcal{M}_{i}^{\ast}}s\mathcal{O}_{\lambda}((sh)^2)\,|D_iz|^2\,dt\right.\\
        &\left. \E\int_0^T\int_{\mathcal{O}_{1}\cap\mathcal{M}}s^3\varphi^3\lambda^4|z|^2\,dt+\E\int_0^T\int_{\mathcal{O}_{1}\cap\mathcal{M}}s^3\mathcal{O}_{\lambda}((sh)^2)\,|z|^2\,dt\right).
    \end{split}
\end{equation*}
\end{lemma}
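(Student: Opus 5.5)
The plan is to compare the conjugated variable $z=rw$ (so $w=\rho z$) with $w$ via the discrete product rule, which lets us pass weighted bounds on $D_iz$ to bounds on $rD_iw$ and back. The product rule gives
\begin{equation*}
D_i w=D_i(\rho z)=D_i\rho\,A_iz+A_i\rho\,D_iz ,
\end{equation*}
so that
\begin{equation*}
rD_iw=(rA_i\rho)\,D_iz+(rD_i\rho)\,A_iz .
\end{equation*}
We then invoke the asymptotic expansions of the discrete operators applied to the weights from \cite{AA:perez:2024} (and used in \cite{LPP:2025}), valid under $\tau h\max_{[0,T]}\theta\leq 1$:
\begin{equation*}
rA_i\rho=1+\mathcal{O}_{\lambda}((sh)^2),\qquad rD_i\rho=-s\lambda\xi\,\partial_i\psi+s\mathcal{O}_{\lambda}((sh)^2).
\end{equation*}
The second expansion gives $|rD_i\rho|\leq Cs\lambda\xi+s\mathcal{O}_\lambda((sh)^2)$, and this bounded-factor control is all we need.

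For the first inequality, we square the identity, multiply by $s\xi\lambda^2$ and integrate over $Q_i^\ast$. The $D_iz$ term yields $s\xi\lambda^2|D_iz|^2+s\mathcal{O}_\lambda((sh)^2)|D_iz|^2$. The $A_iz$ term yields $s^3\lambda^4\xi^3|A_iz|^2$ plus an $\mathcal{O}_\lambda((sh)^2)$ remainder. To move this back to the primal mesh, we use $|A_iz|^2\leq A_i|z|^2$ together with integration by parts for the average operator. Since $z=0$ on $\partial Q$, the boundary term vanishes, and the weight picks up $A_i(s^3\xi^3)=s^3\xi^3+s^3\mathcal{O}_\lambda((sh)^2)$. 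These steps produce exactly the listed terms on the right-hand side.

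The second inequality is the same computation at $t=0$ without the $s\xi\lambda^2$ factor. Since $\theta(0)=2$ we have $s(0)=2\tau$, and so $r(0)=e^{2\tau\varphi}$. The squared weights are therefore $r(0)^2=e^{4\tau\varphi}$, and $s(0)^2\lambda^2\xi^2\leq C\tau^2\lambda^2 e^{2\lambda(6m+1)}$ because $\xi\leq e^{\lambda(6m+1)}$. The third inequality is the reverse direction, localized to $G_1\subset\mathcal{O}_1$. Here we write
\begin{equation*}
D_iz=(rA_i\rho)^{-1}\bigl(rD_iw-(rD_i\rho)A_iz\bigr),
\end{equation*}
using that $(rA_i\rho)^{-1}=1+\mathcal{O}_\lambda((sh)^2)$ is bounded for $sh$ small. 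The average $A_iz$ on $G_1\cap\mathcal{M}_i^\ast$ involves nodes in a slightly larger set, and these stay inside $\mathcal{O}_1$ once $h$ is smaller than the distance from $G_1$ to the complement of $\mathcal{O}_1$.

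The main obstacle is keeping every remainder in the $\mathcal{O}_\lambda((sh)^2)$ form uniformly on $[0,T]$ under the new temporal weight. For this we must check that the expansions only use the spatial structure of $\varphi$, with the time dependence entering only through the size of $s$. That is guaranteed by $\tau h\max\theta\leq 1$, which is exactly why $\delta>0$ was introduced so that $\theta$ stays bounded.
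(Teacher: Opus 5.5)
You follow the route the paper has in mind. The paper gives no proof of its own; it refers, as in \cite{LPP:2025}, to the last step of the proof of Theorem 1.4 in \cite{BLR:2014}. That argument is exactly your identity $rD_iw=(rA_i\rho)D_iz+(rD_i\rho)A_iz$, combined with $rA_i\rho=1+\mathcal{O}_\lambda((sh)^2)$, $rD_i\rho=-s\lambda\xi\partial_i\psi+s\mathcal{O}_\lambda((sh)^2)$, $|A_iz|^2\le A_i|z|^2$, and integration by parts for $A_i$.

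Your first and third inequalities go through as you describe, with one small repair in the third. Under the lemma's hypothesis $sh\le 1$ only, not $sh$ small, the expansion alone does not bound $rA_i\rho$ away from $0$. Convexity does: $rA_i\rho=\tfrac12(e^{a}+e^{b})\ge e^{(a+b)/2}\ge e^{-C_\lambda sh^2}$. Alternatively, keep $(1-rA_i\rho)D_iz$ as a remainder, which is where the $s\mathcal{O}_\lambda((sh)^2)|D_iz|^2$ term in the statement comes from.

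The step that does not work as written is the second inequality. Your computation at $t=0$ absorbs $r$ entirely into the coefficients $rA_i\rho$ and $rD_i\rho$. What it actually yields is
\[
\E\int_{\mathcal{M}_{i}^{\ast}}|r(0)D_iw(0)|^2\le C\Bigl(\E\int_{\mathcal{M}_{i}^{\ast}}\bigl(1+\mathcal{O}_\lambda((sh)^2)\bigr)|D_iz(0)|^2+\E\int_{\mathcal{M}}\bigl(\tau^2\lambda^2e^{2\lambda(6m+1)}+\mathcal{O}_\lambda((sh)^2)\bigr)|z(0)|^2\Bigr),
\]
with no factor $e^{4\tau\varphi}$ on the right. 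The factor $r(0)^2=e^{4\tau\varphi}$ you invoke sits on the left, since $|r(0)D_iw(0)|^2=e^{4\tau\varphi}|D_iw(0)|^2$.

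The inequality as printed, with $e^{4\tau\varphi}$ multiplying $|D_iz(0)|^2$ and $|z(0)|^2$, cannot hold with $C$ independent of $\tau$. Take $z(0)$ alternating in sign along $e_i$ at the grid scale, times a smooth bump. Then $A_iz(0)=\mathcal{O}(h)$ and $|D_iz(0)|\sim h^{-1}|z(0)|$. The left side is comparable to $\E\int|D_iz(0)|^2$. The right side is at most $C\bigl(e^{4\tau\max\varphi}+C_\lambda(\tau h)^2\bigr)\E\int|D_iz(0)|^2$ plus lower-order terms, and $e^{4\tau\max\varphi}\le e^{-4\tau(\lambda-1)e^{6\lambda(m+1)}}$, so this ratio tends to zero as $\tau\to\infty$ with $\tau h\to 0$.

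So the $e^{4\tau\varphi}$ factors in the printed statement are a misprint. You should say explicitly that you prove the corrected version above, rather than claim the printed one follows. The corrected version is exactly what is needed to convert the $t=0$ terms $\E\int|D_iz(0)|^2$ and $\E\int\tau^2\lambda_0^3e^{2\lambda(6m+1)}|z(0)|^2$ of \eqref{eq:finalestimateinz} into the terms $e^{4\tau\varphi}|D_iw(0)|^2$ and $\tau^2\lambda_0^3e^{2\lambda_0(6m+1)}e^{4\tau\varphi}|w(0)|^2$ of Lemma~\ref{lem:last}.
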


Moreover, Combining the above lemma with \eqref{eq:finalestimateinz} and noting
\begin{equation}\label{eq:equaldw^2}
    \E\int_{Q}s^2|dz|^2=\E\int_{Q}s^2|rdw|^2=\E\int_{Q}s^2|rg|^2\,dt,
\end{equation}
we conclude the following lemma.
\begin{lemma}\label{lem:last}
    Given any $\lambda>\lambda_0$, exits $\varepsilon_{0}$ and $h_{0}$ sufficiently small, with $0<\varepsilon \leq \varepsilon_0(\lambda_{0})$, and $\tau_{2}\geq \tau_{1}$ sufficiently large, such that for $\tau\geq \tau_{2}$, $0<h\leq h_{0}$, and $\tau h(\max_{[0,T]}\theta)\leq \varepsilon_{0}$, 
\begin{equation*}
\begin{split}  
&\E\int_{Q}s^{3}\lambda_{0}^{4}\xi^{3}e^{2s\varphi}\,|w|^{2}\,dt+ \sum_{i=1}^{n}\E\int_{Q_{i}^{\ast}}s\lambda_{0}^{2}\xi e^{2s\varphi}\,|D_{i}w|^{2}\,dt\\
&+\E\int_{\mathcal{M}}\tau^2\lambda_{0}^3e^{2\lambda_{0}(6m+1)}e^{4\tau\varphi}|w(0)|^2+\sum_{i=1}^{n}\E\int_{\mathcal{M}_{i}^{\ast}}e^{4\tau \varphi(x)}|D_iw(0)|^2\\
&\leq\,\E\int_{0}^T\int_{G_{1}\cap \mathcal{M}}s^{3}\lambda^{4}_{0}\xi^{3}e^{2s\varphi}\,|w|^{2}\,dt+ \sum_{i=1}^{n}\E\int_0^T\int_{G_{1}\cap\mathcal{M}_{i}^{\ast}}s\lambda^{2}_{0}\xi e^{2s\varphi}\,|D_{i}w|^{2}\,dt\\
&+C_{s_0,\varepsilon}\left(\E\int_{Q}e^{2s\varphi}\,|f|^2\,dt+\E\int_{Q}s^2\lambda_{0}^2\xi^2e^{2s\varphi}\,|g|^2\,dt+\frac{1}{h^2}\E\int_{\mathcal{M}}e^{2s(T)\varphi}\,|w(T)|^2\right).
\end{split}
\end{equation*}
\end{lemma}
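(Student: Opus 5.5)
The statement to prove is Lemma~\ref{lem:last}. The plan is to substitute the three inequalities of Lemma~\ref{lem:inequalityDz-Dw} into \eqref{eq:finalestimateinz}, rewrite everything in terms of $w=\rho z$ (so $z=rw=e^{s\varphi}w$), and absorb all $\mathcal{O}_\lambda((sh)^2)$ remainders using $\tau h(\max\theta)\le\varepsilon_0$ and $\tau\ge\tau_2$.

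\textbf{Step 1: zeroth-order terms.} Since $|z|^2=e^{2s\varphi}|w|^2$, the term $\E\int_Q s^3\lambda_0^4\xi^3|z|^2$ in \eqref{eq:finalestimateinz} is exactly $\E\int_Q s^3\lambda_0^4\xi^3 e^{2s\varphi}|w|^2$. The same identity gives the observation term on $G_1$ and the initial term $\tau^2\lambda_0^3e^{2\lambda_0(6m+1)}e^{4\tau\varphi}|w(0)|^2$, using $s(0)=2\tau$. The $f$ and $g$ terms are also immediate: $|rf|^2=e^{2s\varphi}|f|^2$, and \eqref{eq:equaldw^2} turns $\E\int_Q s^2\lambda_0^2\xi^2|dz|^2$ into $\E\int_Q s^2\lambda_0^2\xi^2 e^{2s\varphi}|g|^2$. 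For the terminal term I will use $|z(T)|^2=e^{2s(T)\varphi}|w(T)|^2$.

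\textbf{Step 2: gradient terms.} Here I apply the first inequality of Lemma~\ref{lem:inequalityDz-Dw}. It bounds $\sum_i\E\int_{Q_i^\ast}s\lambda_0^2\xi e^{2s\varphi}|D_iw|^2$ by the left side of \eqref{eq:finalestimateinz} plus errors of the form $\E\int s\,\mathcal{O}_\lambda((sh)^2)|D_iz|^2$ and $\E\int s^3\mathcal{O}_\lambda((sh)^2)|z|^2$.

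Choosing $\varepsilon_0$ small (with $\lambda_0$ fixed), these errors are bounded by half of $\E\int s\lambda_0^2\xi|D_iz|^2+\E\int s^3\lambda_0^4\xi^3|z|^2$. The lower-order piece $\E\int s\,\mathcal{O}_\lambda((sh)^2)|z|^2$ is absorbed by taking $\tau\ge\tau_2$.

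The initial-time gradient term is handled the same way via the second inequality of the lemma: $r(0)=e^{2\tau\varphi}$, and $\mathcal{O}_\lambda((sh)^2)\le C\varepsilon_0^2$ lets the $z(0)$ pieces be absorbed.

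\textbf{Step 3: the observation term, which I expect to be the main obstacle.} The right side of \eqref{eq:finalestimateinz} contains $\sum_i\E\int_0^T\int_{G_1\cap\mathcal{M}_i^\ast}s\lambda_0^2\xi|D_iz|^2$. The target statement keeps this in $w$-form on $G_1$.

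I will use the third inequality of Lemma~\ref{lem:inequalityDz-Dw}, localized to $\mathcal{O}_1=G_1$ or a neighbourhood of it. This converts the term into $\E\int s\lambda_0^2\xi e^{2s\varphi}|D_iw|^2$ plus $\E\int s^3\lambda_0^4\xi^3e^{2s\varphi}|w|^2$ over the localized set, plus $\mathcal{O}_\lambda((sh)^2)$ errors. Those errors are either localized, and so kept on the right, or global, and so absorbed as in Step 2.

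The delicate points are twofold. First, the multiplicative constant produced by Lemma~\ref{lem:inequalityDz-Dw} must not ruin the absorption. Second, the discrete shifts $\tau_{\pm i}$ must keep the localized sets inside $G_1$ for $h<h_0$. If the constants interfere, I would first try enlarging $G_1$ slightly inside $\mathcal{M}_0$, so that the statement's observation sets are a harmless enlargement, with $h_0$ small enough that the mesh neighbourhood of $G_1$ stays inside it.

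\textbf{Step 4: conclusion.} Collecting Steps 1--3, the retained left side matches the statement, with constant $C_{s_0,\varepsilon}$ multiplying the $f$, $g$ and $h^{-2}$ terminal terms. This holds for all $\tau\ge\tau_2$, $0<h\le h_0$ and $\tau h\max\theta\le\varepsilon_0$, which proves the lemma.
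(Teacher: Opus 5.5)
Your proposal is correct and is essentially the paper's own argument. The paper obtains the lemma in one line by combining the three inequalities of Lemma~\ref{lem:inequalityDz-Dw} with \eqref{eq:finalestimateinz} and using $|dz|^2=r^2|g|^2\,dt$ for the $g$-term; your Steps 1--3 just make the substitution $z=e^{s\varphi}w$ and the handling of the $\mathcal{O}_\lambda((sh)^2)$ remainders explicit.

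The points you flag are imprecisions of the paper rather than real obstacles. These are the $\mathcal{O}_1$ versus $G_1$ localization caused by the half-step shifts, and the unit coefficient on the observation terms. To handle them, enlarge $G_1$ slightly inside $G_0$ and carry a generic constant into Lemma~\ref{lem:estimate:termDw}. In addition, the localized gradient remainders $s\,\mathcal{O}_\lambda((sh)^2)|D_iz|^2$ must be absorbed into the global left-hand side, not kept on the right, since they are not of $w$-form.
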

Now, we will need an estimate for the second term on the right-hand side in the previous lemma. For this purpose, we obtain the following Lemma.
\begin{lemma}\label{lem:estimate:termDw}
For $0<\tau h (\max_{[0,T]}\theta)\leq 1$, we have
\begin{align*}
    \sum_{i=1}^{n}&\E\int_0^T\int_{\mathcal{M}_{i}^{\ast}\cap G_{1}}s\lambda^{2}_{0}\xi e^{2s\varphi}\,|D_{i}w|^{2}\,dt\leq\sum_{i=1}^{n}\E\int_{0}^{T}\int_{\mathcal{M}_{i}^\ast\cap G_{1}}s\lambda_0^2\xi e^{2s\varphi}|D_iw|^2\,dt\\
    &\leq C_{s_3,\varepsilon_1}\left(\frac{1}{h}\E\int_{\mathcal{M}_0}\xi e^{2s(T)\varphi}|w(T)|^2+\E\int_0^T\int_{\mathcal{M}_0}s^3\lambda^2\xi^3 e^{2s\varphi}|w|^2\,dt\right.\\
   &+\E\int_{0}^{T}\int_{\mathcal{M}_0}s^2\mathcal{O}_{\lambda}(1)e^{2s\varphi}\,|w|^2\,dt+\E\int_0^T\int_{\mathcal{M}_0}s^2\lambda ^2\xi^2e^{2s \varphi}|w|^2\,dt\\
   &\left.+\E\int_0^T\int_{\mathcal{M}_0}\lambda^{-2}e^{2s \varphi}\,|f|^2\,dt\right)
\end{align*}
\end{lemma}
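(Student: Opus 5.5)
The natural route is the standard gradient-localization argument for Carleman estimates, adapted to the semi-discrete stochastic setting. Take a cutoff $\chi\in C_c^\infty(G_0)$ with $0\le\chi\le 1$ and $\chi\equiv1$ on $G_1$, where $\overline{G_1}\subset G_0$ so that $\chi$ vanishes near the discrete boundary of $\mathcal{M}_0$. Set $\eta:=s\lambda_0^2\xi e^{2s\varphi}\chi$. The goal is to bound
$$\sum_{i}\E\int_0^T\int_{\mathcal{M}_i^\ast}A_i(\eta)\gamma_i|D_iw|^2\,dt,$$
which dominates the left-hand side up to a factor $1+\mathcal{O}_\lambda((sh)^2)$ because $\gamma_i\ge\gamma_0^{-1}$.

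\textbf{Itô's formula.} Apply Itô's formula to $\eta|w|^2$ and integrate over $Q$:
$$d(\eta|w|^2)=\partial_t\eta\,|w|^2dt+2\eta w\,dw+\eta|g|^2dt.$$
Substitute $dw=-\sum_iD_i(\gamma_iD_iw)dt+fdt+gdB$; the martingale term has zero expectation. Use the discrete integration by parts \cite[Lemma 2.2]{LDOP-2021}, with no boundary terms since $\chi$ is compactly supported in $G_0$, together with the discrete product rule $D_i(\eta w)=A_i\eta\,D_iw+D_i\eta\,A_iw$. This gives an identity of the form
$$2\sum_i\E\int_{Q_i^\ast}A_i\eta\,\gamma_i|D_iw|^2=\E\int_{\mathcal{M}}\eta|w|^2\Big|_{t=0}-\E\int_{\mathcal{M}}\eta|w|^2\Big|_{t=T}+\E\int_Q\partial_t\eta|w|^2-2\sum_i\E\int_{Q_i^\ast}\gamma_iD_i\eta\,A_iw\,D_iw-2\E\int_Q\eta fw-\E\int_Q\eta|g|^2.$$
Because the equation is backward, the signs of the terminal terms must be checked carefully. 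The $t=T$ term appears with $e^{2s(T)\varphi}$ and is the source of the $\tfrac1h\E\int_{\mathcal{M}_0}\xi e^{2s(T)\varphi}|w(T)|^2$ contribution. If a $t=0$ term survives with the wrong sign, it should be absorbed using the left-hand side of Lemma~\ref{lem:last}.

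\textbf{Estimating the remaining terms.} The asymptotic weight calculus of \cite{AA:perez:2024}, valid for $sh\le1$, gives
$$|D_i\eta|\le C s^2\lambda_0^3\xi^2e^{2s\varphi}\bigl(1+\mathcal{O}_\lambda(sh)\bigr),\qquad |\partial_t\eta|\le C s^2\mathcal{O}_\lambda(1)e^{2s\varphi},$$
using $|\theta_t|\le C\theta^2$ near $T$ and the bound on $[0,T/4]$.
\begin{itemize}
\item The cross term $D_i\eta\,A_iwD_iw$: apply Young's inequality, putting $\tfrac12A_i\eta|D_iw|^2$ back on the left. The rest is bounded by $s^3\lambda_0^4\xi^3e^{2s\varphi}|A_iw|^2$, which becomes an integral over $\mathcal{M}_0$ after $|A_iw|^2\le A_i|w|^2$ and averaging by parts. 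This produces the $s^3\lambda^2\xi^3$ and $s^2\lambda^2\xi^2$ terms.
\item The term $\eta fw$: Young's inequality gives $\lambda^{-2}e^{2s\varphi}|f|^2+s^2\lambda^6\xi^2e^{2s\varphi}|w|^2$, both supported in $\mathcal{M}_0$.
\item The $g$ term has a favourable sign and can be dropped.
\end{itemize}

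\textbf{Expected obstacle.} The main difficulty is the time-boundary term at $t=T$ and the time derivative of the weight near $t=T$. The factor $1/h$ must be justified. One option is a discrete inverse inequality $|D_iw|^2\le Ch^{-2}(\cdots)$ on a terminal trace term. Alternatively, one can note that $s(T)h\le\varepsilon_0$ makes $s(T)\le \varepsilon_0 h^{-1}$, converting the prefactor $s(T)$ into $h^{-1}$. I would try the second option first. Throughout, I would keep all $\mathcal{O}_\lambda(sh)$ remainders absorbed under $\tau h\max\theta\le1$.
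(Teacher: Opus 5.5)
Your plan is essentially the paper's proof: a cutoff, It\^o's formula for the weighted $|w|^2$, discrete integration by parts, Young's inequality on the cross and source terms, dropping the $g$-term, and turning the terminal factor $s(T)$ into $h^{-1}$ via $s(T)h\le 1$ (your ``second option''). Two small fixes: because the equation is backward, the identity actually reads $2\sum_i\E\int_{Q_i^\ast}A_i\eta\,\gamma_i|D_iw|^2\,dt=\E\int_{\mathcal{M}}\eta|w|^2\big|_{t=T}-\E\int_{\mathcal{M}}\eta|w|^2\big|_{t=0}-\E\int_Q\partial_t\eta\,|w|^2\,dt-\cdots$, so the $t=0$ term has a good sign and is simply dropped with no appeal to Lemma~\ref{lem:last}; and the paper puts $\chi^2$ rather than $\chi$ in the weight, since $|D_i(\chi^2)|^2\le C\,A_i(\chi^2)$ makes the Young absorption of the cross term immediate (with $\chi$ itself you need the Glaeser-type bound $|D_i\chi|^2\le C\,A_i\chi$, valid for nonnegative $C^2$ cutoffs).
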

For a proof, see Appendix \ref{sec:intermediateresult}.

By Lemma \ref{lem:last}-\ref{lem:estimate:termDw} and observing that since $\max_{[0,T]}\theta \leq \frac{1}{\delta T^2(1+\delta)}\leq \frac{1}{T^2 \delta}$, a sufficient condition for $\tau h(\max_{[0,T]}\theta) \leq \varepsilon_{0}$ become $\tau h(T^2\delta)^{-1}\leq \varepsilon_{0}$, we complete the proof of Theorem \ref{theo:Carleman}.
\end{proof}
\section{Technical steps to obtain the estimate for the missing terms of the cross-product}\label{missingterms}

\subsection{Estimate of \texorpdfstring{$I_{11}$.}{}}
We set $\beta_{11}=rD_i^2\rho$. Then,  using $rA_i^2(\rho)=r\rho+\frac{h^2}{2}D^2_i(\rho)$ and noticing that $r\rho=1$, it follows that
\begin{equation*}
\begin{split}
I_{11}=2\E \int_{Q} C_1(z)B_1(z)=& 2\sum_{i=1}^{n}\left(\E \int_{Q} D_i(\gamma_iD_iz)\,dz+\frac{h^2}{4}\E\int_{Q}\beta_{11}\, D_i(\gamma_iD_iz)\,dz\right).
\end{split}
\end{equation*}
Thus, using the definition of $C_4z$ and $B_1z$ on the last integral above, we can rewrite  $I_{11}$ as
\begin{equation}\label{eq:int:I11}
    \begin{split}
  I_{11}=&\sum_{i=1}^{n}\left(-\E\left. \int_{\mathcal{M}_i^*}\gamma_{i}|D_iz|^2\right|_0^T+\E\int_{Q^{\ast}_{i}}\gamma_{i}|D_i(dz)|^2-\frac{h^2}{4}\left[ \left.\E\int_{\mathcal{M}^{\ast}_i}\gamma_i A_i(\beta_{11})|D_iz|^2\right|_0^T \right.\right. \\
  &-\E\int_{Q_{i}^{\ast}}\gamma_i A_i(\beta_{11})|D_i(dz)|^2-\E\int_{Q_{i}^{\ast}}\partial_t(\gamma_i A_i(\beta_{11}))|D_iz|^2dt\\
  &\left.\left.-\left.\E \int_{\mathcal{M}}D_i(\gamma_iD_i(\beta_{11}))|z|^2\right|_0^T+\E \int_{Q}\partial_t(D_i(\gamma_iD_i(\beta_{11})))|z|^2dt\right.\right.\\
  &\left.\left.+\E\int_{Q}D_i[\gamma_iD_i(\beta_{11})]|dz|^2\right]\right)-2\E\int_{Q} C_4z B_1z. 
\end{split}
\end{equation}
Now, we have to estimate the terms with $\beta_{11}=rD_i^2\rho$ on the previous expression, using \cite[Theorem 3.5]{AA:perez:2024} and \cite[Lemma 3.1]{AA:perez:2024} we obtain
\begin{equation}
    \begin{aligned}
     &A_i(\beta_{11})=s^2\xi^2\lambda^2(\partial_{i}\psi)^{2}+2s\mathcal{O}_{\lambda}(1)+s^2\mathcal{O}_{\lambda}((sh)^2)=s^2\mathcal{O}_{\lambda}(1),  \\
     &D_i(\beta_{11})=2s^2\xi^2\lambda^3(\partial_i\psi)^3+s^2\xi^2\lambda^2\mathcal{O}(1)+2s\mathcal{O}_{\lambda}(1)+s^2\mathcal{O}_{\lambda}((sh)^2)=s^2\mathcal{O}_{\lambda}(1),\\
     &\partial_t(\gamma_iA_i(\beta_{11}))=\frac{\theta_t}{\theta}\left(s^2\xi^2\lambda^2(\partial_{i}\psi)^{2}+2s\mathcal{O}_{\lambda}(1)+s^2\mathcal{O}_{\lambda}((sh)^2)\right)=\frac{\theta_t}{\theta}s^2\mathcal{O}_{\lambda}(1)
    \end{aligned}
\end{equation}
where in the last estimate we have used that $\gamma_i$ is time independent. Moreover, noting that $D_i(\gamma_i)=\mathcal{O}(1)$, $A_i(\gamma_i)=\mathcal{O}(1)$ and $D_i(\gamma_iD_i(\beta_{11})=D_i(\gamma_i)A_iD_i(\beta_{11})+A_i(\gamma_i)D^2_i(\beta_{11})$, we then know thanks to Theorem \cite[Theorem 3.5]{AA:perez:2024}
\begin{equation}\label{funtiongammaimport}
\begin{aligned}
&D_i(\gamma_iD_i(\beta_{11}))=s^2\mathcal{O}_{\lambda}(1),\quad\text{and}\quad\partial_t(D_i(\gamma_iD_i(\beta_{11})))=\frac{\theta_t}{\theta}s^2\mathcal{O}_\lambda(1).
\end{aligned}
\end{equation}
Combining these estimates, \eqref{eq:int:I11} and recalling the definition of $I_{41}$, we can write the following equality
\begin{equation*}
    I_{11}+I_{41}=\sum_{i=1}^{n} \E\int_{Q_i^{\ast}}\gamma_i|D_i(dz)|^2-\left.\E\int_{\mathcal{M}_i^{\ast}}\gamma_i|D_iz|^2\right|_0^T+X_{11}-Y_{11},
\end{equation*}
where 
\begin{align*}
    X_{11}:= &\sum_{i=1}^{n}\left(\E\int_{Q_{i}^{\ast}}\mathcal{O}_{\lambda}((sh)^2)|D_i(dz)|^2+\E\int_{Q_{i}^{\ast}}\frac{\theta_t}{\theta}\mathcal{O}_{\lambda}((sh)^2)\,|D_iz|^2dt\right)\\
    &-\E\int_{Q}\frac{\theta_t}{\theta} \mathcal{O}_{\lambda}((sh)^2)|z|^2dt-\E\int_{Q}\mathcal{O}_{\lambda}((sh)^2)\,|dz|^2
\end{align*}
and
\begin{align*}
    Y_{11}:= \sum_{i=1}^{n}\E\left. \int_{\mathcal{M}_i^*}\mathcal{O}_{\lambda}((sh)^2)\,|D_iz|^2\right|_0^T+\E\left.\int_{\mathcal{M}}\mathcal{O}_{\lambda}((sh)^2)\,|z|^2\right|_0^T.
\end{align*}
Finally,  we notice that $|\theta_t(t)|\leq s\mathcal{O}_{\lambda}(1)$ in $t\in [0,T/2]$, $|\theta_{t}(t)|\leq \theta^2$ in $[T/2,T]$ and $|\theta(t)|\leq(\delta T)^{-m}$ in $[T/2,T]$, so
\begin{align*}
    X_{11}\geq &\sum_{i=1}^{n}\left(\E\int_{Q_{i}^{\ast}}\mathcal{O}_{\lambda}((sh)^2)|D_i(dz)|^2+\E\int_{Q_{i}^{\ast}}s\mathcal{O}_{\lambda}((sh)^2)\,|D_iz|^2dt\right)\\
    &-\E\int_{Q}s\mathcal{O}_{\lambda}((sh)^2)|z|^2dt-\E\int_{Q}\mathcal{O}_{\lambda}((sh)^2)\,|dz|^2.
\end{align*}
\subsection{Estimate of \texorpdfstring{$I_{21}$}{}}
We set $\beta_{21}=\gamma_irD_i^2\rho$. From the definition of $I_{21}$ and using $A_i^2(z)=z^2+\frac{h^2}{2}D_i^2z$, it follows that
\begin{equation*}
\begin{split}
I_{21}=2\E \int_{Q} C_2(z)\,B_1(z)=& 2\sum_{i=1}^{n}\left(\E \int_{Q} \beta_{21}\; zdz+\frac{h^2}{4}\E\int_{Q}\beta_{21}\, D^2_iz\,dz\right).
\end{split}
\end{equation*}
Therefore, using the definition of $C_4(z)$, $B_1(z)$ on the last integral above and thanks to $I_{21}^{(a)}$ and $I_{21}^{(a)}$, we can rewrite $I_{11}$ as
\begin{equation}
    \begin{split}
        &I_{21}=-2\E\int_{Q} C_5(z)\, B_1(z)+\sum_{i=1}^{n}\left(\left.\E\int_{\mathcal{M}}\beta_{21}  |z|^2\right|_0^T-\E\int_{Q}\partial_t(\beta_{21}   )|z|^2dt\right.\\
        &- \E\int_{Q}\beta_{21}  |dz|^2-\frac{h^2}{4}\left[ \left.\E\int_{\mathcal{M}^{\ast}_i}A_i(\beta_{21})|D_iz|^2\right|_0^T\right.\left.-\E\int_{Q_{i}^{\ast}} A_i(\beta_{21})|D_i(dz)|^2\right.\\
        &\left.-\E\int_{Q_{i}^{\ast}}\partial_t(A_i(\beta_{21}))|D_iz|^2dt-\left.\E \int_{\mathcal{M}}D^2_i(\beta_{21}))|z|^2\right|_0^T\left.+\E \int_{Q}\partial_t(D^2_i(\beta_{21}))|z|^2dt\right.\right.\\
        &\left.\left.+\E\int_{Q}D^2_i(\beta_{21})|dz|^2\right]\right).
    \end{split}
\end{equation}
The result follows using \cite[Theorem 3.5]{AA:perez:2024} and \cite[Lemma 3.1]{AA:perez:2024} in terms with $\beta_{21}=\gamma_irD_i^2\rho$. In fact, 
\begin{equation}\label{eq:estimate:I21a}
    \begin{aligned}   \beta_{21}&=\gamma_is^2\lambda^2\xi^2(\partial_i\psi)^2+2s\mathcal{O}_{\lambda}(1)+s^2\mathcal{O}_{\lambda}((sh)^2)=s^2\mathcal{O}_{\lambda}(1),\\
        \partial_{t}(\beta_{21})&=\frac{\theta_t}{\theta}(\gamma_is^2\lambda^2\xi^2(\partial_i\psi)^2+2s\mathcal{O}_{\lambda}(1)+s^2\mathcal{O}_{\lambda}((sh)^2).
    \end{aligned}
\end{equation}
Now, applying product rule with respect to average operator and, we obtain the following result
\begin{equation*}
A_i(\beta_{21})=A_i(\gamma_i)A_i(rD_i^2\rho)+\frac{h^2}{4}D_i(\gamma_i)D_i(rD_i^2\rho)\\
\end{equation*}
and by product rule with respect to differential operator we have
\begin{equation*}D_i^2(\beta_{21})=A_i^2(\gamma_i)D_i^2(rD_i^2\rho)+2A_iD_i(\gamma_i)D_iA_i(rD_i^2\rho)+D_i^2(\gamma_i)A_i(rD_i^2\rho).
\end{equation*}
Consider that $A^\alpha_i(\gamma_i)=D_i^{\alpha}(\gamma_i)=A_iD_i(\gamma_i)=\mathcal{O}(1)$ for $\alpha=1,2$, and repeated application of \cite[Theorem 3.5]{AA:perez:2024} and \cite[Lemma 3.1]{AA:perez:2024}, we obtain the following estimates
\begin{equation}\label{eq:estimate:I21b}
    \begin{aligned}
&A_i(\beta_{21})=s^2\lambda^2\xi^2\mathcal{O}(1)+2s\mathcal{O}_{\lambda}(1)+s^2\mathcal{O}_{\lambda}((sh)^2),\\
&D_i^2(\beta_{21})=s^2\mathcal{O}_{\lambda}(1),\quad\text{and}\quad\partial_t(A_i(\beta_{21}))=\partial_t(D_i^2(\beta_{21}))=\frac{\theta_t}{\theta}s^2\theta\mathcal{O}_{\lambda}(1).
    \end{aligned}
\end{equation}
Combining the definition of $I_{51}$, \eqref{eq:estimate:I21a}, and \eqref{eq:estimate:I21b}, we deduce
\begin{equation*}
    I_{21}+I_{51}=\left.\E\int_{\mathcal{M}}s^2\lambda^2\xi^2|\nabla_\gamma\psi|^2|z|^2\right|_0^T-\E\int_{\mathcal{Q}}s^2\lambda^2\xi^2|\nabla_{\gamma}\psi|^2|dz|^2+X_{21}-Y_{21},
\end{equation*}
where
\begin{equation*}
    \begin{split}
        &X_{21}\\
        &:=\; -\E\int_{Q}\frac{\theta_t}{\theta} \left[s^2\lambda^2\xi^2|\nabla_{\gamma}\psi|^2+2s\mathcal{O}_{\lambda}(1)+\mathcal{O}_{\lambda}((sh)^2)\right]|z|^2dt+\E\int_{Q}\mathcal{O}_{\lambda}((sh)^2)|dz|^2\\
        &+\sum_{i=1}^{n}\left(\E\int_{Q_{i}^{\ast}}\mathcal{O}_{\lambda}((sh)^2)|D_i(dz)|^2+\E\int_{Q_i^{\ast}}\frac{\theta_t}{\theta}\mathcal{O}_{\lambda}((sh)^2)|D_iz|^2dt\right)
    \end{split}
\end{equation*}
and
\begin{equation*}
\begin{split}
Y_{21}:=&\;\left.\E\int_{\mathcal{M}}\mathcal{O}_\lambda((sh)^2)|z|^2\right|_0^T+\left.\sum_{i=1}^{n}\E\int_{\mathcal{M}_{i}^{\ast}}\mathcal{O}_{\lambda}((sh)^2)|D_iz|^2\right|_0^T.
\end{split}
\end{equation*}
Since there exists $\lambda_1$ such that for $\lambda >\lambda_1$, we have $|\theta_{t}(t)|\leq \tau \xi$ for $t\in [0,T/2]$. In addition, $|\theta_{t}(t)|\leq \theta^2(t)$ in $[T/2,T]$ and $|\theta(t)|\leq (\delta T)^{-m}$ in $[T/2,T]$. Thus, we obtain
\begin{align*}
    &X_{21}\\
    &\geq\; -\E\int_{Q} \left[s^3\lambda^2\xi^3|\nabla_{\gamma}\psi|^2+2s^2\mathcal{O}_{\lambda}(1)+s\mathcal{O}_{\lambda}((sh)^2)\right]|z|^2dt+\E\int_{Q}\mathcal{O}_{\lambda}((sh)^2)|dz|^2\\
        &+\sum_{i=1}^{n}\left(\E\int_{Q_{i}^{\ast}}\mathcal{O}_{\lambda}((sh)^2)|D_i(dz)|^2+\E\int_{Q_i^{\ast}}s\mathcal{O}_{\lambda}((sh)^2)|D_iz|^2dt\right).
\end{align*}
\subsection{Estimate of \texorpdfstring{$I_{31}$}{}}
\begin{align*}
I_{31}=&\left.\E\int_{\mathcal{M}}r\partial_{t}(\rho)\,  |z|^2\right|_0^T-\E\int_{Q}\partial_t(r\partial_{t}(\rho))\,|z|^2dt- \E\int_{Q}r\partial_{t}(\rho)\, |dz|^2\\
=&\left.-\E \int_{\mathcal{M}}\frac{\theta_t}{\theta}s\varphi|z|^2\right|_0^T+\E\int_{Q}\frac{\theta_{tt}}{\theta}s\varphi|z|^2\,dt+\E\int_{Q}\frac{\theta_t}{\theta}s\varphi|dz|^2.
\end{align*}
From the definition of $\theta$ and $\varphi$, we can see that $\theta_t(T)=m(\delta T)^{-m-1}$ and 
$$
\tau\theta_t(0)\varphi=4T^{-1}\tau^2\lambda^2e^{\lambda(6m-4)}(\lambda e^{6\lambda (m+1)}-e^{\lambda(\psi(x)+6m)})>C\tau^2\lambda^3e^{2\lambda(6m+1) }
$$
for $\lambda>1$ and $C$ is uniform with respect to $T$. Moreover, taking into account the properties of the temporal weight function commented at the beginning of the section and noting that there exists $\lambda_2>1$ so that $\lambda\geq \lambda_2$, we have $\lambda^2e^{-2\lambda}\leq 1$, we can obtain the following result for $t\in [0,T/2)$
\begin{align*}
\frac{|\theta_t\varphi|}{\theta}s&\leq \tau\sigma \lambda e^{6(m+1)}=\tau^2\lambda^3e^{\lambda(12m+2)}=s^2\mathcal{O}_{\lambda}(1),\\
    \frac{|\theta_{tt}\varphi|}{\theta}s\leq& \tau \sigma^2\lambda e^{6\lambda(m+1)}=\tau^3\lambda^5e^{\lambda(18m-2)}\leq \tau^3\lambda^3e^{18\lambda m}\leq s^3\lambda^3\xi^3
\end{align*}
and for $t\in [T/2,T]$, we have $|\theta_t\varphi|\leq\theta^2 \mathcal{O}_{\lambda}(1) $ and $|\theta_{tt}\varphi|\leq \theta^3\mathcal{O}_{\lambda}(1)$. Therefore, 
\begin{align*}  I_{31}>&C\E\int_{\mathcal{M}}\tau^2\lambda^3e^{2\lambda(6m+1)}|z(0)|^2-\E\int_{\mathcal{M}}\tau \lambda e^{6\lambda(m+1)}|z(T)|^2\\
&-\E\int_{Q}s^3\lambda^3\xi^3\mathcal{O}(1)|z|^2\,dt-\E\int_{\mathcal{M}}s^2\xi\varphi |dz|^2.
\end{align*}
\subsection{Estimate of \texorpdfstring{$I_{32}$}{}}
Denoting $\beta^{i}_{32}:= \gamma_i r^2\partial_t(\rho)D_iA_i\rho$, we have
\begin{align*}
    2\E\int_{Q}C_{3}(z)B_{2}(z)\,dt=2\sum_{i=1}^{n}\E\int_{Q} \beta^{i}_{32}zD_iA_iz\,dt:= \sum_{i=1}^{n}I^{i}_{32}.
\end{align*}
Firstly, using integration by parts with respect to the operator differential and product rule, we obtain 
\begin{align*}
    I^{i}_{32}=&-2\E\int_{Q_i^{\ast}}D_i(\beta_{32}^{i} z)A_iz\,dt+2\E\int_{\partial_i Q}\beta_{32}^{i}z \,t_{r}^{i}(A_iz)\nu_{i}\,dt\\
    =&-2\E\int_{Q_i^{\ast}}D_i(\beta_{32}^{i}) |A_iz|^2\,dt- 2\E\int_{Q_i^{\ast}}A_i(\beta_{32}^{i})D_i(z)A_iz\,dt
\end{align*}
where we utilized $z=0$ on $\partial_i Q$. Now, using the identity $2D_i(z)A_iz = D_i(z^2)$ and applying the integration by parts with respect to the differential operator once again to the second integral in the above equation, we deduce that

\begin{align*}
    I_{32}^{i}=&-2\E\int_{Q_i^{\ast}}D_i(\beta_{32}^{i}) |A_iz|^2\,dt-\E\int_{Q_i^{\ast}}A_i(\beta_{32}^{i})D_i(|z|^2)\,dt\\
    =& -2\E\int_{Q_i^{\ast}}D_i(\beta_{32}^{i}) |A_iz|^2\,dt+\E\int_{Q}D_iA_i(\beta_{32}^{i})\,|z|^2\,dt-\E\int_{\partial_iQ}t_{r}^{i}(A_i\beta_{32}^{i})\,|z|^2\nu_i\,dt\\
    =&-2\E\int_{Q_i^{\ast}}D_i(\beta_{32}^{i}) |A_iz|^2\,dt+\E\int_{Q}D_iA_i(\beta_{32}^{i})\,|z|^2\,dt
\end{align*}
where condition $z=0$ on $\partial_i Q$ has been applied in the last line. 

On the other hand, by product rule with respect to differential operator and the results \cite{AA:perez:2024}, we obtain the following equalities:
\begin{align*}
    |D_i\beta_{32}^{i}|=|A_iD_i\beta_{32}^{i}|=\frac{|\theta_t|}{\theta}s^2|(\lambda^2\xi^2+\lambda\varphi\xi-\varphi\xi)\mathcal{O}(1)+\mathcal{O}_{\lambda}((sh)^2)|,
\end{align*}
where we consider $D_i\gamma_i=A_i\gamma_i=A_i^2\gamma_i=A_iD_i\gamma_i=\mathcal{O}(1)$. Now, considering there exist $\lambda_1$ such that $\lambda>\lambda_1$, it follows $|\theta_t(t)|\leq C\tau \xi $ in $[0,T/2]$, in addition, $|\theta_{t}|\leq\theta^2$ in $[T/2,T]$ and $|\theta|\leq (\delta T)^{-m}$, therefore,
\begin{align*}
    I_{32}:=&\sum_{i=1}^{n}I_{32}^{i}\\
    &\geq -\mathbb{E}\int_{Q_i^{\ast}}s^3|\lambda^2\xi^3+\lambda\varphi\xi^2-\varphi\xi^2||\mathcal{O}(1)||A_iz|^2\,dt-\mathbb{E}\int_{Q_i^{\ast}}s|\mathcal{O}_\lambda((sh)^2)||A_iz|^2\,dt\\
    &-\E\int_{Q}s^3|\lambda^2\xi^3+\lambda\varphi\xi^2-\varphi\xi^2||\mathcal{O}(1)||z|^2\,dt-\E\int_{Q}s|\mathcal{O}_{\lambda}((sh)^2)||z|^2\,dt
\end{align*}
Finally, using $|A_iz|^2\leq A_i|z|^2$, integration by parts with respect to average operator and condition $z=0$ on $\partial_iQ$, we have
\begin{align*}
      I_{32}:=\sum_{i=1}^{n}I_{32}^{i}\geq-\E\int_{Q}s^3|\lambda^2\xi^3+\lambda\varphi\xi^2-\varphi\xi^2||\mathcal{O}(1)||z|^2\,dt-\E\int_{Q}s|\mathcal{O}_{\lambda}((sh)^2)||z|^2\,dt.
\end{align*}
\subsection{Estimate of \texorpdfstring{$I_{33}$}{}}
Denoting $\beta_{22}:=\,-2sr\partial_t(\varphi)\Delta_{\gamma}(\varphi)$,  we have \\$\displaystyle
        2\E \int_{Q} C_{3}zB_{3}zdt=2\E\int_{Q}\beta_{22}\,|z|^2dt:=\,I_{33}$.
Consider $|\theta_t|\leq s\mathcal{O}_\lambda(1)$ in $[0,T/2]$, $\theta_t(t)\leq \theta^2(t)$ in $[T/2,T]$, $|\theta(t)|\leq (\delta T)^{-m}$ and observe that
$\Delta_{\gamma}(\varphi)=\lambda^2\xi|\nabla_{\gamma}\psi|^2+\lambda \xi \mathcal{O}(1)$, thus
\begin{equation}\label{eq:estimatefinalI33}
    I_{33}\geq \E\int_{Q}s^2\mathcal{O}_{\lambda}(1)|z|^2\,dt.
\end{equation}
\section{Technical steps to obtain the intermediate result}\label{sec:intermediateresult}

This section is dedicated to the proof of the Lemma \ref{lem:estimate:termDw}. We choose a function $\chi\in C_0^{\infty}(G_0;[0,1])$ such that $\chi\equiv 1$ in $G_1$ . By the Itô formula, we see that
\begin{equation*}
    \begin{split}
        d(\chi^2s\xi e^{2s\varphi}|w|^2)=&\chi^2\xi\partial_t(se^{2s\varphi})\,|w|^2\,dt+2\chi^2s\xi e^{2s\varphi}wdw+\chi^2s\lambda^2_0\xi e^{2s\varphi}|dw|^2\\
        =&\chi^2\xi e^{2s\varphi}\left[\frac{\theta_t}{\theta}(s+2s^2\varphi)|w|^2\,dt+2swdw+s|dw|^2\right].
    \end{split}
\end{equation*}
Taking into account that $z$ satisfies $\displaystyle dw=\left(-\sum_{i=1}^{n}D_i(\gamma_iD_iw)+f\right)\,dt+g\,dB(t)$, we have
\begin{equation}\label{eq1:AppendixC}
\begin{split}
    &\E\int_{\mathcal{M}_0}\chi^2(\delta T)^{-m}\tau \xi e^{2(\delta T)^{-m}\tau \varphi}|w(T)|^2-2\E\int_{\mathcal{M}_0}\chi^2 \tau \xi e^{4\tau\varphi}|w(0)|^2=\\
    &\E\int_0^T\int_{\mathcal{M}_0}\frac{\theta_t}{\theta}\chi^2\xi e^{2s\varphi}(s+2s^2\varphi)|w|^2\,dt+2\E\int_{0}^T\int_{\mathcal{M}_0}\chi^2\xi e^{2s\varphi}sfw\,dt\\
    &-2\sum_{i=1}^{n}\E\int_0^T\int_{\mathcal{M}_0}\chi^2\xi e^{2s\varphi}sD_i(\gamma_iD_iw)w\,dt +\E\int_{0}^T\int_{\mathcal{M}_0}\chi^2\xi e^{2s\varphi}s|g|^2\,dt,
\end{split}
\end{equation}
where we used $\E\,B(t)=0$ and $|dw|^2=g^2\,dt$. Using the integration by parts on the second integral on the right-hand side in the above equation, we can see that for each $i=1,\ldots,n$
\begin{align*}
    -2\E\int_{0}^T\int_{\mathcal{M}_0}\chi^2\xi e^{2s\varphi}sD_i(\gamma_iD_iw)&w\,dt
    =2\E\int_{0}^T\int_{\mathcal{M}_0}\chi^2\xi e^{2s\varphi}sD_i(\gamma_iD_iw)w\,dt
    \\
    &-\E\int_{0}^T\int_{\partial_i(\mathcal{M}_0)}\chi^2\xi se^{2s\varphi}t_r^{i}(\gamma_iD_iw)w\nu_i\,dt\\
    =&2\E\int_0^T\int_{(\mathcal{M}_0)_{i}^{*}}D_i(\chi^2\xi e^{2s\varphi}w)s\gamma_iD_iw\,dt,
\end{align*}
where we used $\chi=0$ on $\partial \mathcal{O}_0$. Now, by product rule, we obtain
\begin{align}
-2\E\int_{0}^T\int_{\mathcal{M}_0}\chi^2\xi e^{2s\varphi}sD_i(\gamma_i&D_iw)w\,dt=2\E\int_{0}^T\int_{(\mathcal{M}_0)_i^{*}}A_i(\chi^2\xi e^{2s\varphi})s\gamma_i|D_iw|^2\,dt\nonumber \\
&+2\E\int_{0}^T\int_{(\mathcal{M}_0)_i^{*}}D_i(\chi^2\xi e^{2s\varphi})s\gamma_iA_i(w)D_i(w)\,dt.\label{eq3:appendiC}
\end{align}
By results in \cite{AA:perez:2024}, we have $$D_i(\chi^2\xi e^{2s\theta\varphi})=\partial_i(\chi^2\xi e^{2s\varphi})+h\mathcal{O}_{\lambda}(sh)e^{2s\varphi}=s\lambda\mathcal{O}(1)\chi  \xi^2e^{2s\theta\varphi}+h\mathcal{O}_{\lambda}(sh)e^{2s\varphi},$$ then for each $i=1,...,n$ on the second integral of the above equation we have
\begin{equation}\label{eq5:lemma4.7}
    \begin{split}
        2&\E\int_0^T\int_{(\mathcal{M}_0)_{i}^{*}}s\gamma_iD_i(\chi^2\xi  e^{2s\varphi})A_iw\,D_iw\,dt\\
        =\,&\E\int_0^T\int_{(\mathcal{M}_0)_i^{\ast}}\left[s^2\lambda\mathcal{O}(1)\chi^2 \xi^2e^{2s\varphi}+\mathcal{O}_{\lambda}((sh)^2)e^{2s\theta\varphi}\right]A_iw\,D_iw\,dt\\
        \leq\, & \E\int_0^T\int_{(\mathcal{M}_0)_i^{\ast}}\chi^2s\xi e^{2s\varphi}|D_iz|^2\,dt+C\E\int_0^T\int_{(\mathcal{M}_0)_i^{\ast}}s^3\lambda^2\xi^3 e^{2s\varphi}|A_iw|^2\,dt\\
        &+\E\int_0^T\int_{(\mathcal{M}_0)_i^{\ast}}\mathcal{O}_{\lambda}((sh)^2)e^{2s\varphi}|D_iz|^2\,dt+\E\int_0^T\int_{\mathcal{M}_{i}^{\ast}\cap\mathcal{O}_{1}}\mathcal{O}_{\lambda}((sh)^2)e^{2s\varphi}|A_iw|^2\,dt
    \end{split}
\end{equation}
where Young's inequality was used. Noting that \\$A_i(\varphi ^3e^{2s\theta\varphi})=\varphi ^3e^{2s\theta\varphi}+\mathcal{O}_{\lambda}((sh)^2)e^{2s\theta\varphi}$, $A_i(e^{2s\theta\varphi})=\mathcal{O}_{\lambda}(1)e^{2s\theta\varphi}$, $|A_iw|^2\leq A_i|w|^2$, and integration by parts with respect to average operator, we have the following
\begin{equation}\label{eq6:lemma4.7}
    \begin{split}
\E\int_0^T&\int_{(\mathcal{M}_0)_i^{\ast}}s^3\lambda^2\xi^3 e^{2s\varphi}|A_iw|^2\,dt+\E\int_0^T\int_{(\mathcal{M}_0)_i^{\ast}}\mathcal{O}_{\lambda}((sh)^2)e^{2s\varphi}|A_iw|^2\,dt\\
        \leq&  \E\int_0^T\int_{\mathcal{M}_0}s^3\lambda^2\xi^3 e^{2s\varphi}|w|^2\,dt+\E\int_0^T\int_{\mathcal{M}_0}\mathcal{O}_{\lambda}((sh)^2)e^{2s\varphi}|w|^2\,dt.
    \end{split}
\end{equation}
Hence, combining  \eqref{eq5:lemma4.7} and \eqref{eq6:lemma4.7}, the last term on the left-hand side of \eqref{eq3:appendiC} can be estimated by
\begin{equation}\label{eq7:lemma4.7}
    \begin{split}
        2&\E\int_0^T\int_{(\mathcal{M}_0)_i^{\ast}}s\gamma_iD_i(\chi^2 \xi e^{2s\varphi})A_iw\,D_iw\,dt\\
        \leq\, & \E\int_0^T\int_{(\mathcal{M}_0)_i^{\ast}}\chi^2s\xi e^{2s\varphi}|D_iz|^2\,dt+C\E\int_0^T\int_{\mathcal{M}_0}s^3\lambda^2\xi^3 e^{2s\varphi}|w|^2\,dt\\
        &+\E\int_0^T\int_{(\mathcal{M}_0)_i^{\ast}}\mathcal{O}_{\lambda}((sh)^2)e^{2s\varphi}|D_iz|^2\,dt+\E\int_0^T\int_{\mathcal{M}_0}\mathcal{O}_{\lambda}((sh)^2)e^{2s\varphi}|w|^2\,dt.
    \end{split}
\end{equation}
Therefore, from the equation above with \eqref{eq3:appendiC}, we can see that
\begin{align*}
    2\E&\int_{0}^T\int_{(\mathcal{M}_0)_i^{*}}A_i(\chi^2\xi e^{2s\varphi})s\gamma_i|D_iw|^2\,dt\leq -2\E\int_{0}^T\int_{\mathcal{M}_0}\chi^2\xi e^{2s\varphi}sD_i(\gamma_iD_iw)w\,dt\\
    &+\E\int_0^T\int_{(\mathcal{M}_0)_i^{\ast}}\chi^2s\xi e^{2s\varphi}|D_iz|^2\,dt+C\E\int_0^T\int_{\mathcal{M}_0}s^3\lambda^2\xi^3 e^{2s\varphi}|w|^2\,dt\\
&+\E\int_0^T\int_{(\mathcal{M}_0)_i^{\ast}}\mathcal{O}_{\lambda}((sh)^2)e^{2s\varphi}|D_iz|^2\,dt+\E\int_0^T\int_{\mathcal{M}_0}\mathcal{O}_{\lambda}((sh)^2)e^{2s\varphi}|w|^2\,dt.
\end{align*}
Moreover, taking into account that $e^{-2s\varphi}A_i(\chi^2\xi e^{2s\varphi})=\chi^2\xi +\mathcal{O}((sh)^2)$, we have the following
\begin{align*}
    \E&\int_{0}^T\int_{(\mathcal{M}_0)_i^{*}}\chi^2\gamma_i s\xi e^{2s\varphi}|D_iw|^2\,dt\leq -2\E\int_{0}^T\int_{\mathcal{M}_0}\chi^2\xi e^{2s\varphi}sD_i(\gamma_iD_iw)w\,dt\\
    &+C\E\int_0^T\int_{\mathcal{M}_0}s^3\lambda^2\xi^3 e^{2s\varphi}|w|^2\,dt+\E\int_0^T\int_{(\mathcal{M}_0)_i^{\ast}}\mathcal{O}_{\lambda}((sh)^2)e^{2s\varphi}|D_iz|^2\,dt\\
&+\E\int_0^T\int_{\mathcal{M}_0}\mathcal{O}_{\lambda}((sh)^2)e^{2s\varphi}|w|^2\,dt.
\end{align*}
Combining the above equation with \eqref{eq1:AppendixC}, we obtain
\begin{align*}
   &\sum_{i=1}^{n}\E\int_{0}^{T}\int_{\mathcal{M}_{i}^\ast\cap  G_1}\gamma_is\xi e^{2s\varphi}|D_iw|^2\,dt\leq \E\int_{\mathcal{M}}\chi^2(\delta T)^{-m}\tau \xi e^{2(\delta T)^{-m}\tau \varphi}|w(T)|^2\\
   &-2\E\int_{\mathcal{M}}\chi^2 \tau \xi e^{4\tau\varphi}|w(0)|^2+C\E\int_0^T\int_{\mathcal{M}_0}s^3\lambda^2\xi^3 e^{2s\varphi}|w|^2\,dt\\
   &+\E\int_0^T\int_{(\mathcal{M}_0)_i^{\ast}}\mathcal{O}_{\lambda}((sh)^2)e^{2s\varphi}|D_iz|^2\,dt+\E\int_0^T\int_{\mathcal{M}_0}\mathcal{O}_{\lambda}((sh)^2)e^{2s\varphi}|w|^2\,dt\\
   &-\E\int_{Q}\frac{\theta_t}{\theta}\chi^2\xi e^{2s\varphi}(s+2s^2\varphi)\,|w|^2\,dt-2\E\int_{Q}\chi^2\xi e^{2s\varphi}sf w\,dt-E\int_{Q}\chi^2\xi e^{2s\varphi}s|g|^2\,dt
\end{align*}
Thus, by Cauchy-Schwarz on the seventh integral in the above equation, we have
\begin{align*}
   \sum_{i=1}^{n}&\E\int_{0}^{T}\int_{\mathcal{M}_{i}^\ast\cap  G_1}\gamma_is\xi e^{2s\varphi}|D_iw|^2\,dt\leq \E\int_{\mathcal{M}}\chi^2(\delta T)^{-m}\tau \xi e^{2(\delta T)^{-m}\tau \varphi}|w(T)|^2\\
   &+C\E\int_0^T\int_{\mathcal{M}_0}s^3\lambda^2\xi^3 e^{2s\varphi}|w|^2\,dt+\E\int_0^T\int_{(\mathcal{M}_0)_i^{\ast}}\mathcal{O}_{\lambda}((sh)^2)e^{2s\varphi}|D_iz|^2\,dt\\
   &+\E\int_0^T\int_{\mathcal{M}_0}\mathcal{O}_{\lambda}((sh)^2)e^{2s\varphi}|w|^2\,dt-\E\int_{Q}\frac{\theta_t}{\theta}\chi^2\xi e^{2s\varphi}(s+2s^2\varphi)\,|w|^2\,dt\\
   &+C \E\int_0^T\int_{\mathcal{M}_0}s^2\lambda ^2\xi^2e^{2s \varphi}|w|^2\,dt+C\E\int_0^T\int_{\mathcal{M}_0}\lambda^{-2}e^{2s \varphi}\,|f|^2\,dt
\end{align*}
Now, we focus on the fifth integral on the right-hand side in the above inequality. From the definition of $\theta$ and $\varphi$, we can notice that $-\theta_t\varphi\leq 0$ or $|\theta_t|\leq s\mathcal{O}_\lambda(1)$ on $[0,T/2$] and for $[T/2,T]$, $|\theta_t|\leq |\theta(t)|^2$. This allows us to conclude,
\begin{align*}
    -&\E\int_{Q}\frac{\theta_t}{\theta}\chi^2\xi e^{2s\varphi}(s+2s^2\varphi)\,|w|^2\,dt\\
    &=-\E\int_{0}^{T/2}\int_{\mathcal{M}}\frac{\theta_t}{\theta}\chi^2\xi e^{2s\varphi}(s+2s^2\varphi)\,|w|^2\,dt-\int_{T/2}^T\int_{\mathcal{M}}\frac{\theta_t}{\theta}\chi^2\xi e^{2s\varphi}(s+2s^2\varphi)\,|w|^2\,dt\\
    \leq &\E\int_{0}^{T/2}\int_{\mathcal{M}} |\theta(t)|\xi e^{2s\varphi}s^2\mathcal{O}_{\lambda}(1)\,|w|^2\,dt+2\int_{T/2}^{T}\int_{\mathcal{M}}|\theta(t)|\chi^2\xi e^{2s\varphi}s^2|\varphi|\,|w|^2\,dt\\
    &\leq \E\int_0^Ts^2\mathcal{O}_{\lambda}(1)\,|w|^2\,dt.
\end{align*}
Therefore, taking $h\tau (\delta T)^{-m}\leq \varepsilon_1$, we obtain the desired result.


\bibliographystyle{abbrv}
\bibliography{references}

@article{DT:2000,
 author = {de Teresa, Luz},
 title = {Insensitizing controls for a semilinear heat equation},
 fjournal = {Communications in Partial Differential Equations},
 journal = {Commun. Partial Differ. Equations},
 issn = {0360-5302},
 volume = {25},
 number = {1-2},
 pages = {39--72},
 year = {2000},
 language = {English},
 doi = {10.1080/03605300008821507},
 zbMATH = {1423428},
 Zbl = {0942.35028}
}

@article {ABM:2018,
    AUTHOR = {Allonsius, Damien and Boyer, Franck and Morancey, Morgan},
     TITLE = {Spectral analysis of discrete elliptic operators and
              applications in control theory},
   JOURNAL = {Numer. Math.},
  FJOURNAL = {Numerische Mathematik},
    VOLUME = {140},
      YEAR = {2018},
    NUMBER = {4},
     PAGES = {857--911},
      ISSN = {0029-599X,0945-3245},
   MRCLASS = {34L16 (35K10 65M06 93B05 93B40)},
  MRNUMBER = {3864704},
MRREVIEWER = {Can\ Zhang},
       DOI = {10.1007/s00211-018-0983-1},
       URL = {},
}

@article {BDK:2026,
    AUTHOR = {Bhandari, Kuntal and Dutta, Rajib and Kumar, Manish},
     TITLE = {Carleman estimate and controllability of a time-discrete
              coupled parabolic system},
   JOURNAL = {Math. Control Relat. Fields},
  FJOURNAL = {Mathematical Control and Related Fields},
    VOLUME = {16},
      YEAR = {2026},
     PAGES = {229--278},
      ISSN = {2156-8472,2156-8499},
   MRCLASS = {93B05 (35K51 65M06 93-08)},
  MRNUMBER = {4974918},
       DOI = {10.3934/mcrf.2025016},
       URL = {},
}

@article {AB:2020,
    AUTHOR = {Allonsius, Damien and Boyer, Franck},
     TITLE = {Boundary null-controllability of semi-discrete coupled
              parabolic systems in some multi-dimensional geometries},
   JOURNAL = {Math. Control Relat. Fields},
  FJOURNAL = {Mathematical Control and Related Fields},
    VOLUME = {10},
      YEAR = {2020},
    NUMBER = {2},
     PAGES = {217--256},
      ISSN = {2156-8472,2156-8499},
   MRCLASS = {93B05 (35K51 65M06 93-08 93C20)},
  MRNUMBER = {4097728},
MRREVIEWER = {Can\ Zhang},
       DOI = {10.3934/mcrf.2019037},
       URL = {},
}

@article {LT:2006,
    AUTHOR = {Labb\'e, St\'ephane and Tr\'elat, Emmanuel},
     TITLE = {Uniform controllability of semidiscrete approximations of
              parabolic control systems},
   JOURNAL = {Systems Control Lett.},
  FJOURNAL = {Systems \& Control Letters},
    VOLUME = {55},
      YEAR = {2006},
    NUMBER = {7},
     PAGES = {597--609},
      ISSN = {0167-6911,1872-7956},
   MRCLASS = {93B05 (35K05 93C25)},
  MRNUMBER = {2225370},
MRREVIEWER = {Pham Tran Nhu},
       DOI = {10.1016/j.sysconle.2006.01.004},
       URL = {},
}

@article{LPP:2026,
      title={Inverse random source and {C}auchy problems for semi-discrete stochastic parabolic equations in arbitrary Dimensions}, 
    journal = {\href{https://arxiv.org/abs/2509.03760}{arXiv:2509.03760}},
      author={Lecaros, Rodrigo and P\'erez, Ariel A. and Prado, Manuel F.},
      year={2026},
    eprint={2412.19892},
      archivePrefix={arXiv},
      primaryClass={},
      url={}, 
}

@article{AA:perez:2024,
      title={Asymptotic behavior of {C}arleman weight functions}, 
    journal = {\href{https://arxiv.org/abs/2412.19892}{arXiv:2412.19892}},
      author={Ariel A. P\'erez},
      year={2024},
    eprint={2412.19892},
      archivePrefix={arXiv},
      primaryClass={},
      url={}, 
}

@article {HS:LB:P-2023,
    AUTHOR = {V\'ictor Hern\'andez-Santamar\'ia and Le Balc'h, K\'evin and
              Peralta, Liliana},
     TITLE = {Global null-controllability for stochastic semilinear
              parabolic equations},
   JOURNAL = {Ann. Inst. H. Poincar\'e{} C Anal. Non Lin\'eaire},
  FJOURNAL = {Annales de l'Institut Henri Poincar\'e{} C. Analyse Non
              Lin\'eaire},
    VOLUME = {40},
      YEAR = {2023},
    NUMBER = {6},
     PAGES = {1415--1455},
      ISSN = {0294-1449,1873-1430},
   MRCLASS = {93B05 (35K55 60H15 93C20 93E03)},
  MRNUMBER = {4656420},
MRREVIEWER = {Can\ Zhang},
       DOI = {10.4171/aihpc/69},
       URL = {},
}

@article {HS2023,
    AUTHOR = {Hern\'andez-Santamar\'ia, V\'ictor},
     TITLE = {Controllability of a simplified time-discrete stabilized
              {K}uramoto-{S}ivashinsky system},
   JOURNAL = {Evol. Equ. Control Theory},
  FJOURNAL = {Evolution Equations and Control Theory},
    VOLUME = {12},
      YEAR = {2023},
    NUMBER = {2},
     PAGES = {459--501},
      ISSN = {2163-2472,2163-2480},
   MRCLASS = {93B05 (35K52 93C55)},
  MRNUMBER = {4520409},
MRREVIEWER = {V\~u{} Ngoc Ph\'at},
       DOI = {10.3934/eect.2022038},
       URL = {},
}

@article {BHSDT:2019,
    AUTHOR = {Boyer, Franck and Hern\'andez-Santamar\'ia, V\'ictor and de
              Teresa, Luz},
     TITLE = {Insensitizing controls for a semilinear parabolic equation: a
              numerical approach},
   JOURNAL = {Math. Control Relat. Fields},
  FJOURNAL = {Mathematical Control and Related Fields},
    VOLUME = {9},
      YEAR = {2019},
    NUMBER = {1},
     PAGES = {117--158},
      ISSN = {2156-8472,2156-8499},
   MRCLASS = {93C20 (35K20 35K91 65M06 93B05)},
  MRNUMBER = {3924861},
MRREVIEWER = {Ren\'e\ D\'ager},
       DOI = {10.3934/mcrf.2019007},
       URL = {},
}

@article {WZ:2025,
    AUTHOR = {Wang, Yu and Zhao, Qingmei},
     TITLE = {The {\it {$\phi $}}-null controllability for semi-discrete
              stochastic semilinear parabolic equations},
   JOURNAL = {ESAIM Control Optim. Calc. Var.},
  FJOURNAL = {ESAIM. Control, Optimisation and Calculus of Variations},
    VOLUME = {31},
      YEAR = {2025},
     PAGES = {Paper No. 98},
      ISSN = {1292-8119,1262-3377},
   MRCLASS = {93B05 (35K10 35R60 60H15 93C20 93E03)},
  MRNUMBER = {4998008},
       DOI = {10.1051/cocv/2025082},
       URL = {},
}

@article{zhao:2024,
author = {Zhao, Qingmei},
title = {Null Controllability for Stochastic Semidiscrete Parabolic Equations},
journal = {SIAM Journal on Control and Optimization},
volume = {63},
number = {3},
pages = {2007-2028},
year = {2025},
doi = {10.1137/24M1639166},
}

@article{WZ:2024,
      title={Null controllability for stochastic fourth order semi-discrete parabolic equations}, 
    journal = {\href{https://arxiv.org/abs/2405.03257}{arXiv:2405.03257}},
      author={Yu Wang and Qingmei Zhao},
      year={2024},
      archivePrefix={arXiv},
      primaryClass={math.OC},
      url={}, 
}

@article {LMPZ-2023,
    AUTHOR = {Lecaros, Rodrigo and Morales, Roberto and P\'{e}rez, Ariel and
              Zamorano, Sebasti\'{a}n},
     TITLE = {Discrete {C}arleman estimates and application to
              controllability for a fully-discrete parabolic operator with
              dynamic boundary conditions},
   JOURNAL = {J. Differential Equations},
  FJOURNAL = {Journal of Differential Equations},
    VOLUME = {365},
      YEAR = {2023},
     PAGES = {832--881},
      ISSN = {0022-0396,1090-2732},
   MRCLASS = {93B05 (65M06 93B07)},
  MRNUMBER = {4593117},
MRREVIEWER = {Salah-Eddine\ Chorfi},
       DOI = {10.1016/j.jde.2023.05.014},
       URL = {},
}

@article{CLTP-2022,
AUTHOR = {Cerpa, Eduardo and Lecaros, Rodrigo and Nguyen, Thuy N. T. and
              P\'{e}rez, Ariel},
     TITLE = {Carleman estimates and controllability for a semi-discrete
              fourth-order parabolic equation},
   JOURNAL = {J. Math. Pures Appl. (9)},
  FJOURNAL = {Journal de Math\'{e}matiques Pures et Appliqu\'{e}es. Neuvi\`eme S\'{e}rie},
    VOLUME = {164},
      YEAR = {2022},
     PAGES = {93--130},
      ISSN = {0021-7824},
   MRCLASS = {35K52 (65M06 93B05 93B07)},
  MRNUMBER = {4450878},
       DOI = {10.1016/j.matpur.2022.06.003},
       URL = {},
}

@article {LDOP-2021,
    AUTHOR = {Lecaros, Rodrigo and Ortega, Jaime H. and P\'{e}rez, Ariel and De
              Teresa, Luz},
     TITLE = {Discrete {C}alder\'{o}n problem with partial data},
   JOURNAL = {Inverse Problems},
  FJOURNAL = {Inverse Problems. An International Journal on the Theory and
              Practice of Inverse Problems, Inverse Methods and Computerized
              Inversion of Data},
    VOLUME = {39},
      YEAR = {2023},
    NUMBER = {3},
     PAGES = {Paper No. 035001, 28},
      ISSN = {0266-5611},
   MRCLASS = {42B37 (35R30 42B10 65M06)},
  MRNUMBER = {4541733},
       }

@article {BHLR:2010a,
    AUTHOR = {Boyer, Franck and Hubert, Florence and Le Rousseau, J\'{e}r\^{o}me},
     TITLE = {Discrete {C}arleman estimates for elliptic operators and
              uniform controllability of semi-discretized parabolic
              equations},
   JOURNAL = {J. Math. Pures Appl. (9)},
  FJOURNAL = {Journal de Math\'{e}matiques Pures et Appliqu\'{e}es. Neuvi\`eme S\'{e}rie},
    VOLUME = {93},
      YEAR = {2010},
    NUMBER = {3},
     PAGES = {240--276},
      ISSN = {0021-7824},
   MRCLASS = {93B05 (35K10 65M06 93C20)},
  MRNUMBER = {2601332},
       DOI = {10.1016/j.matpur.2009.11.003},
       URL = {},
}

@article {BLR:2014,
    AUTHOR = {Boyer, Franck and Le Rousseau, J\'{e}r\^{o}me},
     TITLE = {Carleman estimates for semi-discrete parabolic operators and
              application to the controllability of semi-linear
              semi-discrete parabolic equations},
   JOURNAL = {Ann. Inst. H. Poincar\'{e} Anal. Non Lin\'{e}aire},
  FJOURNAL = {Annales de l'Institut Henri Poincar\'{e}. Analyse Non Lin\'{e}aire},
    VOLUME = {31},
      YEAR = {2014},
    NUMBER = {5},
     PAGES = {1035--1078},
      ISSN = {0294-1449},
   MRCLASS = {35K58 (35B45 93B05)},
  MRNUMBER = {3258365},
MRREVIEWER = {Joseph L. Shomberg},
       DOI = {10.1016/j.anihpc.2013.07.011},
       URL = {},
}

@article {BHLR:2010b,
    AUTHOR = {Boyer, Franck and Hubert, Florence and Le Rousseau, J\'{e}r\^{o}me},
     TITLE = {Discrete {C}arleman estimates for elliptic operators in
              arbitrary dimension and applications},
   JOURNAL = {SIAM J. Control Optim.},
  FJOURNAL = {SIAM Journal on Control and Optimization},
    VOLUME = {48},
      YEAR = {2010},
    NUMBER = {8},
     PAGES = {5357--5397},
      ISSN = {0363-0129},
   MRCLASS = {35K20 (35P15 65M06 93B05 93B07)},
  MRNUMBER = {2745778},
       DOI = {10.1137/100784278},
       URL = {},
}

@article{lopez1998some,
  title={Some new results related to the null controllability of the $1-d $ heat equation},
  author={L{\'o}pez, Antonio and Zuazua, Enrique},
  journal={S{\'e}minaire {\'E}quations aux d{\'e}riv{\'e}es partielles (Polytechnique)},
  pages={1--22},
  year={1998}
}

@article{zuazua2005propagation,
  title={Propagation, observation, and control of waves approximated by finite difference methods},
  author={Zuazua, Enrique},
  journal={SIAM review},
  volume={47},
  number={2},
  pages={197--243},
  year={2005},
  publisher={SIAM}
}

@book{fursikov-1996,
  author    = {Fursikov, A. V. and Imanuvilov, O. Yu.},
  title     = {Controllability of Evolution Equations},
  series    = {Lecture Notes Series},
  volume    = {34},
  publisher = {Seoul National University, Research Institute of Mathematics, Global Analysis Research Center},
  address   = {Seoul, South Korea},
  year      = {1996},
  pages     = {iv+163},
  mrclass   = {93-02 (35B37 35Q30 93B05 93C20)},
  mrnumber  = {1406566},
  mrreviewer= {Vilmos Komornik}
}

@article {GC-HS-2021,
    AUTHOR = {Gonz\'{a}lez Casanova, Pedro and Hern\'{a}ndez-Santamar\'{\i}a, V\'{\i}ctor},
     TITLE = {Carleman estimates and controllability results for fully
              discrete approximations of 1{D} parabolic equations},
   JOURNAL = {Adv. Comput. Math.},
  FJOURNAL = {Advances in Computational Mathematics},
    VOLUME = {47},
      YEAR = {2021},
    NUMBER = {5},
     PAGES = {Paper No. 72, 71},
      ISSN = {1019-7168},
   MRCLASS = {93B05 (65M06)},
  MRNUMBER = {4314112},
MRREVIEWER = {Can Zhang},
       DOI = {10.1007/s10444-021-09885-4},
       URL = {},
}

@article {BHLR:2011,
    AUTHOR = {Boyer, F. and Hubert, F. and Le Rousseau, J.},
     TITLE = {Uniform controllability properties for space/time-discretized
              parabolic equations},
   JOURNAL = {Numer. Math.},
  FJOURNAL = {Numerische Mathematik},
    VOLUME = {118},
      YEAR = {2011},
    NUMBER = {4},
     PAGES = {601--661},
      ISSN = {0029-599X},
   MRCLASS = {93B40 (35K20 65M06 93B05 93B07)},
  MRNUMBER = {2822494},
MRREVIEWER = {Wenming Bian},
       DOI = {10.1007/s00211-011-0368-1},
       URL = {},
}

@article{Thuy:2014,
title = {Carleman estimates for semi-discrete parabolic operators with a discontinuous diffusion coefficient and applications to controllability},
journal = {Mathematical Control and Related Fields},
volume = {4},
number = {2},
pages = {203-259},
year = {2014},
issn = {2156-8472},
doi = {10.3934/mcrf.2014.4.203},
url = {},
author = {Thuy N. T. Nguyen}
}

@article {Thuy:2015,
    AUTHOR = {Nguyen, Thuy N. T.},
     TITLE = {Uniform controllability of semidiscrete approximations for
              parabolic systems in {B}anach spaces},
   JOURNAL = {Discrete Contin. Dyn. Syst. Ser. B},
  FJOURNAL = {Discrete and Continuous Dynamical Systems. Series B. A Journal
              Bridging Mathematics and Sciences},
    VOLUME = {20},
      YEAR = {2015},
    NUMBER = {2},
     PAGES = {613--640},
      ISSN = {1531-3492,1553-524X},
   MRCLASS = {93B05 (65J10 93C20)},
  MRNUMBER = {3331671},
MRREVIEWER = {Zhongcheng\ Zhou},
       DOI = {10.3934/dcdsb.2015.20.613},
       URL = {},
}

@incollection {B:2013,
    AUTHOR = {Boyer, F.},
     TITLE = {On the penalised {HUM} approach and its applications to the
              numerical approximation of null-controls for parabolic
              problems},
 BOOKTITLE = {C{ANUM} 2012, 41e {C}ongr\`es {N}ational d'{A}nalyse
              {N}um\'{e}rique},
    SERIES = {ESAIM Proc.},
    VOLUME = {41},
     PAGES = {15--58},
 PUBLISHER = {EDP Sci., Les Ulis},
      YEAR = {2013},
   MRCLASS = {49M25 (65J05)},
  MRNUMBER = {3174955},
MRREVIEWER = {Songting Luo},
       DOI = {10.1051/proc/201341002},
       URL = {},
}

@article{TZ-2009,
author = {Tang, Shanjian and Zhang, Xu},
title = {Null Controllability for Forward and Backward Stochastic Parabolic Equations},
journal = {SIAM Journal on Control and Optimization},
volume = {48},
number = {4},
pages = {2191-2216},
year = {2009},
doi = {10.1137/050641508},
URL = {https://doi.org/10.1137/050641508}}

@misc{ZXL:2025,
           title={New global {C}arleman estimates and null controllability for forward/backward semi-linear parabolic SPDEs}, 
      author={Lei Zhang and Fan Xu and Bin Liu},
      year={2025},
      eprint={2401.13455},
      archivePrefix={arXiv},
      primaryClass={math.OC},
      url={https://arxiv.org/abs/2401.13455}, 
}

@article {LPP:2025,
    AUTHOR = {Lecaros, Rodrigo and P\'erez, Ariel A. and Prado, Manuel F.},
     TITLE = {Carleman {E}stimate for {S}emi-discrete {S}tochastic
              {P}arabolic {O}perators in {A}rbitrary {D}imension and
              {A}pplications to {C}ontrollability},
   JOURNAL = {Appl. Math. Optim.},
  FJOURNAL = {Applied Mathematics and Optimization},
    VOLUME = {93},
      YEAR = {2026},
    NUMBER = {1},
     PAGES = {Paper No. 12},
      ISSN = {0095-4616,1432-0606},
   MRCLASS = {93B05 (65M06 93B07 93C20)},
  MRNUMBER = {5006586},
       DOI = {10.1007/s00245-025-10364-1},
       URL = {},
}
\end{document}